\newtheorem{theorem}{Theorem}[section]
\newtheorem{proposition}[theorem]{Proposition}
\newtheorem{lemma}[theorem]{Lemma}
\newtheorem{corollary}[theorem]{Corollary}
\newtheorem{conjecture}[theorem]{Conjecture}
\newtheorem{conjectureo}[theorem]{Optimistic Conjecture}
\DeclareMathOperator{\Vol}{Vol}
\DeclareMathOperator{\M}{\mathbf{M}}
\newcommand{\R}{\mathbb{R}}
\newcommand{\Z}{\mathbb{Z}}
\newcommand{\N}{\mathbb{N}}
\newcommand{\Om}{\Omega}
\newcommand{\Si}{\Sigma}
\newcommand{\Zk}{\mathcal{Z}_k(M, \partial M; G)}
\newcommand{\ZkPL}{\mathcal{Z}_k^{PL}(M, \partial M; G)}
\newcommand{\Zone}{\mathcal{Z}_1}
\newcommand{\F}{\mathcal{F}}
\newcommand{\eps}{\varepsilon}
\theoremstyle{definition}
\newtheorem{remark}[theorem]{Remark}
\title{Parametric inequalities and Weyl law for the volume spectrum}
\author{Larry Guth and 
Yevgeny Liokumovich}
\begin{document}

\maketitle

\begin{abstract}
We show that the Weyl law for the volume spectrum in a compact 
Riemannian manifold 
conjectured by Gromov can be derived from parametric generalizations of two
famous inequalities: isoperimetric inequality and coarea inequality.
We prove two such generalizations in low dimensions and obtain the Weyl law for 1-cycles in 3-manifolds. We also give a new proof of the 
Almgren isomorphism theorem.
\end{abstract}

\section{Introduction}
Consider the space of $k$-dimensional Lipschitz cycles in a Riemannian manifold $M$
with coefficients in an abelian group $G$.
One can define the flat semi-norm $\mathcal{F}(x)$ on this space 
as the infimum of volumes of Lipschitz chains with boundary $x$;
define flat distance as $\mathcal{F}(x,y) = \mathcal{F}(x-y)$
(\cite{Flem}, \cite[Appendix 1]{Gu2}).
If we consider the quotient of this space
by the equivalence relation $x\sim y$ if $\mathcal{F}(x,y)=0$ and then take its completion,
then we obtain the space of flat k-cycles $\mathcal{Z}_k(M;G)$
with flat distance $\mathcal{F}$. This is a natural space to consider 
when trying to find submanifolds that solve a certain calculus of variations
problem (like minimal surfaces) without a priori specifying the topological type
of the solution. 

In the 1960's Almgren initiated a program of developing Morse theory
on the space $\mathcal{Z}_k(M;G)$. It follows from Almgren's work
that there is a non-trivial cohomology class $\alpha \in H^{n-k}(\mathcal{Z}_k(M;\Z_2);\Z_2)$ corresponding to a sweepout of $M$ by
an $(n-k)$-parameter family of $k$-cycles $F: X \rightarrow \mathcal{Z}_k(M;\Z_2)$
with $F^*(\alpha) \neq 0 \in H^{n-k}(X;\Z_2)$. Moreover, all cup powers $\alpha^p$
are also non-trivial and the corresponding families of cycles we call $p$-sweepouts.
Corresponding to each $p$ we can define the $p$-width
$$\omega_p^k(M) = \inf\{\sup_{x \in X} Vol_k(F(x)): F\text{ is a p-sweepout} \}$$

The $p$-widths correspond to volumes of certain generalized minimal submanifolds
that arise via a min-max argument. In dimension $k=n-1$, $3 \leq n \leq 7$, they are
smooth minimal hypersurfaces and for $k = 1$ they are stationary geodesic nets. 
The study of $p$-widths led
to many remarkable breakthroughs in recent years, including the resolution
of the Willmore Conjecture \cite{MNWil} and Yau's conjecture on existence
of infinitely many minimal surfaces \cite{IMN}, \cite{So}. (See also \cite{zh19}, \cite{ChMa}, \cite{MN19}
and references therein).

Gromov suggested to view these widths, or ``volume spectrum'', as
non-linear analogs of the eigenvalues of the Laplacian \cite{Gr88}, \cite{Gr03}, \cite{Gr09}. 
This framework is very useful for obtaining new results about widths and 
some other non-linear min-max geometric quantities (see, for example, \cite{GALio},
\cite{lio2016}, \cite{mazurowski}).
In line with this 
analogy Gromov conjectured that $\omega_p^k$'s satisfy an asymptotic Weyl law:
\begin{equation} \label{weyl_law_general}
    \lim_{p \rightarrow \infty} \frac{\omega_p^k(M)}{p^{\frac{n-k}{n}}}= a(n,k) \Vol(M)^{\frac{k}{n}}
\end{equation}
For $k=n-1$ this was proved in \cite{LMN}. 

In this paper we show that for $k<n-1$ Gromov's conjecture can be reduced to proving
parametric versions of two famous inequalities in geometry:
the isoperimetric inequality and coarea inequality. We prove these
parametric inequalities in low dimensions and as a consequence obtain
the Weyl law for 1-cycles in 3-manifolds.

\begin{theorem} \label{weyl3}
For every compact Riemannian 3-manifold $M$
$$\lim_{p \rightarrow \infty} \frac{\omega_p^1(M)}{p^{\frac{2}{3}}}= a(3,1) \Vol(M)^{\frac{1}{n}}$$
\end{theorem}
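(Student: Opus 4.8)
The plan is to establish separately the upper bound $\limsup_{p\to\infty}\omega_p^1(M)/p^{2/3}\le a(3,1)\Vol(M)^{1/3}$ and the matching lower bound, where the constant is defined intrinsically by the model problem $a(3,1):=\lim_{p\to\infty}\omega_p^1([0,1]^3)/p^{2/3}$ for the Euclidean unit cube. Both halves are localization arguments, in the spirit of the proof of the Weyl law for the Laplace eigenvalues and of the hypersurface case of \cite{LMN}: one cuts $M$ into many small pieces, each bi-Lipschitz with distortion $1+o(1)$ to a rescaled Euclidean cube; runs the model estimate on the pieces; and reassembles the resulting families of $1$-cycles while keeping track of both the length of the cycles and the number of parameters. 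The two reassembly operations are exactly two parametric inequalities proved (in low dimensions) in this paper: a parametric isoperimetric inequality (filling families of $1$-cycles by families of $2$-chains, continuously in the parameter, with the Euclidean exponent $3/2$) and a parametric coarea inequality (slicing families of $2$-chains into families of $1$-cycles, with control by the Lipschitz constant of the slicing function). In the ambient dimension $n=3$ these involve only $1$- and $2$-dimensional objects, which is what makes their proofs available, and I take those proofs as inputs.

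\emph{Upper bound.} First, subdividing $[0,1]^3$ into $m^3$ congruent subcubes, using $\omega_q^1(s\cdot[0,1]^3)=s\,\omega_q^1([0,1]^3)$, and welding the $m^3$ families by the parametric coarea inequality gives a subadditivity relation for $\omega^1_\bullet([0,1]^3)$ between scales $1$ and $1/m$; a Fekete-type argument then shows the limit defining $a(3,1)$ exists and is finite and positive. Now fix a large $N$ and decompose $M=\Omega_1\cup\dots\cup\Omega_N$ with disjoint interiors, each $\Omega_i$ bi-Lipschitz with distortion $1+o_N(1)$ to a cube of volume $\Vol(M)/N$. On each $\Omega_i$ the model estimate supplies, for every $p$, a $p$-sweepout by relative $1$-cycles of maximal length at most $a(3,1)(\Vol(M)/N)^{1/3}p^{2/3}+o_p(p^{2/3})$; using a Morse function on $M$ adapted to the decomposition one welds these $N$ families into a $q$-sweepout of $M$ with $q\ge Np-CN$, coning off the parts of the cycles lying near $\partial\Omega_i$ by continuous families of $2$-chains whose length contribution is bounded, via the parametric coarea inequality, by $o_p(p^{2/3})+o_N(p^{2/3})$; the cup-product structure of $\alpha$ guarantees the welded family detects $\alpha^q$. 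Since $\sum_{i=1}^N a(3,1)(\Vol(M)/N)^{1/3}p^{2/3}=a(3,1)\Vol(M)^{1/3}(Np)^{2/3}$, dividing by $q^{2/3}$ and letting $p\to\infty$ and then $N\to\infty$ yields the upper bound.

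\emph{Lower bound.} This is the technical heart. Given any $p$-sweepout $\Phi\colon X\to\mathcal{Z}_1(M;\Z_2)$ with $\sup_x\Vol_1(\Phi(x))=V$ and a decomposition $M=\Omega_1\cup\dots\cup\Omega_N$ as above, for almost every choice of the $\Omega_i$ the slices of $\Phi(x)$ in each $\Omega_i$ are relative $1$-cycles in $\mathcal{Z}_1(\overline{\Omega_i},\partial\Omega_i;\Z_2)$ depending continuously on $x$; applying the parametric isoperimetric inequality in neighbourhoods of the interfaces one deforms $\Phi$, losing only $o_p(p^{2/3})$ in length, so that its restriction to each $\Omega_i$ is a well-defined family of relative cycles. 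A Lusternik--Schnirelmann cup-length argument — based on the fact that $\alpha$ pulls back to the generator under each restriction $\mathcal{Z}_1(M;\Z_2)\to\mathcal{Z}_1(\overline{\Omega_i},\partial\Omega_i;\Z_2)$, so that a closed cover of $X$ by the loci where these restricted families are locally trivial cannot have small multiplicity — controls how the nonzero cup power $\alpha^p$ must be distributed over the pieces; feeding in the model estimates on the $\Omega_i$ then forces $V\ge a(3,1)\Vol(M)^{1/3}p^{2/3}(1-o_p(1)-o_N(1))$, after which $p\to\infty$ and $N\to\infty$ give the bound. Extracting the sharp constant here (rather than merely a constant multiple of it) is the most delicate point, and follows the scheme of \cite{LMN}.

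\emph{Main obstacle.} Granting the localization framework, the crux is the parametric isoperimetric inequality in dimensions $(n,k)=(3,1)$: one must produce, for an arbitrary finite-dimensional family $\{z_x\}$ of $1$-cycles that bound, a \emph{continuous} family of fillings $\{A_x\}$ with $\partial A_x=z_x$ and $\Vol_2(A_x)\le C\,\Vol_1(z_x)^{3/2}$, with no jumps in the flat topology. A pointwise choice of area-minimizing filling is not continuous, so the family must be built by induction over the skeleta of the parameter complex, re-filling over each new cell while controlling the area inherited from the boundary of that cell; keeping the bookkeeping compatible with the Euclidean exponent $3/2$ is the heart of the matter, and is exactly where the low ambient dimension is used. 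Subsidiary difficulties are: checking that every error term is genuinely $o(p^{2/3})$, uniformly as the mesh $1/N$ of the decomposition shrinks; the measure-theoretic care needed to restrict flat $1$-cycles to the subdomains (slicing by $\partial\Omega_i$ for a.e.\ choice of the $\Omega_i$) and to run the Lusternik--Schnirelmann count with the correct constants; and proving the parametric coarea inequality for $2$-chains in a $3$-manifold with sharp dependence on the Lipschitz constant of the slicing function. With both parametric inequalities in hand in dimensions $(3,1)$, Theorem~\ref{weyl3} follows by specializing the general reduction to $n=3$, $k=1$.
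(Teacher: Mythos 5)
Your broad ``localize, run the model estimate, reassemble'' outline is the right spirit, but the technical skeleton does not match the paper and has concrete errors, the most important of which is that you have misidentified the parametric isoperimetric inequality that the argument actually needs. The paper's reassembly step never fills families of $1$-cycles in a $3$-domain by families of $2$-chains. Instead it first applies the parametric coarea inequality (Theorem~\ref{coarea}, $k=1$, $n=3$) to each $\Phi_m$ and to the already-assembled $F_{m-1}$, pushing both to a slightly retracted domain with controlled boundary mass $\lesssim p^{(n-k)/(n-1)}$. The boundaries $\partial\Phi_m'$, $\partial F_{m-1}'$ restricted to the interface $S=\partial V_{m-1}\cap\partial U_m$ then form a contractible family of \emph{$0$-cycles} in a \emph{$2$-dimensional surface} $S$, and it is to this family that the parametric isoperimetric inequality is applied (Theorem~\ref{isoperimetric}/\ref{thm: isoperimetric0}, $k=0$, $n=2$, a quantitative Dold--Thom statement). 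The glue $\Psi_S$ it produces is a family of $1$-chains \emph{in $S$}. In the general reduction (Theorem~\ref{weyl general}), the inputs are ``coarea for $k$-cycles in $n$-domains'' plus ``isoperimetric for $(k-1)$-cycles in $(n-1)$-domains,'' and the paper explicitly remarks that the $(k,n)=(1,3)$ isoperimetric inequality can be derived from these but is not used. Your claimed ``main obstacle'' — a continuous family of $2$-chains in $\R^3$ filling $1$-cycles — is therefore something the paper deliberately avoids having to construct.

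Your exponent is also wrong: for filling $k$-cycles of large mass in a small $n$-cube the paper uses the codimension-dependent bound $(\ref{FFinsmall})$, $\M(\tau)\lesssim\M(z)^{(n-k-1)/(n-k)}$, which is $\M(z)^{1/2}$ for $(n,k)=(2,0)$; the $3/2$ you quote is the Federer--Fleming exponent $(k+1)/k$ for $k=2$ and has no role here. Two further structural mismatches: the paper obtains the lower bound $a_1\ge a(n,k)$ by citing \cite[Theorem 4.1]{LMN} rather than re-proving it via a Lusternik--Schnirelmann argument (so that whole half of your proposal is addressing a problem the paper imports), and for the upper bound the paper does not multiply $N$ separate $p$-sweepouts into a $q\ge Np-CN$ sweepout; it takes a single $p$-sweepout of the disjoint union $V\subset\R^n$ of bi-Lipschitz images of the simplices (using the already-proved Weyl law for Euclidean domains from \cite{LMN}), restricts it to each $\tilde U_i$, and welds those restrictions back into a single $p$-sweepout of $M$, keeping the same parameter complex $X^p$ throughout. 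That last point is not cosmetic: the inductive estimate $(\ref{eq: weyl inductive step})$ keeps the number of parameters fixed at $p$, which is what lets the interface-error terms $p^{(n-k-1)/(n-1)}$ and $\M\cdot p^{-1/(n-1)}$ be absorbed after dividing by $p^{(n-k)/n}$; your $q\approx Np$ bookkeeping would need a separate argument to recover the sharp constant.
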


We do not know the value of constant $a(3,1)$.
In \cite{ChMa2} Chodosh and Mantoulidis proved
that optimal sweepouts of the round 2-sphere
are realized by zero sets of homogeneous polynomials and
that $a(2,1) = \sqrt{\pi}$, but for all other values of
$n>k \geq 1$ constants $a(n,k)$ remain unknown.

\subsection{Some applications of the Weyl law}
In \cite{IMN} Irie, Marques and Neves used the Weyl law to 
prove that for a generic Riemannian metric on an n-dimensional 
manifold, $3 \leq n \leq 7$, the union of embedded minimal hypersurfaces forms a 
dense set. Marques, Neves and Song proved a stronger equidistribution 
property of minimal hypersurfaces in \cite{MNS}; their proof is based on 
the idea that one can ``differentiate'' both sides of (\ref{weyl_law_general})
with respect to some cleverly chosen variations of metric. 
Similar results were proved by Gaspar and Guaraco using the Weyl law in the Allen-Cahn setting \cite{GaGu} (see also \cite{dey}). 
Li used the Weyl law
to prove existence of infinitely many 
minimal hypersurfaces for generic metrics in higher dimensions, $n>7$ \cite{li2019}.
In \cite{So} Song proved some Weyl law-type
asymptotic estimates for certain 
non-compact manifolds and used them to prove that in dimensions $3 \leq n \leq 7$ for all Riemannian metrics on a compact  manifold (not only generic ones) there exist infinitely many
minimal hypersurfaces (see also \cite{SoZh} where similar generalizations
were applied to prove a ``scarring'' result for minimal hypersurfaces).

In \cite{So19} Song showed that the density result for generic metrics 
can be obtained without the full strength
 of the Weyl law. This observation was used in \cite{LiokSt} (together with 
 a bumpy metrics theorem for stationary geodesic nets proved in \cite{St}) 
 to prove that
 for a generic Riemannian metric on a compact manifold $M^n$, $n \geq 2$, stationary geodesic nets form a dense set, even though the Weyl law 
 for 1-cycles in dimensions $n \geq 4$ is not known. In \cite{LiSt} Li and 
 Staffa used Theorem \ref{weyl3} to prove an equidistribution result for stationary geodesic nets in 3-manifolds, analogous to that of 
 \cite{MNS} for minimal hypersurfaces.

\subsection{Parametric isoperimetric inequality}
The isoperimetric inequality of Federer-Fleming
asserts that given a $k$-dimensional Lipschitz
cycle $z$ in $\R^n$, there exists a $(k+1)$-dimensional
Lipschitz chain $\tau$, such that $\partial \tau = z$ and
\begin{equation} \label{FF}
    Vol_{k+1}(\tau) \leq c(n) \Vol_{k}(z)^{\frac{k+1}{k}}
\end{equation}
This inequality is useful when $z$ lies in a ball of large radius.
If $z \subset B_R$ with $R \leq \Vol_{k}(z)^\frac{1}{k}$,
then the inequality simply follows by taking a cone over $z$.

In this paper we will be interested in the situation 
when a mod 2 cycle $z$ is contained in a small cube (say, of side length $1$)
and has very large volume. 
In this case, we can subdivide the cube into smaller cubes 
of side length $\Vol(z)^{-\frac{1}{n-k}}$. Applying the Federer-Fleming
deformation in this lattice we can find a ``deformation chain'' that pushes
$z$ into
the $k$-dimensional skeleton of this lattice. 
Since the total volume of the
$(k+1)$-skeleton of the lattice
is $\sim \big(\frac{1}{\Vol(z)^{\frac{1}{n-k}}}\big)^{k+1}\Vol(z)^{\frac{n}{n-k}}$ we can then obtain a filling $\tau$ of $z$ with
\begin{equation} \label{FFinsmall}
    \Vol_{k+1}(\tau) \leq c(n)  \Vol_{k}(z)^{\frac{n-k-1}{n-k}}
\end{equation}
Note that when $\Vol(z)>>1$ this is a much better bound
than (\ref{FF}) or the cone inequality.

We would like to prove a parametric version of inequality (\ref{FFinsmall}),
namely, given a family of relative cycles $F:X \rightarrow \mathcal{Z}_k([0,1]^n, \partial [0,1]^n;G)$ we would like to find a continuous family 
of $(k+1)$-chains $H:X \rightarrow I_{k+1}([0,1]^n;G)$
with $\partial H(x) - F(x)$ in $\partial [0,1]^n$ and so that the mass of $H(x)$
is controlled in terms of the mass of $F(x)$ as in (\ref{FFinsmall}).

First, observe that we need to assume that map $F$ is contractible, 
since otherwise a continuous family of fillings $H$
does not exist. Secondly, existence of a family of fillings
with controlled mass depends on the choice of coefficients $G$.
In Section \ref{sec:Almgren} we show that for every integer $N$ there exists
a contractible 1-parameter family of $0$-cycles in $\mathcal{Z}_0(S^1, \Z)$
of mass $\leq 2$, such that every continuous family of fillings
must contain a chain of mass $>N$. This example
can be generalized to higher dimensions and codimensions. 

However, for $G = \Z_2$ we can optimistically conjecture:

\begin{conjectureo}
Let $F:X \rightarrow \mathcal{Z}_k([0,1]^n, \partial [0,1]^n;\Z_2)$
be a contractible family. Then there exists a family of $(k+1)$-chains
$H:X \rightarrow I_{k+1}([0,1]^n;\Z_2)$, such that
$\partial H(x) - F(x)$ is supported in $\partial [0,1]^n$ and
$$\M(H(x)) \leq c(n)  \max\{1,\M(F(x))^{\frac{n-k-1}{n-k}}\}$$
\end{conjectureo}

The actual conjecture that we prove in the case of
low dimensions and use for the proof of the Weyl law is somewhat more technical.
%When $k>0$ we will need a certain technical condition that non-zero mass in the family 
%of cycles does not concentrate at a point.
%In Section \ref{sec:approximation} we define $\delta$-localized
%families of cycles that are much better behaved than an arbitrary family
%for our purposes. We also prove that an arbitrary family 
%can be approximated by a $\delta$-localized family.
%Also, 
We will be interested in the situation where maximal 
mass $\M(F(x)) \sim p^\frac{n-k}{n}$.
%We will bound the volume of the filling by  
In this case we have
$$p^{\frac{n-k-1}{n}}+
   \M(F(x))p^{-\frac{1}{n}} \sim \M(F(x))^{\frac{n-k-1}{n-k}} $$
%We will also need a bound for the volume of $\partial H(x) - F(x)$.
We also require that the family $F(x)$ is $\delta$-localized, a regularity condition that
can always be guaranteed after a small perturbation (see Section \ref{sec:approximation theorem}).
Finally, it will be convenient to state the conjecture 
for more general domains in $\R^n$ with piecewise smooth boundary satisfying 
certain regularity condition on the boundary (see Section \ref{theta-corners}). In particular, it is satisfied
if $\partial \Om$ is smooth. 

\begin{conjecture} \label{conj: isoperimetric}
Let $\Omega \subset \R^n$ be a connected domain, $\partial \Om$
 piecewise smooth boundary with $\theta$-corners, $\theta \in (0, \pi)$, $ 0 \leq k <n$. 
There exist constants $c(\Omega)>0$ and $\delta(\Omega,L, p)>0$ with the following property.
Let $F: X^p \rightarrow \mathcal{Z}_{k}(\Om, \partial \Om; \Z_2)$ be a continuous
$\delta(\Omega,L, p)$-localized contractible 
%$\delta$-localized 
$p$-dimensional family with $\M(F(x)) \leq L$.
%with no concentration of mass, 
%where $\delta>0$ is chosen to be sufficiently small.
Then there exists map
 $H: X \rightarrow I_{k+1}(\Om;\Z_2)$, such that

\begin{itemize}
    \item $\partial H(x) - F(x) \subset \partial \Om$
    for all $x$;
    \item $\M(H(x)) \leq c(\Om) \big(p^{\frac{n-k-1}{n}}+
   \M(F(x))p^{-\frac{1}{n}} \big)$
 %  \item $\M (\partial H(x) -F(x)) \leq c(\Om)(p^{\frac{n-k-1}{n-1}}+ \M(F(x)))$.
\end{itemize}
\end{conjecture}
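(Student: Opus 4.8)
The plan is to run a \emph{parametric} Federer--Fleming deformation into a low-dimensional skeleton, as in the one-parameter discussion leading to (\ref{FFinsmall}), but with the grid scale governed by the dimension $p$ of the parameter space. Fix a cube containing $\Om$ together with a cubical grid $\mathcal C$ on it of side length $\ell\sim p^{-1/n}$, so that $\mathcal C$ has $\sim p$ cells, its $(k+1)$-skeleton $\mathcal C^{(k+1)}$ has total mass $\lesssim\ell^{k+1-n}\sim p^{(n-k-1)/n}$, and --- this is the only place the $\theta$-corner hypothesis is used --- for a grid in sufficiently general position the clipped complex $\mathcal C\cap\Om$ has geometry bounded in terms of $\Om$ and $\theta$ alone, so that every Federer--Fleming and retraction constant below depends only on $\Om$. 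Choose the localization scale $\delta=\delta(\Om,L,p)$ small relative to $\ell$; then the $\delta$-localization keeps the support of each $F(x)$ of small volume inside every cell of $\mathcal C$ (so that cell-by-cell projection centres can be chosen), and it lets us triangulate $X$ finely enough that $F$ lies in a tiny flat ball of a single cycle on each simplex. One then builds $H(x)=G(x)+W(x)$, where $G(x)$ is the trace of a continuous deformation of $F(x)$ into $\mathcal C^{(k)}$ relative to $\partial\Om$, with $\M(G(x))\lesssim\ell\,\M(F(x))\sim\M(F(x))\,p^{-1/n}$, and $W(x)$ fills the resulting skeletal cycle inside $\mathcal C^{(k+1)}$, with $\M(W(x))\lesssim p^{(n-k-1)/n}$; the two bounds add up to the conclusion.

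The heart of the matter is the parametric Federer--Fleming step. The usual cell-by-cell radial projection depends on a choice of centres and is discontinuous in the cycle, so no single choice can serve all of $F(X^p)$ once $p\geq 1$. Instead, for each simplex $\sigma$ of the fine triangulation of $X$ choose one family of centres that is good for the (nearly constant) value $z_\sigma$ of $F|_\sigma$; because the localization keeps the cycles of small volume, the averaging (coarea / Eilenberg) estimate on the measure of bad centres shows that this family is in fact good for every $F(x)$ with $x\in\sigma$, and the resulting projection $P_\sigma$ is Lipschitz in the flat norm on the relevant flat ball. Since $P_\sigma$ lands in the finite, hence flat-discrete, set of cubical $k$-cycles, $x\mapsto P_\sigma(F(x))$ is constant on $\sigma$, equal to some $z'_\sigma\in\mathcal C^{(k)}$ with deformation trace $G_\sigma(x)$ of mass $\lesssim\ell\,\M(F(x))$. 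On the overlap of two adjacent simplices the traces $G_\sigma(x),G_{\sigma'}(x)$ both have mass $\lesssim\ell\,\M(F(x))$, so their difference is a $(k+1)$-chain of the same order with boundary $z'_\sigma-z'_{\sigma'}$ modulo $\partial\Om$; one interpolates $G_\sigma$ to $G_{\sigma'}$ over the overlap through $(k+1)$-chains of comparable mass and, simultaneously, interpolates the skeletal fillings $W_\sigma,W_{\sigma'}\subset\mathcal C^{(k+1)}$ through fillings of the varying skeletal cycle, at cost $\lesssim p^{(n-k-1)/n}$. Assembling these local choices into one continuous $H$ is an induction over the skeleta of the $X$-triangulation; the statement needed at each stage --- that two flat-close relative cycles can be joined by a family of fillings whose masses are controlled by the flat distance plus the endpoint masses --- is the same interpolation ingredient underlying the new proof of the Almgren isomorphism theorem announced in the introduction.

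Two further points must be watched. Contractibility of $F$ is essential and enters at the end: it guarantees that each $z'_\sigma$ is null-homologous relative to $\partial\Om$, so a filling $W_\sigma\subset\mathcal C^{(k+1)}$ exists, and that the interpolations over all overlaps can be made consistently; without it no continuous family $H$ exists, as the $\mathcal Z_0(S^1;\Z)$ example from Section \ref{sec:Almgren} shows. And everything must be carried out relative to $\partial\Om$: the deformation pushes the part of $F(x)$ sitting in clipped boundary cells into $\mathcal C^{(k)}$ together with $\partial\Om$, and the $\theta$-corner condition is invoked repeatedly to keep all constants independent of where the grid is placed.

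The step I expect to be the main obstacle is exactly this coherent parametric deformation: controlling the accumulation of interpolation mass through the induction on $\dim X$ so that it stays within the budget $\M(F(x))\,p^{-1/n}+p^{(n-k-1)/n}$, and in particular keeping the simultaneous interpolation of the skeletal fillings $W_\sigma$ inside $\mathcal C^{(k+1)}$, i.e.\ of mass $O(p^{(n-k-1)/n})$. This is the difficulty that restricts the results actually proved here to low dimensions, where one can instead exploit a favourable dimension count --- the $(k+p)$-dimensional support $\bigcup_x F(x)$ can sometimes be made to avoid an auxiliary dual $(n-k-1)$-skeleton --- or build the deformation by hand directly inside the skeleton, bypassing the general interpolation machinery.
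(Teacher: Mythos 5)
The statement you set out to prove is labelled a \emph{Conjecture} in the paper, and it remains one: the authors prove it only in the case $k=0$, $n=2$ (Theorem \ref{thm: isoperimetric0}) and remark, without details, that combining that result with the parametric coarea inequality of Section \ref{sec:coarea} yields $k=1$, $n=3$. The general case is open. Your write-up honestly flags the point where the argument stalls --- the ``coherent parametric deformation'' step --- so you are not in fact claiming a complete proof, and rightly so.

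That said, it is worth being precise about where the natural Federer--Fleming strategy you outline breaks down, since the sketch is otherwise of the right shape. The one-cycle-at-a-time deformation into a grid of scale $\ell\sim p^{-1/n}$ does give the right mass budget, and choosing one set of projection centres per simplex $\sigma$ of a fine triangulation of $X$, using $\delta$-localization so that a single choice serves all of $F|_\sigma$, is a sensible parametric version. The unresolved problem is the gluing. On a $j$-cell of $X$ you inherit competing choices from its faces, and the interpolations between them live in $\mathcal C^{(k+1)}$, whose total mass is $\sim p^{(n-k-1)/n}$; the induction is of depth $p=\dim X$, and there is no a priori reason the accumulated interpolation cost stays $O(p^{(n-k-1)/n})$ rather than acquiring a factor of $p$ or worse. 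The interpolation machinery you invoke does not by itself control this --- the constants in Propositions \ref{discrete to continuous} and \ref{extension}, which is what you would be leaning on, all depend on $p$. So the central claimed bound $\M(W(x))\lesssim p^{(n-k-1)/n}$ is asserted but not established, and that is exactly the open step.

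For comparison, the paper's proof in the $k=0$, $n=2$ case does not run a general parametric Federer--Fleming argument. It first passes to absolute $0$-cycles (Proposition \ref{relative to absolute}), then builds an explicit family of fillings $\tau(x)$ by coning from one external point $a\notin Q$ (Lemma \ref{linear}), after a preliminary modification (Lemma \ref{interval}) controlling mass on a boundary arc. Because these fillings consist of segments through a \emph{fixed} external point, the ``bend-and-cancel'' map $P$ pushing them into the $1$-skeleton can be defined once, globally --- the centres $p_i\in Q_i$ are chosen so that every segment through $a$ meets at most one ball $B_{2\eta}(p_i)$ --- so there is no simplex-by-simplex choice of projection centres to glue, and the entire difficulty you identify disappears. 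That trick is specific to $0$-cycles in dimension $2$, which is why the paper stops there; extending it is the open problem, not a step your sketch fills.
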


We also want to state a parametric isoperimetric inequality conjecture
for cycles in $\R^n$. It doesn't have immediate applications to the
Weyl law, but is interesting on its own. Unlike conjectures above we expect it
to be true for coefficients in every abelian group $G$.
For simplicity we state it for $G = \Z$.

\begin{conjecture}
Let $F: X^p \rightarrow \mathcal{Z}_k(\R^n; \Z)$ be a continuous 
family of cycles.
There exists a family of chains $H: X^p \rightarrow I_{k+1}(\R^n;\Z)$, such that $\partial H(x) = F(x)$ and
$$ \M(H(x))\leq c(n) \M(F(x))^{\frac{k+1}{k}}$$
\end{conjecture}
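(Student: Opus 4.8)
The plan is to prove a parametric form of the classical Federer--Fleming proof of the isoperimetric inequality~(\ref{FF}). Since $\R^n$ is contractible, a continuous family of fillings $H$ of $F$ exists at all (integrate a null-homotopy of $F$ in the manner of Almgren), so the entire content is the mass estimate. Recall the non-parametric argument for a single integer $k$-cycle $z$, $k\geq 1$: one fixes a cubical grid of side length $r$ together with well-chosen centers of projection in each cube (selected by an averaging argument) and applies the Federer--Fleming deformation to push $z$ into the $k$-skeleton, obtaining a cycle $z'$ with $\M(z') \leq c(n)\M(z)$ and a deformation $(k+1)$-chain $P$ with $\partial P = z - z'$ and $\M(P) \leq c(n)\,r\,\M(z)$. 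Any nonzero integer $k$-cycle supported in the $k$-skeleton has mass at least $r^k$, so taking $r = 10\,(c(n)\M(z))^{1/k}$ forces $z'=0$, and then $P$ is a filling of $z$ with $\M(P)\leq 10\,c(n)^{1+1/k}\,\M(z)^{(k+1)/k}$.

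To make this parametric I would first, by the approximation theorem of Section~\ref{sec:approximation theorem}, perturb $F$ slightly (flat-close, changing masses by at most a prescribed amount) so that it is continuous in the mass topology and $\M(F)$ is piecewise linear on $X^p$; then $x\mapsto\M(F(x))$ is continuous, bounded on the compact complex $X^p$, with sublevel sets subcomplexes of a subdivision. On the closed set $\{\M(F)=0\}$ put $H\equiv 0$; on its complement decompose $X^p=\bigcup_j X_j$ with $X_j=\{\,2^{j}\leq\M(F(x))\leq 2^{j+2}\,\}$ ($j$ bounded above, the $X_j$ with $j\to-\infty$ accumulating at $\{\M(F)=0\}$), consecutive pieces overlapping along $\{\,2^{j+1}\leq\M(F(x))\leq 2^{j+2}\,\}$. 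On one piece $X_j$ set $V=2^{j+2}$, fix the scale $r_j=10\,(c(n)V)^{1/k}$, and run the Federer--Fleming deformation at scale $r_j$ \emph{continuously in the whole family} $F|_{X_j}$: this produces a continuous family of pushed cycles $F'(x)$ in the $k$-skeleton with $\M(F'(x))\leq c(n)\M(F(x))\leq c(n)V<r_j^{\,k}$, which therefore vanish, $F'(x)\equiv 0$, together with a continuous family of deformation chains $P(x)$ with $\partial P(x)=F(x)$ and
$$\M(P(x))\;\leq\;c(n)\,r_j\,\M(F(x))\;\leq\;C(n)\,V^{1/k}\,\M(F(x))\;\leq\;C(n)\,\M(F(x))^{(k+1)/k},$$
the last step because $V=2^{j+2}\leq 4\,\M(F(x))$ on $X_j$. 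On each overlap $X_j\cap X_{j+1}$ one then interpolates continuously between the two families of fillings built at the comparable scales $r_j,r_{j+1}$ (ratio $2^{1/k}$), keeping all masses of order $V^{(k+1)/k}$; as every relevant mass there is pinched between fixed multiples of $2^{j}$, this is the uniformly bounded case of the same machinery. Reassembling the $X_j$ and extending continuously by $0$ into $\{\M(F)=0\}$ gives $H$.

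The main obstacle is the emphasized step: a \emph{parametric Federer--Fleming deformation theorem}, that is, a continuous family $x\mapsto P(x)$ of deformation chains obeying the mass bounds above. This is the same difficulty that underlies the other parametric isoperimetric conjectures in this subsection and our new proof of the Almgren isomorphism theorem. The trouble is that the Federer--Fleming deformation is built from radial projections away from interior points of cubes, which are discontinuous in the cycle being deformed precisely where it meets a center, and no single continuous choice of centers avoids a nontrivial family. The remedy is to discretize the parameter complex $X_j$ by a cubical subdivision $Y$ of fineness $\eta>0$, chosen small in terms of $n$, $p$ and $r_j$, so that $\F(F(v)-F(v'))<\eta$ on adjacent vertices; to choose grid translates and projection centers at the vertices coherently, avoiding the (measure-zero) bad sets; and to extend the deformation over the skeleta $Y^{(1)},\dots,Y^{(p)}$ by induction, the fineness guaranteeing that the deformation stays continuous at each stage while the extra mass it forces there is small and tends to $0$ as $\eta\to 0$. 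The total extra mass over the $p$ stages then also tends to $0$ with $\eta$, so it is absorbed into $C(n)V^{(k+1)/k}$ once $\eta$ is small enough, and since the approximation theorem supplies subdivisions of arbitrarily small fineness this closes the argument. (For $k=0$ one must additionally require the evident necessary condition that each $F(x)$ have augmentation zero, and argue in the same way; we suppress that case.)
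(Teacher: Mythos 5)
This statement is an open \emph{conjecture} in the paper, not a theorem --- the authors give no proof and explicitly flag the obstruction: ``The main difficulty in this conjecture seems to be proving a bound on $\M(H(x))$ that is independent of $p$.'' Your reduction to a parametric Federer--Fleming deformation theorem is a sensible way to organize the problem, and your dyadic decomposition of $X^p$ into pieces $X_j$ on which $\M(F)$ is pinned to the scale $2^j$ is a clean device for fixing the grid scale $r_j$. But the reduction leaves precisely the hard step untouched, and your remedy for it is an assertion, not an argument.

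Here is where it breaks. You claim that, after subdividing the parameter complex to fineness $\eta$ and choosing projection centers vertex by vertex, the inductive extension over the skeleta $Y^{(1)},\dots,Y^{(p)}$ costs extra mass that ``is small and tends to $0$ as $\eta\to0$.'' That is exactly the unproven claim, and no mechanism in the paper (or in your sketch) produces it. At adjacent vertices $v,v'$ the deformation chains $P(v),P(v')$ are $(k+1)$-chains of mass $\approx V^{(k+1)/k}$ spread over grid cells of side $r_j\approx V^{1/k}$, built from projections out of centers that may differ at $v$ and $v'$. Even assuming $\mathcal{F}(F(v),F(v'))<\eta$ and a $\delta$-localized, no-concentration family $F$, the chain $P(v)-P(v')$ need not be supported in a $\delta$-admissible collection of small balls: the projection spreads the localized difference $F(v)-F(v')$ across an entire grid cell and then cascades it through lower skeleta. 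So the interpolation device of Proposition~\ref{discrete to continuous} --- radial contraction inside small balls, with extra mass controlled by $\M(F\llcorner\bigcup U^C_l)$ --- has no analogue for the chains $P(x)$. The alternatives (filling the near-cycle $P(v)-P(v')$ by a $(k+2)$-chain and slicing it, or radially contracting within a grid cell of size $r_j$) either lose control of $\partial P(t)$ away from $\operatorname{supp}(F(v)-F(v'))$ or introduce slices whose mass is not $o(1)$ in $\eta$. Without the asserted $o(1)$ control per stage, each pass through a skeleton can cost a constant fraction of $V^{(k+1)/k}$, and after $p$ stages the $p$-dependence the conjecture is meant to remove reappears. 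In short: your outline is a reasonable research plan, but the crucial lemma (a parametric Federer--Fleming deformation with $p$-independent overhead) is exactly what is open, and the paper offers you nothing to check it against because the paper does not prove this conjecture.
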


The main difficulty in this conjecture seems to be proving a bound for $\M(H(x))$ that is independent of $p$.
An inequality of this type with 
constant $c(n,p)$ that depends on $p$ subexponentially
would also be of interest.
In another direction, one can ask if it is possible to replace
$c(n)$ with $c(k)$ and extend this result to infinite-dimensional
Banach spaces as in \cite{gromov1983filling} (see also \cite{wenger})
or prove a similar inequality for the Hausdorff content 
as in \cite{LLNR}.

\subsection{Parametric coarea inequality}
One can similarly formulate a parametric version of the coarea inequality.
Let $M$ be a Riemannian manifold with boundary and
let $M_{\eps} = M \setminus cl(N_{\eps}(\partial M))$, $M$ minus closed $\eps$-neighborhood of its boundary. If $z$ is
a Lipschitz relative cycle in $M$ and $r_0>0$, then the
coarea inequality applied to the distance function
implies the existence of some $r \in [0,r_0]$, 
such that $\Vol(\partial (z \llcorner M_r)) \leq \frac{Vol(z)}{r_0}$.

Very optimistically, one could conjecture that given a family of
relative cycles $\{z_x\}_{x \in X}$ and $r_0>0$ there exists 
$r \in [0,r_0]$, such that 
$\Vol(\partial (z_x \llcorner M_r)) \leq \frac{\Vol(z_x)}{r_0}$ for all
$x \in X$.

\begin{figure} 
	\centering
	\includegraphics[scale=1.2]{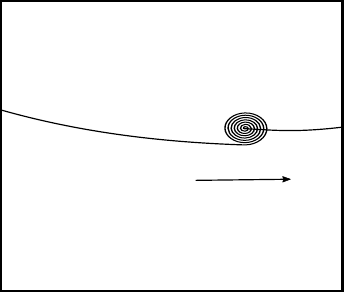}
	\caption{}
	\label{fig:example}
\end{figure}

This is not always the case. Consider a 1-parameter family 
$\{z_x\}_{x \in [0,1]}$
of relative cycles in $M = [0,1]^2$ with a small but 
tightly wound spiral moving from the center to the boundary of $[0,1]^2$ (Fig. \ref{fig:example}).
No matter how you choose $r$, for some value of $x \in [0,1]$ the spiral
will hit the boundary of $M_r$ and the intersection may 
have arbitrarily many points.
Letting $r$ vary as a continuous function of $x$ doesn't help.

However, what can help is if we allow ourselves to modify family $\{ z_x\}$
in the boundary $\partial M_{r}$, while only increasing
its mass by a small amount. It seems natural 
to conjecture that the amount of extra mass that we may have to add
should be less than or comparable to the size of an optimal $p$-sweepout
of the boundary by $k$-cycles, that is $\lesssim p^{\frac{n-k-1}{n-1}}$.
Note that this is much smaller than the mass of an optimal 
$p$-sweepout of the $n$-dimensional interior by $k$-cycles 
($\gtrsim p^{\frac{n-k}{n}}$).

Also, observe the following: if the family of relative cycles is a $p$-sweepout and
if it can be represented by a family of 
chains, such that their boundaries are a continuous family in $\partial M_r$,
then the family of boundaries is a $p$-sweepout of $\partial M_r$.
On the other hand, we know that an optimal $p$-sweepout of $[0,1]^2$ by $1$-cycles
has maximal mass of $const \, \sqrt{p}$, 
while every $p$-sweepout of 
$\partial [0,1]^2$ must have $0$-cycles of mass $\geq p$. 
It follows that if we want the family of boundary
cycles to be continuous, then $\frac{\M(z_x)}{r_0}$ bound is not sufficient.

Motivated by this we state the following optimistic conjecture:

\begin{conjectureo}
Let $n>k\geq 1$. Let $F:X^p \rightarrow \mathcal{Z}_k([0,1]^n, \partial [0,1]^n;\Z_2)$
be a continuous family of cycles and $r \in (0, \frac{1}{2})$.
Then there exists a family $F':X \rightarrow I_k([r_0,1-r_0]^n;\Z_2)$, 
such that
\begin{enumerate}
    \item $F'(x)\llcorner (r,1-r)^n = F(x)\llcorner (r,1-r)^n$;
    \item $\M(F'(x)) \leq \M(F(x)) + c(n)p^{\frac{n-k-1}{n-1}}$;
    \item $\partial F'(x)$ is a continuous family of 
    $(k-1)$-cycles in $\partial [r_0,1-r_0]^n$;
    \item $\M(\partial F'(x))\leq c(n) \big(\frac{\M(F(x))}{r} + p^{\frac{n-k}{n-1}} \big)$
\end{enumerate}
\end{conjectureo}

In our more down-to-earth conjecture that we use to prove
the Weyl law we allow the $F'(x)$ to be a small perturbation 
of $F(x)$ in the interior of $(r,1-r)^n$.
In addition,
we require that the family satisfies a certain technical assumption
that the mass does not concentrate at a point. We also state the result for more general domains
with piecewise smooth boundary with $\theta$-corners (their definition is
given in Section \ref{theta-corners}).

\begin{conjecture} \label{conj: coarea}
Let $\Omega \subset \R^n$ be a connected domain, $\partial \Om$
 piecewise smooth boundary with $\theta$-corners, $\theta \in (0, \pi)$.
%Let $\Omega$ be an open set in $\R^n$ with piesewise smooth boundary.
Fix $\eta>0$.  For all $p\geq p_0(\Om)$, $r \in (0, r_0(\Om))$ and $\delta>0$ the following holds.
Let $F: X^p \rightarrow \mathcal{Z}_k(\Omega,\partial \Omega; \Z_2)$ be
a continuous map with no concentration of mass.
There exists a map $F': X \rightarrow %\mathcal{Z}_k(\Omega_r,\partial \Omega_r; \Z_2)$, such that 
I_k(cl(\Omega_r); \Z_2)$, such that 

\begin{enumerate}
\item $\partial F'(x)$ is a continuous $\delta$-localized family
in $\mathcal{Z}_{k-1}(\partial \Om_{r}; \Z_2)$;
    \item $\mathcal{F}(F(x)\llcorner  \Om_r, F'(x)) < \eta$;
\item $\M(F'(x)) \leq \M(F(x)) + \frac{\M(F(x))}{r}p^{-\frac{1}{n-1}} +C(\Om) \M(\partial \Om)^{\frac{k}{n-1}} p^{\frac{n-k-1}{n-1}}$;
\item  $\M(\partial F'(x)) \leq c(\Om) ( \frac{\M(F(x))}{r} + p^{\frac{n-k}{n-1}})$.
\end{enumerate}

Moreover, if $F$ is a $p$-sweepout of $\Omega$, then 
$\partial F': X \rightarrow \partial \Om_{r}$
is a $p$-sweepout of $\partial \Om_{r}$
by $(k-1)$-cycles.
\end{conjecture}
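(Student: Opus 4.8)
The plan is to localise everything in a collar of $\partial\Om$, to observe that the naive restriction $F(x)\llcorner cl(\Om_r)$ is already a \emph{continuous} family of relative cycles, and then to spend the rest of the argument choosing honest chain representatives whose boundaries vary continuously in $x$; the parametric isoperimetric inequality enters only in the mass bookkeeping of this last step, one dimension down, which is why the exponents in (3)--(4) carry an $n-1$ and why the statement is proved unconditionally only in the range where Conjecture~\ref{conj: isoperimetric} is known. Set $t(y)=\mathrm{dist}(y,\partial\Om)$. The $\theta$-corner hypothesis (Section~\ref{theta-corners}) is exactly what guarantees that for $s<r_0(\Om)$ the function $t$ is Lipschitz, the level sets $\partial\Om_s=\{t=s\}$ are closed hypersurfaces, again piecewise smooth with $\theta'$-corners, and the collar $\{t\le 2r_0\}$ is bi-Lipschitz --- with constants depending only on $\Om$ --- to the product $\partial\Om\times[0,2r_0]$ carrying $t$ to the second coordinate. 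All modifications of $F$ will be confined to a thin sub-collar $\{r\le t\le r+w\}$, with $w=w(\Om,L,\eta,p)>0$ chosen small at the end.

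The first and elementary point is that $x\mapsto F(x)\llcorner cl(\Om_r)$ is continuous as a map $X\to\mathcal{Z}_k(cl(\Om_r),\partial\Om_r;\Z_2)$: if $F(x)-F(x')=a+\partial b$ with $\M(a)+\M(b)$ small, then by the slicing formula $(F(x)-F(x'))\llcorner cl(\Om_r)=a\llcorner cl(\Om_r)+\partial(b\llcorner cl(\Om_r))+\langle b,t,r\rangle$, and the slice term $\langle b,t,r\rangle$ is supported in $\partial\Om_r$, hence invisible in the relative flat norm. So, up to a choice of representatives, $\Phi(x):=F(x)\llcorner cl(\Om_r)$ is already the family we want; the whole difficulty is that the boundary $x\mapsto\langle F(x),t,r\rangle$ of the obvious representative is discontinuous --- precisely the spiral of Figure~\ref{fig:example} --- and of uncontrolled mass.

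The heart of the proof is the construction of good representatives. I would perturb $F$, using the approximation theorem of Section~\ref{sec:approximation theorem}, so that it becomes $\delta$-localised, the perturbation is uniformly flat-small, the mass of $F(x)$ inside the thin sub-collar is uniformly small (the no-concentration-of-mass hypothesis is used here, and it also makes the slices at nearby levels flat-close), and, along a generic level $\rho\in(r,r+w)$ chosen cell by cell over a fine triangulation of $X$ so that coarea gives slice mass $\lesssim\M(F(x))/r$, every crossing of $\mathrm{spt}\,F(x)$ with $\{t=\rho\}$ is ``shallow'': across each wall of the triangulation the slice $\langle F(x),t,\rho\rangle$ changes only by the boundary of a small-mass cap in $\{t\le\rho\}$, so each jump of the slice is flat-small. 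Because the jumps are flat-small, one can build, cell by cell up the skeleta, a continuous family $\beta\colon X\to\mathcal{Z}_{k-1}(\partial\Om_r;\Z_2)$ with $\beta(x)$ flat-$\eta$-close to $\langle F(x),t,r\rangle$; letting $F'(x)$ be $\Phi(x)$ modified inside the sub-collar by a chain of flat norm $<\eta$ so that $\partial F'(x)=\beta(x)$ yields estimate (2) and the $\M(F(x))$-term of (3). The real work is to carry out the skeletal construction of $\beta$ --- which forces one to interpolate continuously, inside $\partial\Om_r$, between the small-mass slices at the various chosen levels --- with controlled \emph{mass}. Here I would invoke, inside $\partial\Om_r$, the parametric isoperimetric inequality (Conjecture~\ref{conj: isoperimetric}, in its analogue for the closed $(n-1)$-manifold $\partial\Om_r$, valid in the same low-dimensional range) with cycle dimension $k-1$: its filling bound $c\,(p^{\frac{n-k-1}{n-1}}+\tfrac{\M(F(x))}{r}p^{-\frac{1}{n-1}})$ --- rescaled to expose the volume factor $\M(\partial\Om)^{k/(n-1)}$ --- accounts for the non-leading terms of (3), while the term $p^{\frac{n-k}{n-1}}$ of (4) is the mass of an efficient $p$-sweepout of $\partial\Om_r$ by $(k-1)$-cycles, which one arranges the construction of $\beta$ to respect, the slices themselves contributing the $\M(F(x))/r$ part. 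I expect this step --- securing continuity of $\partial F'$ simultaneously with all four estimates, and organising the skeletal obstruction argument so that the parametric isoperimetric inequality can be applied cell by cell --- to be the main obstacle, and the reason the theorem is at present unconditional only in low dimensions.

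For the ``moreover'' I would argue cohomologically. Slicing at a generic level $\rho$ near $r$ defines a partially defined map $S\colon\mathcal{Z}_k(\Om,\partial\Om;\Z_2)\to\mathcal{Z}_{k-1}(\partial\Om_\rho;\Z_2)$, continuous on the part of cycle space that is relevant, and one checks $S^*\bar\lambda=\lambda$ on the Almgren generators by evaluating on a one-parameter sweepout that in the collar is $\gamma_u\times(\text{interval})$, for $\gamma_u$ a $1$-sweepout of $\partial\Om$ by $(k-1)$-cycles, whose slice at level $\rho$ is the $1$-sweepout $u\mapsto\gamma_u\times\{\rho\}$ of $\partial\Om_\rho$. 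By construction $\partial F'=\beta$ is, up to contractible correction families (the parametric isoperimetric fillings) and a flat-$\eta$ adjustment --- neither of which affects cohomology classes --- the family $S\circ F$; hence $F^*\lambda^p\ne 0$ forces $(\partial F')^*\bar\lambda^p\ne 0$, i.e.\ $\partial F'\colon X\to\mathcal{Z}_{k-1}(\partial\Om_r;\Z_2)$ is a $p$-sweepout of $\partial\Om_r$.
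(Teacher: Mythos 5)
Your proposal takes a genuinely different route from the paper, but it rests on a misreading of the paper's logical structure and contains a gap at the step you yourself flag as ``the main obstacle.''

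First, the logical structure. You assert that the parametric isoperimetric inequality (Conjecture~\ref{conj: isoperimetric}) ``enters in the mass bookkeeping'' of the coarea construction and that ``the statement is proved unconditionally only in the range where Conjecture~\ref{conj: isoperimetric} is known.'' This is not how the paper works. Theorem~\ref{coarea} (the $n=3,\,k=1$ case of Conjecture~\ref{conj: coarea}) is proved in Section~\ref{sec:coarea} by a self-contained construction that does not invoke Theorem~\ref{isoperimetric} or Conjecture~\ref{conj: isoperimetric} at any point; the two theorems are independent ingredients that are only combined later in Section~\ref{sec:weyl}. The one remark the paper does make about reducing one conjecture to the other goes in the \emph{opposite} direction: coarea for $k$-cycles in $n$-domains plus isoperimetric for $(k-1)$-cycles in $(n-1)$-domains gives isoperimetric for $k$-cycles in $n$-domains --- not coarea from isoperimetric.

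Second, the gap. Conjecture~\ref{conj: isoperimetric} produces a continuous family of $k$-chains $H(x)$ with $\partial H(x) = z(x)$ and $\M(H(x))$ controlled. What your skeletal construction of $\beta$ actually needs, when extending from $\partial C$ to a cell $C$, is a continuous family of $(k-1)$-\emph{cycles} interpolating between slices, with the \emph{mass of every intermediate cycle} controlled. A filling $H(x)$ does not directly yield such an interpolation: slicing $H(x)$ along a function produces intermediate cycles whose mass is governed by a coarea estimate on $H(x)$, not by $\M(H(x))$, so the filling bound $c\,(p^{\frac{n-k-1}{n-1}}+\tfrac{\M(F(x))}{r}p^{-\frac{1}{n-1}})$ you cite does not transfer to the interpolants without further argument. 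You would also need to verify, cell by cell, the hypotheses of Conjecture~\ref{conj: isoperimetric} (contractibility and $\delta$-localization of the families being filled), and it is not clear the relevant families are contractible before the construction is already finished. These are precisely the difficulties that force the paper into its explicit combinatorial construction: triangulate $\Sigma$ into pieces $Q_i$ of diameter $\sim p^{-1/2}$, choose per-vertex cutting levels $s(x)$ avoiding the localization balls, add correction chains $f_i(x)$ supported in $\partial Q_i$ to cap boundary mass per triangle, and interpolate by the explicit radial-scaling and cone maps $\Psi_\rho^i,\,\Phi_\rho^i$ tracked through an inductive property on skeleta. None of this is replaced by invoking the isoperimetric conjecture; the isoperimetric input is not a shortcut through the same bottleneck but an independent piece used elsewhere. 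Your cohomological argument for the ``moreover'' clause is the one part that is essentially sound and parallel to what the paper does.
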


We note that the parametric coarea inequality Conjecture for $k$-cycles in $n$-domains together with 
parametric isoperimetric inequality for $(k-1)$-cycles
in $(n-1)$-domains can be used to prove 
parametric isoperimetric inequality for $k$-cycles in $n$-domain.

\subsection{Weyl law} In the proof of the Weyl law for
$(n-1)$-cycles in a compact $n$-manifold in \cite{LMN}
manifold $M$ is subdivided 
into small domains $U_i$ and then a family of hypersurfaces $\{ \Sigma_t \}$ is defined
that restricted to each $U_i$ is a $p$-sweepout $\{ \Sigma_t^i = \Sigma_t \cap U_i \}$ of $U_i$.
However, these hypersurfaces have boundary in $\partial U_i$
and do not form a family of cycles in $M$ (only a family of 
chains). A crucial step in the proof is to glue this family of chains
into a family of cycles in $M$ by adding a family of $(n-1)$-chains
inside $\bigcup \partial U_i$. Since we are using $\mathbb{Z}_2$ coefficients their total volume is bounded by the volume of $\bigcup \partial U_i$
and is negligible compared to $p^{\frac{1}{n}}$ for $p$ very large. 

In the case of higher codimension we can similarly define
a family of $k$-chains, which restrict to a $p$-sweepout
of $U_i$ by relative cycles. 
However, to carry out the second step we need to find a family 
of chains in $\bigcup \partial U_i$ of controlled volume that we can add to 
$\Sigma_t$ to turn it into a family of cycles. The problem is
twofold:
\begin{enumerate}
    \item the volumes of $\partial \Sigma_t^i$ can be arbitrarily large;
    \item even if we find a way to control the volume of $\partial \Sigma_t^i$,
    we also need to construct a continuous family of chains filling $\partial \Sigma_t^i$
    of volume that is negligible compared to the maximal volume of $\Sigma_t^i$.
\end{enumerate}
Together the parametric coarea inequality and parametric isoperimetric inequality 
solve these two problems.

\subsection{Organization} In Section \ref{sec:approximation}
we prove some important results on how families of cycles can be approximated
with families that have some nice special properties that we call
$\delta$-localized families. In Section \ref{sec:Almgren} we use these results
to give a new simpler proof of the Almgren isomorphism theorem.
In Section \ref{sec:coarea} we prove a parametric coarea inequality for
1-cycles in 3-dimensional domains. In Section \ref{sec:isoperimetric}
we prove a parametric isoperimetric inequality for 0-cycles in 2-dimensional
domains. In Section \ref{sec:weyl} we show how the Weyl law can be proved
using the parametric coarea and isoperimetric
inequality conjectures.

\begin{remark}
    After an earlier draft of this paper appeared on the arxiv Bruno 
    Staffa proved Conjectures \ref{conj: coarea} and \ref{conj: isoperimetric} for $k=1$ and all $n$ \cite{StaCoarea}, \cite{StaWeyl}. Consequently, Staffa obtained the Weyl law Theorem \ref{weyl general} for $1$-cycles in all dimensions.
\end{remark}

\subsection{Acknowledgements} The second author was partially supported by NSF grant, NSERC Discovery grant and Accelerator Award and Sloan Fellowship. The second author would like to thank Alexander Kupers, Fernando Marques and Andre Neves for stimulating discussions. We are grateful to the anonymous referee for numerous corrections to the first draft of this article.
We would like to thank Bruno Staffa for finding a gap in the proof of 
Proposition \ref{extension} in an earlier draft of this paper and suggesting a way to fix it.

\section{Approximation results 
for families of cycles} \label{sec:approximation}

% When $M$ is a  region of $\R^{n+1}$ we allow for less regularity  and require $M$ to be a compact region with  finite perimeter  with the extra property that for all $p\in \partial  M$ there is a hyperplane $H\subset \R^{n+1}$ containing $p$, $r>0$, and a Lipschitz function $\phi$ defined on $H\cap B_r(p)$ such that, denoting  by $N$ a  normal vector to $H$,
% $$  M\cap B_r(p)=B_{r}(p)\cap\{x+tN: x\in H, t\geq \phi(x)\}.$$
%We call these regions  {\em Lipschitz domains} (see \cite[Definition 2.5]{HMT}).
\subsection{Piecewise smooth boundaries with corners} \label{theta-corners}
It will be convenient to work with manifolds that have 
piecewise smooth boundaries with no cusps. In this subsection we
give a more precise definition of the ``no cusps'' property.

Let $M$ be a Riemannian manifold with piecewise smooth boundary.
Given a subset $Y \subset M$ let $
N_{r}(Y) = \{x: dist(x, Y)<r \}$ denote the $r$-neighbourhood of $Y$.
%Let $Y_r = Y \setminus N_r(\partial Y)$.

Given $\theta \in (0, \frac{\pi}{2})$ we will say that
$\partial M$ is a boundary with $\theta$-corners if
for every $p \in \partial M$ and every $\eps>0$
there exists an open ball $B \ni p$
and a $(1+\eps)$-bilipschitz diffeomorphism $\Phi: B \rightarrow \R^n$,
such that $\Phi(p) = 0$ and $\Phi(\partial M \cap B)$
lies in a union of at most $n$ hyperplanes $P_1, ..., P_j$ in general position (that is, the intersection of any $k$ hyperplanes is a liner subspace of codimension $k+1$),
s.t. the dihedral angle $\angle(P_i, P_{i'}) \geq \theta$
for all $i \neq i'$.

We will need the following useful lemma.

\begin{lemma} \label{lem: boundary with corners}
For every $\eps>0$ there exist $r_0>0$ and $(1+\eps)$-Lipschitz
maps $E: \partial M \times [0,r_0] \rightarrow U$, $N_{\frac{9r_0}{10}}(\partial M) \subset U
\subset N_{\frac{11r_0}{10}}(\partial M)$,
%$\Phi_1: M \rightarrow M_r $ 
%and $\Phi_2: M_r \rightarrow M$, 
such that $E$ is the identity map on $\partial M$.
%\begin{enumerate}
%    \item $E$ is the identity on $\partial M$ a $(1+\eps)$-bilipschitz diffeomorphism;
%    \item $\Phi_1$ is the identity map on $M_r$;
%    \item $\Phi_2$ is the identity map on $M_{2r}$.
%\end{enumerate}
%We will call $\Phi_1$ the projection map and $\Phi_2$
%the stretching map.
\end{lemma}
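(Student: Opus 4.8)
The plan is to assemble $E$ from local pieces adapted to the stratification of $\partial M$ by its faces, and then to patch by induction on the codimension of the strata, exactly as in the classical construction of collars for manifolds with corners --- only now keeping track of metric distortion. Fix a small auxiliary $\eps'$ and, using compactness of $\partial M$, choose a finite cover by coordinate balls of two types: (a) balls $V$ centred at points in the interior of a single smooth face; (b) balls $W$ centred at points of the lower-dimensional strata, on which the $\theta$-corners definition furnishes a $(1+\eps')$-bilipschitz diffeomorphism $\Phi\colon W\to\R^n$ carrying $(\partial M, M)$ to the standard polyhedral model associated with a union of at most $n$ hyperplanes in general position whose pairwise dihedral angles are $\ge\theta$.

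On a type-(a) ball the normal exponential map $\Psi(x,s)=\exp_x(s\nu(x))$, with $\nu$ the inward unit normal, does the job: $d\Psi_{(x,0)}$ is the identity for the product metric and $\Psi^{*}g-(g_{\partial M}+ds^{2})=O(s)$, so for $r_0$ small $\Psi$ is $(1+\eps)$-bilipschitz onto a tubular neighbourhood of the face containing $N_{r_0-o(r_0)}$ of it. The substantive work is the type-(b) balls, and this is the step I expect to be the main obstacle. In the polyhedral model one must produce a continuous collar of the union of hyperplane-faces inside the cone which is $(1+\eps)$-Lipschitz and whose image fills a one-sided $r_0$-neighbourhood of that union. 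Away from the lower-dimensional strata one again has the exact Euclidean normal collar of the ambient face; the difficulty is to interpolate these normal collars continuously across each stratum. I would do this inductively: first over a neighbourhood of the deepest stratum, then extend over the next stratum up, and so on to the top faces, at each stage extending the partial collar already defined on the boundary of the region under consideration. The general-position assumption guarantees that near a codimension-$k$ stratum only the relevant faces meet, in controlled position, and the bound $\theta$ on the dihedral angles makes all the interpolation scales --- and hence the resulting Lipschitz constants and the width of the filled neighbourhood --- uniform over $\partial M$; this uniformity is what lets a single $r_0=r_0(\eps)$ work.

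Finally I would glue the type-(a) and type-(b) collars. Since every local piece restricts to the identity on $\partial M$ and, near its base, has $t$-lines that are $C^{0}$-close to $x\mapsto x$ and nearly transverse to $\partial M$ in a common way, one patches along the $t$-direction rather than by averaging the maps (which would spoil the Lipschitz bound): on overlaps one reparametrizes the $t$-variable so that the two pieces agree, using that both are $(1+\eps')$-close to an isometry in the relevant region. The inclusions $N_{9r_0/10}(\partial M)\subset U\subset N_{11r_0/10}(\partial M)$ then follow from the two facts that each local piece already covers essentially its entire $r_0$-collar and that $E$ is $(1+\eps)$-Lipschitz with $E|_{\partial M}=\mathrm{id}$, after shrinking $r_0$ once more if necessary. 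The crux, to reiterate, is the explicit collar construction in the polyhedral model and the verification that interpolation across the strata can be carried out with Lipschitz constant tending to $1$ while still filling a full one-sided neighbourhood; the angle bound $\theta$ and the general-position hypothesis are exactly the inputs that make this quantitatively controllable.
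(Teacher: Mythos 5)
The paper's proof is shorter and sidesteps the difficulty you correctly identify as "the main obstacle." Rather than building local collar \emph{maps} and trying to glue them, the paper builds a single global transverse vector field $V$ on $\partial M$: inductively over the strata one defines $V$ to be unit, smooth on each stratum, and making angle $\geq \theta/3$ with $\partial M$; $V$ is then extended into the interior, and $E(x,t)$ is taken to be a smooth map with $\partial_t E(x,0)=V(x)$ (essentially the time-$t$ flow). For $r_0$ small this is automatically $(1+\eps)$-bilipschitz, and the two inclusions for $U$ follow from transversality of $V$.

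Your patching step is where the argument has a real gap. You propose to "reparametrize the $t$-variable so that the two pieces agree." But two local collars $E_1,E_2$ on an overlap arise from different choices of transverse direction at each boundary point; the curves $t\mapsto E_1(x,t)$ and $t\mapsto E_2(x,t)$ are genuinely different curves in $M$, not just different parametrizations of one curve, so no reparametrization of $t$ alone can make $E_1$ and $E_2$ agree. (Near-isometry does not help here: two distinct unit-speed rays emanating from $x$ are both isometric parametrizations.) The right fix — and essentially what the paper does — is to patch the \emph{vector fields} rather than the maps: at each boundary point the set of unit directions making angle $\geq \theta/3$ with all local faces is a nonempty convex spherical cap (this is where the $\theta$-corners hypothesis and general position enter), so a partition-of-unity average of local admissible directions is still admissible. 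One then obtains a single globally defined $V$, smooth on strata, and a single collar map by flowing. Your local pieces, in particular the normal exponential map on type-(a) balls and the explicit Euclidean model on type-(b) balls, would indeed supply such local choices of $V$, so the proof can be rescued, but as written the gluing of maps by reparametrization does not go through.
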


\begin{proof}
If $\partial M$ is smooth, the we can choose $E$
to be the exponential map $E = exp_{\partial M}$. 

Otherwise, consider a stratification of $\partial M$ 
into $0-$, $1-$, ..., $(n-1)$-dimensional smooth strata $\{S_i\}$.
Inductively, 
we can define a continuous vectorfield $V$ on $\partial M$,
such that 
\begin{itemize}
    \item $V$ is smooth and $|V(x)|=1$ on $S_i$ for each $i$;
    \item $\angle(V(x), \partial M) \geq \frac{\theta}{3}$.
\end{itemize}
We can extend $V$ to a smooth vectorfield in the interior
of $M$ and define $E:\partial M \times [0,r] \rightarrow N_r(\partial M)$
to be a smooth map with $\frac{\partial E}{\partial t}|_{(x,t)=(x,0)} = V(x)$.
Then for $r$ sufficiently small this map will be the desired
bilipschitz diffeomorphism.
%Maps $\Phi_1$ and $\Phi_2$ are constructed similarly.
\end{proof}

Let $r_0>0$ and $E$ be as in the lemma above. For any $r \in (0, r_0)$ 
let $M_r = M \setminus E(\partial M \times [0,r))$.

Define a projection map $\Phi_1^r: M \rightarrow M_r $
given by $\Phi_1^r(E(x,t))= E(x,r)$ if $x \in M \setminus M_r$
and $\Phi_1^r(x) = x$ otherwise. 
%For $0 \leq r_1 < r_2 \leq r_0$ define a stretching map $\Phi_2^{r_1,r_2}:M_{}  $ 
Observe that $\Phi_1^r$ is $(1+\eps)$-Lipschitz.

We have the following consequence of the
coarea formula for flat chains.

\begin{theorem} \label{classical coarea}
Let $\tau \in I_{k}(M; G)$ then there exists a constant $c(\eps)$
with $\lim_{\eps \rightarrow 0} c(\eps) = 1$, so that for every $\eps>0$
there exists $r \in (0, \eps)$, such that
$\tau \llcorner M_r \in I_{k}(M_r; G)$ and
$$\M(\tau \llcorner \partial M_r)) \leq c(\eps) \frac{\M(\tau)}{\eps}$$
Moreover, if $\tau \in \mathcal{Z}_k(M, \partial M; G)$,
then $\tau \llcorner M_r \in \mathcal{Z}_k(M_r, \partial M_r; G)$
and $\partial (\tau \llcorner M_r) \in \mathcal{Z}_{k-1}(\partial M_r; G)$.
\end{theorem}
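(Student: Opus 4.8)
The plan is to derive this from the classical coarea formula for flat chains applied to the distance-type function associated with the collar map $E$. First I would define, for $\tau \in I_k(M;G)$, the function $f: M \to [0, r_0]$ by $f(E(x,t)) = t$ on the collar $E(\partial M \times [0,r_0])$ and $f \equiv r_0$ on $M_{r_0}$; since $E$ is $(1+\eps)$-bilipschitz, $f$ is Lipschitz with $\mathrm{Lip}(f) \le 1 + \eps$, and the level set $f^{-1}(r)$ is exactly $\partial M_r$ for $r \in (0, r_0)$. The slicing theory for flat chains (as in Federer, or the appendix of \cite{Gu2}) then tells us that for a.e. $r$, the slice $\langle \tau, f, r\rangle$ is a well-defined flat $(k-1)$-chain, that $\tau \llcorner M_r = \tau \llcorner \{f > r\} \in I_k(M_r; G)$, that $\partial(\tau \llcorner M_r) = (\partial \tau)\llcorner M_r - \langle \tau, f, r\rangle$, and that the coarea-type estimate
\begin{equation*}
\int_0^{\eps} \M(\langle \tau, f, r\rangle)\, dr \le \mathrm{Lip}(f)\, \M(\tau \llcorner \{0 \le f \le \eps\}) \le (1+\eps)\, \M(\tau)
\end{equation*}
holds.

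Next I would run the standard averaging/pigeonhole argument: since the integral over $[0,\eps]$ of the nonnegative function $r \mapsto \M(\langle \tau, f, r\rangle)$ is at most $(1+\eps)\M(\tau)$, there must exist $r \in (0, \eps)$ with $\M(\langle \tau, f, r\rangle) \le \frac{(1+\eps)\M(\tau)}{\eps}$. Setting $c(\eps) = 1 + \eps$ (which indeed tends to $1$) and observing that for such a good $r$ one has $\partial(\tau \llcorner M_r) = \langle \tau, f, r\rangle$ when $\tau$ is already a cycle, or more generally $\M(\partial(\tau\llcorner M_r)) \le \M(\partial \tau \llcorner M_r) + \M(\langle\tau,f,r\rangle)$ — but for the clean statement as written we only need the cycle/relative-cycle case, so I would restrict attention to choosing $r$ in the full-measure set of good radii and conclude $\M(\partial(\tau \llcorner M_r)) \le c(\eps)\frac{\M(\tau)}{\eps}$. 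For the ``moreover'' part: if $\tau$ is a relative cycle, i.e. $\partial \tau$ is supported in $\partial M$, then for $r$ a good radius the chain $\tau \llcorner M_r$ has boundary equal to $\langle \tau, f, r\rangle$ plus a piece of $\partial \tau$ that lies in $\partial M \cap M_r = \emptyset$ — so $\partial(\tau \llcorner M_r) = \langle \tau, f, r\rangle$ is supported in $f^{-1}(r) = \partial M_r$, making $\tau \llcorner M_r$ a relative cycle in $(M_r, \partial M_r)$; and since $\partial \langle \tau, f, r\rangle = -\langle \partial\tau, f, r\rangle$ and $\partial \tau$ is supported in $\partial M$ away from the slice, the slice $\langle\tau,f,r\rangle$ is itself a cycle in $\partial M_r$.

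The main obstacle — really the only non-routine point — is making the slicing statements rigorous for flat chains with coefficients in a general abelian group $G$ rather than for integral currents: one needs the fact that restriction to $\{f > r\}$ maps $I_k(M;G)$ to $I_k(M_r;G)$ for a.e. $r$ (not merely to flat chains), which is where the assumption $\tau \in I_k$ rather than $\tau \in \mathcal{Z}_k$ is used, together with the coarea inequality for the mass of slices. This is standard (Fleming, and the treatment of flat chains over coefficient groups), so I would simply cite it; the geometric content specific to this paper is entirely contained in Lemma \ref{lem: boundary with corners}, which supplies the bilipschitz collar $E$ guaranteeing that $f$ is $(1+\eps)$-Lipschitz with the desired level sets, and hence that the constant degrades only by the harmless factor $c(\eps) \to 1$.
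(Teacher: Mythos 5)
The paper states Theorem~\ref{classical coarea} without proof, treating it as a direct consequence of the coarea/slicing formula for flat chains, and your proposal is precisely the standard argument one would write down: use the $(1+\eps)$-bilipschitz collar $E$ from Lemma~\ref{lem: boundary with corners} to build a Lipschitz function $f$ whose level sets are $\partial M_r$, invoke slicing theory ($\tau\llcorner\{f>r\}$ is an integral chain for a.e.\ $r$, with $\partial(\tau\llcorner M_r)=(\partial\tau)\llcorner M_r - \langle\tau,f,r\rangle$ and $\int_0^\eps\M(\langle\tau,f,r\rangle)\,dr\le \mathrm{Lip}(f)\,\M(\tau\llcorner\{0\le f\le\eps\})$), and then pigeonhole to pick a good $r$. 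This is correct and matches the paper's intent.

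One remark worth keeping explicit: as literally stated for arbitrary $\tau\in I_k(M;G)$, the bound $\M(\partial(\tau\llcorner M_r))\le c(\eps)\M(\tau)/\eps$ cannot hold, since $\partial(\tau\llcorner M_r)$ contains the uncontrolled piece $(\partial\tau)\llcorner M_r$. You flag this correctly. What the averaging argument actually yields for general $\tau$ is a bound on the slice mass $\M(\langle\tau,f,r\rangle)$, which is exactly what is used in the paper's applications (e.g.\ in Proposition~\ref{extension}, where the bound taken is on $\M(\tau\llcorner\partial B_i')$); the cleaner boundary-mass bound is reserved for the relative-cycle case, where $(\partial\tau)\llcorner M_r=0$, and there your argument goes through without issue. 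Your treatment of the ``moreover'' clause — using $\partial\langle\tau,f,r\rangle=-\langle\partial\tau,f,r\rangle=0$ because $\partial\tau$ lives in $\partial M$, disjoint from $f^{-1}(r)$ for $r>0$ — is also the right one.
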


\subsection{Cubical complexes}
We will be considering families of flat chains
parametrized by a cubical complex. We start with some definitions
that will be convenient when working with these complexes,
following \cite{LMN}.

 Given $m\in\N$ let $I^m$ denote 
 the $m$-dimensional cube $[0,1]^m$. 
 For each $j\in \N$, $I(1,j)$ denotes the cube complex on $I^1$  whose $1$-cells and $0$-cells (vertices) are, respectively,  
$$[0,3^{-j}], [3^{-j},2 \cdot 3^{-j}],\ldots,[1-3^{-j}, 1]\quad\mbox{and}\quad [0], [3^{-j}],\ldots,[1-3^{-j}], [1].$$
We denote by $I(m,j)$  the cell complex on $I^m$: 
$$I(m,j)=I(1,j)\otimes\ldots \otimes I(1,j)\quad (\mbox{$m$ times}).$$
Then $\alpha=\alpha_1 \otimes \cdots\otimes \alpha_m$ 
is a $k$-cell of $I(m,j)$ if and only if $\alpha_i$ is a cell
of $I(1,j)$ for each $i$, and $\sum_{i=1}^m {\rm dim}(\alpha_i) =k$. 
We will abuse notation by identifying a $k$-cell 
$\alpha$ with its support: $\alpha_1 \times \cdots \times \alpha_m \subset I^m$. 

Let $X$ be a cubical subcomplex of $I(m,j)$.
For $i \geq j$ let $X(i)$ denote a subcomplex of $I(m,i)$,
whose cells lie in a cell of $X$. In other words, $X(i)$
is obtained from $X$ by subdividing each $k$-cell of $X$
into $3^{q(i-j)}$ smaller cells.
We use the notation $X(i)_k$ to denote the set of all $k$-cells in $X(i)$. 
If $E$ is a cell of $X(i)$, to simplify notation we will 
write $E_k$ to denote the $k$-skeleton of the cell $E$
(dropping $(i)$) and $E(i')_k$ to denote the $k$-skeleton of $X(i+i') \cap E$.
We write $X^p$ to denote a cubical subcomplex with
cells of maximal dimension $p$.

%For each $q$-dimensional cell $E^q$ of $X(q=i)$ we fix a linear diffeomorphsim
%$J_{E}: E \rightarrow [0,1]^q$.

%Let $X^p$ be a subcomplex of the cube complex
%with faces of dimension at most $p$.
%$X_{q}$ denotes the $q$-skeleton of $X$.
\subsection{Approximation theorem.} \label{sec:approximation theorem}
Let $G = \mathbb{Z}$ or $\mathbb{Z}_p$.
Assume that $M$ is a manifold with piecewise smooth boundary $\partial M$
with $\theta$-corners for some $\theta \in (0, \frac{\pi}{2})$.
Let $\Zk$ denote
the space of relative cycles with coefficients in group $G$.
Let $\mathcal{F}$ denote the flat distance in $\Zk$.

Each individual cycle in a continuous family $\{ F(x) \}$ of flat cycles 
can be well-approximated by a polyhedral cycle.
However, close to every nice looking cycle in the family there could
be a sequence of some wilder and wilder looking wiggly cycles converging to it. For example their supports may converge in Hausdorff topology to the whole space.
This makes it hard to apply some continuous deformations
or surgeries to the family.
It is desirable to have an approximation theorem
which tells us that we can pick a sufficiently fine discrete subset 
of the parameter space, and the values of $F$ on that discrete subset 
uniquely determine the homotopy class of the family, and, moreover, if
we apply some continuous deformation to this discrete family so that it 
satisfies a certain property,
we could interpolate to obtain a continuous family, which also 
satisfies this property. 

This gives rise to discrete and continuous settings in Min-Max Theory
(see e.g. \cite{Pitts}, \cite{MNWil}, \cite{MNRicci}).

The main result of this section is the following approximation theorem.

\begin{theorem} \label{approximation}

(1) There exists $c=c(M,p)>0$, such that
the following holds. For every $\eta>0$ there exists $\varepsilon_0>0$, such that if $F: X^p(q)_0 \rightarrow \Zk$ is a map
with $\mathcal{F}(F(a),F(b)) < \eps_0$ for every pair of adjacent vertices 
$a$ and $b$, then there exists an extension of $F$ to a continuous map 
$F: X \rightarrow \Zk$, such that 
for each cell $C$ in $X(q)$
and $x,y \in C$ we have
$\mathcal{F}(F(x),F(y))< c \eta \sup_{w \in X(q)_0} \M(F(w))  $. 

%(b) $|F(x)| \leq c (\sup_{w \in X_0} |F(w)| + \eps^) $.

(2) There exists $\eps(M,p)>0$, such that if $F_0: X \rightarrow \Zk$ 
and $F_1: X \rightarrow \Zk$ 
are two maps
with $\mathcal{F}(F_0(x),F_1(x)) < \eps$ for all $x$ and $\sup_{x\in X} \M(F(x))< \infty$, $i=0,1$, then $F_0$ and $F_1$ are homotopic.
\end{theorem}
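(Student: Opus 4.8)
The plan is to prove both parts via an interpolation-by-filling scheme, using the isoperimetric bound for flat cycles as the engine. For part (1), the idea is to build the extension of $F$ skeleton by skeleton. Suppose $F$ is already defined and continuous on the $(q'-1)$-skeleton of $X(q)$ with the desired modulus-of-continuity control; to extend over a $q'$-cell $C$, I would use the fact that $\partial C \cong S^{q'-1}$ together with the cone construction in the space of flat cycles: since $F|_{\partial C}$ maps into a small flat ball (its diameter is controlled by the inductive hypothesis), and since $\mathcal{Z}_k(M,\partial M;G)$ is locally contractible with quantitative control — filling a flat $j$-cycle of small diameter by a flat $(j+1)$-chain of comparably small mass, which is exactly the content of the Federer--Fleming / isoperimetric estimate applied in the space of cycles (cf. \cite{Alm}, \cite{MNWil}) — one extends $F$ over $C$ by coning off. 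The loss of a power $\tfrac{1}{n+1}$ in $\eps$ is the standard isoperimetric exponent $\tfrac{j+1}{j}$ iterated through dimensions up to $n$, and the factor $\sup_w \M(F(w))$ enters because the cone inequality over a cycle $z$ contained in a flat ball of radius $\eps$ produces a chain of mass $\lesssim \M(z)\cdot \eps$ when $\eps$ is small relative to $\M(z)^{1/k}$, so the masses along the interpolation stay comparable to the masses at the vertices. Iterating the subdivision $X(q)$ rather than $X$ is what makes $\eps_0$ and $c$ depend on $p$ (the number of cells and the depth of the skeletal induction is controlled by $p$, not by $q$).

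For part (2), given $F_1, F_2 : X \to \Zk$ with $\mathcal{F}(F_1(x),F_2(x)) < \eps$ for all $x$, I would construct the homotopy $G : X \times [0,1] \to \Zk$ directly. Discretize $X \times [0,1]$ as a cubical complex $Y$ fine enough that adjacent vertices have $F$-values within $\eps_0$ (on $X\times\{0\}$ use $F_1$, on $X\times\{1\}$ use $F_2$, and along the $[0,1]$ direction the hypothesis $\mathcal{F}(F_1(x),F_2(x))<\eps$ gives the needed closeness after refining $X$ so that $F_1,F_2$ are themselves $\eps_0$-fine on vertices — this refinement is legitimate because we may first apply part (1) to replace $F_1,F_2$ by homotopic maps that are as fine as we like on a prescribed subdivision). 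Then apply part (1) to this vertex-map on $Y$ to obtain a continuous extension, which is the required homotopy. The only point to check is that the extension restricts to $F_1$ and $F_2$ (up to homotopy) on the two ends — handled by choosing the cone constructions on the boundary faces $X\times\{0\}$ and $X\times\{1\}$ to reproduce the already-continuous maps $F_1$, $F_2$ there, again using part (1) applied to $X$ alone.

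The main obstacle I expect is making the quantitative local contractibility of $\Zk$ uniform and compatible with the mass bounds — that is, proving that a flat $j$-cycle $w$ of flat-norm $\leq \eps$ in $\mathcal{Z}_j(\Zk)$ (a ``cycle of cycles'') can be filled by a $(j+1)$-chain whose constituent cycles have mass bounded by $c(M,p)\cdot \sup \M \cdot \eps^{1/(n+1)}$, uniformly, and that this can be done continuously in the parameter. This requires care with two things: (i) controlling not just flat norm but mass along the interpolation (the cone inequality degrades mass unless the cycle already sits in a very small flat ball, which forces the $\eps^{1/(n+1)}$ rather than $\eps$), and (ii) ensuring the constant depends only on $M$ and $p$ and not on the subdivision parameter $q$, which is what makes the statement usable for later sections. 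I would isolate this as the key technical lemma and prove it by the Federer--Fleming deformation theorem combined with a cone construction, iterated over the $n$ dimensions of $M$, exactly as in the isoperimetric discussion in the introduction but applied one-parameter-at-a-time up the skeleton of $X(q)$.
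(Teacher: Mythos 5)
Your proposal correctly identifies the right shape of the argument (extend skeleton by skeleton, then reuse part (1) on $X \times [0,1]$ for part (2)) and, importantly, it correctly flags where the difficulty actually lies: producing the filling \emph{continuously} in the parameter while controlling the mass of the intermediate cycles. But you then leave that exact step as a "key technical lemma" to be proved by "Federer--Fleming combined with a cone construction," and that is a genuine gap, because there is no canonical cone construction in $\Zk$. Given $F|_{\partial C}$ a sphere of cycles of small flat diameter, one can certainly find, for each fixed $x \in \partial C$, a filling chain $\tau(x)$ with $\partial\tau(x) = F(x) - F(v)$ of small mass; but mass-minimizing (or Federer--Fleming-deformed) fillings do not vary continuously in $x$, so slicing $\tau(x)$ does not produce a path in $\Zk$, and certainly not one whose dependence on $x$ is continuous. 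The cone inequality in $M$ fills individual cycles; it does not fill "cycles of cycles" in $\Zk$.

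The paper's mechanism for overcoming exactly this obstruction is \emph{localization}, and this is the idea missing from your proposal. Before any continuous extension is attempted, the discrete family is perturbed (Proposition \ref{extension}, a delicate iterated cut-and-fill argument over a fixed covering of $M$ by small balls) so that for every cell $C$ and any two vertices $a,b \in C$, the difference $F(a) - F(b)$ is supported in a fixed $\delta$-admissible collection of small, pairwise disjoint, convex sets $\{U_l^C\}$. Once that holds, there \emph{is} a canonical interpolation: radially contract $F(a) - F(b)$ inside each $U_l^C$ (Proposition \ref{discrete to continuous}). This is what makes the family of fillings continuous and gives the mass and flat-distance estimates. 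Your appeal to "quantitative local contractibility of $\Zk$" is also in tension with the logic of the paper: Section \ref{sec:Almgren} uses Theorem \ref{approximation} to give a new proof of the Almgren isomorphism theorem, so invoking local contractibility of $\Zk$ as a known input would be circular in this context. The localization device is the nontrivial content that your sketch names as the obstacle but does not supply.
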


Informally, Theorem \ref{approximation} means that 
we can replace a continuous family of cycles 
with a sufficiently dense (in the parameter space)
discrete family. We can then perturb each of the cycles 
in the discrete family and complete them to a new continuous
family. This new continuous family will be homotopic
to the original family.
In general, our procedure may increase the mass of cycles by 
a constant $c(p,M)$.
In the last subsection we show that if we assume that the family of cycles satisfies
no concentration of mass condition (and without this condition in the case of $0$-cycle), then
the mass will only
increase by an amount that goes to $0$ as $\eps$ goes to $0$.

Our method is to first modify the family so that for
a small ball $B$ in the parameter space and any two points
$x, y \in B$, the difference $F(x) - F(y)$ lies in some fixed
(and depending only on $B$) collection of disjoint
convex sets. We will call such families ``$\delta$-localized". Then we can interpolate between $F(x)$
and $F(y)$ using radial contraction inside these convex sets.

\subsection{Admissible collections of open sets in \texorpdfstring{$M$}{TEXT}} 

Pick $r_0 < injrad (M)$.
%, so that ...
%(special condiration for balls centered on the boundary)
If $M$ has smooth boundary let
$E_{\partial M}: \partial M \times [0,r_0] \rightarrow M$
denote the normal exponential map from the boundary of $M$.
More generally, let $E_{\partial M}$ denote map $E$
from Lemma \ref{lem: boundary with corners}.

We say that a collection of open set $U_l \subset M$
is $r$-admissible if they are all disjoint
and 

1) if $U_l$ is disjoint from $\partial M$,
then $U_l$ is a ball of radius $r_l$;

2) if $U_l$ is not disjoint from $\partial M$,
then $U_l= E_{\partial M}(B_{r_l} \times [0,r_l])$;

3) $\sum r_l < r$.

We need the following two elementary lemmas about collections of $r$-admissible sets.

\begin{lemma} \label{admissible}
Let $\mathcal{B}_1 = \{\beta_i \}$ be a finite set of balls
in $\partial M$ and $\mathcal{B}_2 = \{ B_j \}$ be a finite set of disjoint
balls contained in the interior of $M$.
There exists an $r$-admissible collection
$\{U_l \}$, such that
$\cup U_l \supset (\cup E_{\partial M}(\beta_i \times [0,rad(\beta_i))) \cup (\cup B_j)$
and $r \leq \sum rad(\beta_i) + 3 \sum rad(B_j)$.
\end{lemma}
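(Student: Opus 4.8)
The plan is to build the admissible collection in two stages, treating the boundary balls and the interior balls differently. The (possibly overlapping) balls $\{\beta_i\}$ in $\partial M$ will be \emph{merged} into a pairwise disjoint family with \emph{no} increase in the sum of radii, and their collar thickenings will be disjoint admissible boundary sets. The interior balls, which are already pairwise disjoint, will then be fed in one at a time: as soon as one of them meets one of the collar sets it gets swallowed by that set at the cost of enlarging its radius by at most three times the radius of the interior ball --- this is the only place the factor $3$ enters.

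\textbf{A merging lemma.} First I would isolate an elementary fact in a geodesic metric space $Z$: if two balls $B(x,\rho)$ and $B(x',\rho')$ intersect and neither contains the other, then both are contained in $B(m,R)$ with $R=\frac{1}{2}(\rho+\rho'+d(x,x'))$ and $m$ the point at distance $\frac{1}{2}(d(x,x')+\rho'-\rho)$ from $x$ along a minimizing geodesic to $x'$; this is a direct triangle-inequality computation, and the key point is $R<\rho+\rho'$, which holds precisely because $d(x,x')<\rho+\rho'$. Iterating (induction on the number of balls: if the family is pairwise disjoint, stop; otherwise replace two overlapping members by the ball $B(m,R)$ above, which decreases the number of balls and does not increase the sum of radii) converts any finite family of balls in $Z$ into a pairwise disjoint family with the same union, each original ball contained in a new one, and sum of radii no larger than before. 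I apply this in $Z=\partial M$ with its length metric (which is geodesic on each connected component, and two overlapping balls lie in one component) to get a disjoint family $\{\beta_l'\}$ with $\sum_l\mathrm{rad}(\beta_l')\le\sum_i\mathrm{rad}(\beta_i)$. Because the collar map $E_{\partial M}$ of Lemma~\ref{lem: boundary with corners} is injective, the sets $V_l:=E_{\partial M}(\beta_l'\times[0,\mathrm{rad}(\beta_l')])$ are pairwise disjoint admissible boundary sets, and $\beta_i\subseteq\beta_l'$ with $\mathrm{rad}(\beta_i)\le\mathrm{rad}(\beta_l')$ gives $\bigcup_i E_{\partial M}(\beta_i\times[0,\mathrm{rad}(\beta_i)))\subseteq\bigcup_l V_l$.

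\textbf{Absorbing the interior balls.} Starting from $\mathcal V=\{V_l\}$ and $\mathcal W=\{B_j\}$, I run the loop: if every ball of $\mathcal W$ is disjoint from every set of $\mathcal V$, stop; otherwise pick $B_j=B(z_j,s_j)\in\mathcal W$ and $V=E_{\partial M}(B_{\partial M}(q,\rho)\times[0,\rho))\in\mathcal V$ that intersect. Choose any $w\in V\cap B_j$; then every $p\in B_j$ satisfies $d(p,w)<2s_j$ (both are within $s_j$ of $z_j$), so, $E_{\partial M}$ being $(1+\eps)$-bilipschitz onto its image, the collar coordinates of $p$ and $w$ differ by less than $3s_j$; since $w=E_{\partial M}(a,s)$ with $a\in B_{\partial M}(q,\rho)$ and $s<\rho$, this yields $B_j\subseteq E_{\partial M}(B_{\partial M}(q,\rho+3s_j)\times[0,\rho+3s_j))$. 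Replace $V$ in $\mathcal V$ by this enlarged collar set, delete $B_j$ from $\mathcal W$, and re-apply the merging step to the underlying $\partial M$-balls so that $\mathcal V$ is pairwise disjoint again. Each pass removes a ball from $\mathcal W$, so the loop terminates; at the end $\mathcal V\cup\mathcal W$ is pairwise disjoint (the $\mathcal V$-sets by the merging, the $\mathcal W$-balls among themselves --- they were never altered --- and from $\mathcal V$ by the stopping rule), admissible (each surviving $B_j$ is disjoint from $\partial M$ since $B_j\subset\mathrm{int}\,M$), and covers the required union, since every step only enlarges $\bigcup(\mathcal V\cup\mathcal W)$. For the radius budget, merging never increases the total and each absorbed $B_j$ adds at most $3\,\mathrm{rad}(B_j)$ to the $\mathcal V$-total; hence the final $\mathcal V$-total is at most $\sum_i\mathrm{rad}(\beta_i)$ plus $3$ times the sum of $\mathrm{rad}(B_j)$ over absorbed $j$, and the final $\mathcal W$-total is the sum of $\mathrm{rad}(B_j)$ over the remaining $j$, so altogether $\sum_l r_l\le\sum_i\mathrm{rad}(\beta_i)+3\sum_j\mathrm{rad}(B_j)$, which is the asserted bound on $r$.

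\textbf{Main obstacle.} I expect the delicate part to be the absorption estimate and the associated bookkeeping: verifying that a collar set meeting an interior ball $B_j$ can be enlarged to contain $B_j$ while its radius grows by only $O(\mathrm{rad}(B_j))$ --- this is what the bilipschitz control of $E_{\partial M}$, which comes from the $\theta$-corner hypothesis via Lemma~\ref{lem: boundary with corners}, is for --- and checking that the re-merging forced by each enlargement does not destroy this estimate. One also has to fix at the outset the appropriate smallness conditions (all scales well below $r_0<\mathrm{injrad}(M)$, and small enough that every collar set produced has radius below $r_0$, so it lies in the image of $E_{\partial M}$ and is a genuine admissible set); these hold in the intended applications but need to be recorded.
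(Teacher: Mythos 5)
Your proof is correct and follows essentially the same approach as the paper's: merge overlapping boundary balls into a disjoint family with no increase in the sum of radii, then absorb one at a time any interior ball that meets a collar set (enlarging the underlying boundary ball's radius by at most $3\,\mathrm{rad}(B_j)$, which is where the factor $3$ enters in both accounts), re-merging after each absorption, and terminating because each pass removes one ball from $\mathcal{B}_2$. The paper's operations (*) and (**) are precisely your merging and absorption steps; you simply record the elementary merging estimate and the bilipschitz-collar estimate a bit more explicitly than the paper does.
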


\begin{proof}
If $\beta_{i_1}$ and $\beta_{i_2}$ intersect,
then we can find a ball $\beta \subset \partial M$ of radius $\leq rad(\beta_{i_1}) + rad(\beta_{i_2})$
that contains their union. 
Hence, we may replace $\{\beta_i \}$ with a collection of disjoint balls
containing the union of original balls without increasing the
sum of their radii. Let's call this replacement operation (*).

If $B_j $ intersects $E_{\partial M}(\beta_i \times [0,rad(\beta_i)])$, then 
there exists $\beta'$ of radius $\leq rad(\beta_i)+ 3 rad(B_j)$,
so that $B_j \subset E(\beta' \times [0,rad(\beta')])$.
(Note that instead of factor $3$ we could've used $2+\eps$
for $\eps \rightarrow 0$ as $rad(B_j) \rightarrow 0$.)
We define new sets $\mathcal{B}_1' = \mathcal{B}_1 \setminus \{ \beta_i \} \cup \{ \beta' \}$
and $\mathcal{B}_2' =  \mathcal{B}_2 \setminus  \{ B_j \}$.
By (*) we can replace $\mathcal{B}_1'$ with a collection of disjoint ball
$\mathcal{B}_1''$.
Let (**) denote the operation of replacing $\mathcal{B}_1$ and $\mathcal{B}_2$
with $\mathcal{B}_1''$ and $\mathcal{B}_2'$.
Note that (**) decreases the number of balls in $\mathcal{B}_2$
by $1$, decreases the sum of radii of balls in $\mathcal{B}_2$ by $rad(B_j)$ and increases 
the sum of radii of balls in $\mathcal{B}_1$ by $3 rad(B_j)$.

We perform (**) repeatedly, until $E_{\partial M}(\beta_i \times [0,rad(\beta_i)])$ and  $B_j$ are all disjoint.
The process terminates since the number of balls in $\mathcal{B}_2$
was finite.
\end{proof}

\begin{lemma} \label{two admissible}
Let $\{U^1_l\}_{l=1} ^{v_1}$ be an $r_1$-admissible
and $\{U^2_l\}_{l=1}^{v_2}$ be an $r_2$-admissible 
collection of open sets.
 Then there exists an $r$-admissible
collection $\{ U_l \}_{l=1}^{v}$ with $v\leq v_1+v_2$, $\sqcup U^1_l \cup \sqcup U^2_l \subset \cup U_l$
and $r \leq 3  (r_1 + r_2 ) $.
\end{lemma}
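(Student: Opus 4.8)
The plan is to mimic the proof of Lemma~\ref{admissible}: beginning with the disjoint union of the two collections, I repeatedly merge a pair of sets that intersect into a single admissible set, until what remains is pairwise disjoint, keeping track of the number of sets (to bound $v$ and to see that the process stops) and of the total of the radii (to obtain the factor $3$).

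First I would split each given collection into its interior balls and its boundary sets $E_{\partial M}(B_{r_l}\times[0,r_l])$, and write $I$ (resp.\ $B_0$) for the sum of the radii of all interior balls (resp.\ of all boundary sets) appearing in either collection, so that $I+B_0=\sum_l r_l^1+\sum_l r_l^2<r_1+r_2$. Within each given collection the sets are disjoint, so every intersection pairs a set of the first collection with one of the second; I fix a small $\eps>0$ once and for all and use the $(1+\eps)$-bilipschitz collar map $E_{\partial M}$ of Lemma~\ref{lem: boundary with corners}. I would then use three merge moves. \emph{(A)} Two intersecting interior balls $B(c_1,\rho_1)$, $B(c_2,\rho_2)$ are replaced by the ball $B(m,R)$ centered at the point $m$ of the segment $c_1c_2$ with $|m-c_1|+\rho_1=|m-c_2|+\rho_2=:R$; since $|c_1-c_2|\le\rho_1+\rho_2$ one has $R=\tfrac{1}{2}(|c_1-c_2|+\rho_1+\rho_2)\le\rho_1+\rho_2$. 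If $B(m,R)$ is disjoint from $\partial M$ it stays an interior ball; otherwise $\mathrm{dist}(m,\partial M)<R$, and in collar coordinates the projection of $B(m,R)$ to $\partial M$ has radius $\le(1+\eps)R$ while the $t$-coordinates of points of $B(m,R)\cap M$ are $\le(1+\eps)\,2R$, so $B_1\cup B_2\subset E_{\partial M}(\beta\times[0,s])$ with $s\le 2(1+\eps)(\rho_1+\rho_2)$. \emph{(B)} Two intersecting boundary sets $E_{\partial M}(\beta_1\times[0,s_1])$, $E_{\partial M}(\beta_2\times[0,s_2])$ have, by injectivity of $E_{\partial M}$, intersecting base balls in $\partial M$, and are replaced by $E_{\partial M}(\beta\times[0,s])$ where $\beta\subset\partial M$ is the geodesic ball produced by the same midpoint construction inside $\partial M$, of radius $s\le(1+\eps)(s_1+s_2)\ge\max(s_1,s_2)$. \emph{(C)} An interior ball $B_j$ meeting a boundary set $E_{\partial M}(\beta_i\times[0,r_i])$ is absorbed exactly as in Lemma~\ref{admissible}: there is a boundary ball $\beta'$ with $\mathrm{rad}(\beta')\le\mathrm{rad}(\beta_i)+(2+\eps)\,\mathrm{rad}(B_j)$ whose associated boundary set contains both $E_{\partial M}(\beta_i\times[0,r_i])$ and $B_j$.

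Next I would run the obvious process: while the current collection contains an intersecting pair, pick one and apply the applicable move. Each move turns two sets into one, so after at most $v_1+v_2$ steps the process terminates with a pairwise disjoint collection $\{U_l\}_{l=1}^v$, $v\le v_1+v_2$; each $U_l$ is either an interior metric ball or of the form $E_{\partial M}(B\times[0,\mathrm{rad}))$, and every move outputs a set containing both of its inputs, so conditions (1)--(2) of $r$-admissibility hold and $\bigsqcup U_l^1\cup\bigsqcup U_l^2\subset\bigcup U_l$. For condition (3) I would charge each original set to the unique final set into which it flows: a final interior ball results from iterating move~(A) in its first case and has radius at most the sum of the radii of the original interior balls feeding into it; a final boundary set has radius at most the sum of the radii of the original boundary sets in its history plus $(2+\eps)$ times the sum of the radii of the original interior balls in its history (this last factor arising either from move~(C) or from the second case of move~(A)). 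Summing over all final sets and splitting $I=I_1+I_2$ according to whether an interior ball does or does not end up inside a boundary set, one gets $\sum_l\mathrm{rad}(U_l)\le B_0+(2+\eps)I_1+I_2<3(B_0+I)<3(r_1+r_2)$, so the collection is $r$-admissible with $r=3(r_1+r_2)$.

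I expect the only real point of care to be the second case of move~(A): replacing the merge of two interior balls by a genuine admissible boundary set whose radius is still at most a fixed multiple ($<3$) of $\rho_1+\rho_2$, which is handled by projecting in the collar coordinates of Lemma~\ref{lem: boundary with corners} with $\eps$ small. One must also note that all boundary sets produced remain inside the collar $E_{\partial M}(\partial M\times[0,r_0))$ and all interior balls inside the injectivity radius, both automatic once $3(r_1+r_2)$ is smaller than $r_0$ and than $\mathrm{injrad}(M)$ — the case in all intended applications. Every other move either leaves the radius sum unchanged up to the harmless $(1+\eps)$ factor or repeats verbatim the estimate already proved in Lemma~\ref{admissible}.
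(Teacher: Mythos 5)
Your argument is sound in spirit and reaches the right conclusion, but it diverges from the paper's proof in a way worth noting: the paper does \emph{not} iterate an arbitrary-order merging process. It first merges all interior balls from both collections into a disjoint family (radius sum non-increasing), then converts the ones that now touch $\partial M$ into collar sets with factor $\leq 3$, and finally hands the resulting $\mathcal{B}_1$, $\mathcal{B}_2$ directly to Lemma~\ref{admissible} as a black box. You instead re-derive everything via three ad hoc moves applied in any order, which works but requires you to track the radius budget through an arbitrary merge tree. The paper's organization is cleaner precisely because it avoids that bookkeeping by sequencing the moves and reusing Lemma~\ref{admissible}; your version is more self-contained but pays for it in accounting.

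One imprecision in your accounting: the factor $(1+\eps)$ you insert in move~(B) is unnecessary, and as written it is mildly dangerous. If two boundary sets $E_{\partial M}(\beta_1\times[0,s_1])$ and $E_{\partial M}(\beta_2\times[0,s_2])$ intersect, injectivity of $E_{\partial M}$ forces $\beta_1\cap\beta_2\neq\emptyset$, and the intrinsic midpoint construction \emph{inside} $\partial M$ already gives a geodesic ball $\beta$ with $\mathrm{rad}(\beta)\leq s_1+s_2$ containing $\beta_1\cup\beta_2$ (and $\geq\max(s_1,s_2)$ so the collar depth is right) with no bilipschitz constant entering at all --- this is exactly the step in Lemma~\ref{admissible} labelled (*). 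Keeping the $(1+\eps)$ means your final boundary-set radii acquire a factor $(1+\eps)^{k}$ where $k$ is the number of move-(B) steps on the branch, and since you ``fix $\eps$ once and for all'' rather than in terms of $v_1+v_2$, your closing inequality $\sum_l\mathrm{rad}(U_l)\leq B_0+(2+\eps)I_1+I_2$ is not literally what the moves produce. Dropping the $(1+\eps)$ from move~(B) (and writing $2(1+\eps)$, not $(2+\eps)$, for the move-(A)-second-case contribution, both of which are $<3$ for $\eps<\tfrac12$) makes the charging argument tight and the claimed $r\leq 3(r_1+r_2)$ follows.
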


\begin{proof}
We replace the collection of all 
$\{ U^1_{l_i} \}$ and $U^2_{l_j}$, which are balls in the interior of
$M$, with a collection $\{ B_i \}$ of disjoint balls that contain their union. 
We can do this with $\sum rad(B_i) \leq \sum rad(U^1_{l_i}) + \sum rad(U^2_{l_j})$.

Some of balls $B_i$ may intersect $\partial M$. We replace them with 
$exp(\beta_i \times [0, r']) \supset B_i$ for $\beta_i \subset \partial M$
and $r' \leq 3 rad(B_i)$.

Now we are in the situation where we can apply Lemma \ref{admissible}.
\end{proof}

\subsection{Localized families}

%We will say that a map $F: X(q)_0 \rightarrow \Zk$
%is $\eps$-fine if for every two adjacent vertices $a$, $b$ we have
%$\mathcal{F}(F(a),F(b)) \leq \eps$.
Let $V \subset X(q)$.
We will say that a map $F: V \rightarrow \Zk$
is $\eps$-fine if for every cell $C$ in $X(q)$ and $a,b \in C \cap V$ we have
$\mathcal{F}(F(a),F(b)) \leq \eps$.
%We will say that a map $F: X(q)_0 \rightarrow \Zk$
%is $\delta$-localized if for every two adjacent vertices $a$, $b$
%we have $F(a) - F(b) \subset \bigcup U_l$
%for some $\delta$-admissible collection of open sets $\{U_l \}$.
% and a $(k+1)$-chain 
%$Q \subset \bigcup U_l$ with $\partial Q = F(a) - F(b)$
%and $|Q| = \mathcal{F}(F(a),F(b))$.
We will say that a map $F: V \rightarrow \Zk$
is $\delta$-localized if for every cell $C$ of $X(q)$
there exists a $\delta$-admissible collection of open sets $\{U_l^C \}$,
such that for every pair $a, b \in C \cap V$
we have the support $supp(F(a) - F(b)) \subset \bigcup U_l^C$.
Given a cell $C$ in $X(q)$ let $C_0= C \cap X(q)_0$.

\begin{proposition} \label{discrete to continuous}
There exists $\delta_0(M,p)>0$, $c(n, k,p) >0$ and $\eps_0(M,p)>0$ with the following property.
Let $F: X^p(q)_0 \rightarrow \Zk$
be a $\delta$-localized map, $\delta < \delta_0$.

I. There exists an extension
map $F:  X(q) \rightarrow \Zk$,
such that

(1) the extension is $c \delta$-localized;

(2) for every cell $C$ and $x,y \in C$
we have $$\mathcal{F}(F(x),F(y))< c \delta \max_{w \in C_0} \M(F(w) \llcorner \cup U^C_l)$$

(3) for every cell $C$ and $x \in C$ we have
 $$\M(F(x)) \leq \max_{w \in C_0} \M(F(w)) + c \max_{w \in C_0} \M( F(w) \llcorner \cup U^C_l)$$

II. Suppose in addition that $F$ is defined on some subcomplex
$Y(q) \subset X(q)$ and $F: Y \rightarrow \Zk$ is $\delta$-localized
and $\eps$-fine, $\delta< \delta_0, \eps< \eps_0$.
Then there exists an extension
map $F:  X(q) \rightarrow \Zk$,
such that

(1) the extension is $c \delta$-localized;

(2) for every cell $C$ in $X(q)$ and $x,y \in C$
we have $$\mathcal{F}(F(x),F(y))< c \delta \sup_{w \in C_0 \cup (C \cap Y)} \M(F(w))$$
and $\M(F(x)) \leq c \max_{w \in C_0 \cup (C \cap Y)} \M(F(w))$.

\end{proposition}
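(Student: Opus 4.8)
\emph{Overall strategy.} The plan is to build the extension one skeleton at a time, working cell by cell, as in the interpolation lemmas of Pitts and Marques--Neves but in the $\delta$-localized form. Suppose $F$ has already been extended continuously over the boundary $\partial C$ of a $j$-cell $C$ of $X(q)$ (for $j=0$ this is the given data), with all the asserted bounds. Fix a vertex $v_*$ of $C$ and view $C$ as the cone $\partial C * \{v_*\}$, i.e.\ choose cone coordinates $(y,s)\in \partial C\times[0,1]$ with $s=1$ the cone point $v_*$. We define $F$ on $C$ by letting $F(y,s)$ be the value at "time $s$" of a canonical path $\gamma_{y}$ in $\Zk$ from $F(y)$ (at $s=0$) to $F(v_*)$ (at $s=1$), chosen to depend continuously, in the flat topology, on $y$. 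Because $F|_{\partial C}$ is already $\delta$-localized with the collection $\{U^C_l\}$, the difference $F(y)-F(v_*)$ is supported in $\bigcup_l U^C_l$ for every $y$, so all these paths can be produced simultaneously inside the \emph{same} disjoint family of model sets, which is what makes the glued map on $C$ continuous and localized.

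\emph{The elementary interpolation.} The core is a construction that, given two relative cycles $z_0,z_1\in\Zk$ with $z_1-z_0$ supported in a $\delta$-admissible collection $\{U_l\}$, produces a flat-continuous path from $z_0$ to $z_1$. Each $U_l$ is, by admissibility and the $\theta$-corner hypothesis, $(1+\eps)$-bilipschitz to a round ball (when $U_l$ is interior) or to a collar region $E_{\partial M}(B_{r_l}\times[0,r_l])$ adapted to $\partial M$ via Lemma~\ref{lem: boundary with corners} (when $U_l$ meets $\partial M$). Inside each $U_l$ we perform a \emph{radial contraction}: in the ball model, contract $z_l:=(z_1-z_0)\llcorner U_l$ toward the center; in the collar model, push it along $E_{\partial M}$ into $\partial M$. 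Filling the contracting family by the associated cone/prism chain $Q_l$ (supported in $U_l$, possibly with boundary on $\partial M$ in the collar case), the path is $t\mapsto z_0+\partial\big(Q_l\llcorner\{\mathrm{radius}\le t r_l\}\big)$ summed over $l$. The isoperimetric cone inequality gives $\M(Q_l)\le c(n)\,r_l\,\M(z_l)$, so along the path $\mathcal{F}(\cdot,z_0)\le\sum_l\M(Q_l)\le c(n)\,\delta\,\max_l\M(z_l)$, while the coarea formula (Theorem~\ref{classical coarea}) lets us choose the radii at which the slices $\langle Q_l,\mathrm{dist},\cdot\rangle$ have mass $\lesssim\M(z_l)$, keeping the mass along the path below $\M(z_0)+c(n)\max_l\M(z_l)$; continuity in $z_1$ follows because the cone chains depend continuously on their boundaries. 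Taking $z_0=F(v_*)$, $z_1=F(y)$ and noting $\M(z_l)\le 2\max_{w\in C_0}\M(F(w)\llcorner\bigcup U^C_l)$ yields (2) and (3) on $C$ with the stated constants, and (1) because every cycle on the path differs from $F(v_*)$ only inside $\bigcup U^C_l$; passing to the subcells of $X(q)$ multiplies the localizing radius by a bounded factor at each of the at most $\sim 3^{p}$ combinations, handled by Lemmas~\ref{admissible} and \ref{two admissible}, so the extension stays $c(n,k,p)\delta$-localized.

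\emph{Relative subtleties and Part II.} The genuinely delicate point is that a relative $k$-cycle in an interior ball need not be a boundary there, so contraction toward the center does not literally kill $z_l$: the construction therefore fills $z_l$ only modulo $\partial U_l$ (resp.\ modulo $\partial M$ in the collar case), which is precisely why the localization collection of the extension must be allowed to be an enlargement of the original one, the radius slack being absorbed through Lemma~\ref{two admissible}; in the collar case relative cycles sitting near $\partial M$ are simply absorbed into $\partial M$ along $E_{\partial M}$. Part~II is the same induction, except that over a cell $C$ meeting the prescribed subcomplex $Y$ one interpolates between the inductively built values on $\partial C$ and the given values on $C\cap Y$: cone first toward a vertex of $C_0\cup(C\cap Y)$, then over the sub-cell adjacent to $Y$ use the $\eps$-fineness of $F|_Y$ to bound the remaining correction; this replaces $\max_{w\in C_0}$ by $\sup_{w\in C_0\cup(C\cap Y)}$ throughout and forces the weaker, non-$\delta$-weighted mass bound on that sub-cell, since there the localizing radii are no longer controlled by $\delta$ alone. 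I expect the main technical obstacle to be the bookkeeping of the radius budget across the $O(3^{p(i-j)})$ subcells — keeping the accumulated constant a function of $n,k,p$ only and independent of the subdivision parameter $i$ — together with the careful verification of flat-continuity of the glued map across cell interfaces.
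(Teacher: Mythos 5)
Your proposal follows essentially the same path as the paper's proof: cell-by-cell induction over the skeleta of $X(q)$, merging the admissible collections of the boundary faces via Lemma~\ref{two admissible} into a single $c(m)\delta$-admissible collection $\{U^C_l\}$ for the $m$-cell $C$, and then coning $F|_{\partial C}$ to a chosen vertex value by radial contraction inside each $U^C_l$ (with the collar contraction $E(x,r)\mapsto E(tx,tr)$ for the sets touching $\partial M$). The paper writes the interpolating family directly as a pushforward $\sigma(x,t)=F(x)+\sum_i R^i_t\bigl((F(v(C))-F(x))\llcorner U^C_i\bigr)$, while you phrase the same geometry through the cone chain $Q_l$ and its radial sublevel sets; these are equivalent formulations, and your flat-distance bound via the cone inequality matches the paper's. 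Part~II via interpolating toward $C\cap Y$ is also what the paper intends.

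A few small points. First, the appeal to the coarea formula for the mass bound is both unnecessary and, as stated, does not do what you want: coarea controls the slice at \emph{some} good radius, but the mass bound must hold along the \emph{entire} contraction. It holds anyway because $R^i_t$ is $1$-Lipschitz, so $\M(R^i_t(z_l))\le\M(z_l)$ for every $t$ — no coarea is needed, and the paper's pushforward phrasing makes this automatic. Second, your worry that ``a relative $k$-cycle in an interior ball need not be a boundary there'' is largely a red herring for $k\ge 1$: if $U_l$ is disjoint from $\partial M$ and $\mathrm{supp}(F(y)-F(v_*))\subset\bigcup U_l$ with the $U_l$ disjoint open sets, then the piece $z_l$ is an \emph{absolute} cycle (its boundary, being supported in $\partial M$ and in $\overline{U_l}$, vanishes), hence a boundary in the ball; the genuine exception is $k=0$, which the paper handles separately in Proposition~\ref{approximation_0}. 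Third, the ``radius budget across $O(3^{p(i-j)})$ subcells'' concern is misplaced: the induction is only over the $p+1$ skeletal dimensions of the \emph{fixed} complex $X(q)$, combining the boundedly many (depending only on $p$) boundary-face collections at each step, which is how the constant stays a function of $n,k,p$ alone. None of these imprecisions affects the soundness of the overall construction, which matches the paper's.
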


\begin{proof}
We prove I. Part II follows with some straightforward modifications.

We start by extending to 1-skeleton.
%Let $E$ be a 1-cell of $X(q)$
%and $\mathcal{B}(E) = \{U_1^E, ... , U_{l(E)}^E \}$
%be a $\delta$-admissible collection with 
%$F(x)-F(y)$ supported in $\cup U_i^E$, $x,y \in \partial E$.

%For each $i$ we define contraction maps
%$R^i_t$ as follows.
%If $U_i^E=B_{r_i}(p)$ is a ball centered at $p$ we let $R^i_t: B_{r_i}(p_i) \rightarrow B_{t r_i}(p_i)$ be the radial contraction map.
%If $U_i^E=\exp_{\partial M}(\beta_{r_i}(p_i) \times [0,r_i] )$ 
%we define $R_t^i(\exp_{\partial M}(x, r)) = \exp_{\partial M}(x,tr)$.

%For $t \in [0,1]=E$ define 
%$$F(t) = F(y) +  \sum_i R_{t}^i ((F(x) - F(y)) \llcorner B_i^E)$$

Let $C$ be an $m-$dimensional cell.
Assume we have extended $F$ for all $x \in \partial C$.
For each $(m-1)$-dimensional cell $C' \subset \partial C$
assume, inductively, that there exists
a $c(m-1) \delta$-admissible collection
$\mathcal{B}(C') = \{U_1^{C'}, ... , U_{l(C')}^{C'} \}$
with $supp(F(x) - F(y)) \subset \cup U_i^{C'} $ 
for all $x, y \in C'$.

By Lemma \ref{two admissible} there exists $c(m) >0$ and
a
$c(m) \delta$-admissible collection of open sets 
$\mathcal{B}(C) = \{U_1^{C}, ... , U_{l(C)}^{C} \}$,
such that 
$$ \cup_{i, C' \subset \partial C} U_i^{C'}
\subset \cup_i U_i^C$$

For each $i$ we define contraction maps
$R^i_t$ as follows.
If $U_i^C=B_{r_i}(p)$ is a ball centered at $p$ we let $R^i_t: B_{r_i}(p_i) \rightarrow B_{t r_i}(p_i)$ be the radial contraction map.
If $U_i^C=E(\beta_{r_i}(p_i) \times [0,r_i] )$ 
we define $R_t^i(E(x, r)) = E(tx,tr)$

%For each $U_i^C$ define corresponding contraction map
%$R_{t}^i$.

Let $v(C)$ denote the vertex in $0$-skeleton
of $C$ that is closest to $0$ (thinking of
$C$ as the subset of the unit cube).

Define 
function $\sigma(x,t)$ on $\partial C \times [0,1]$ by 
$$\sigma (x,t) = F(x) + \sum_i R_{t}^i ((F(v(C)) - F(x)) \llcorner U_i^C)$$
Since $\sigma(x,1) = F(v(C))$ for all $x \in \partial C$
this gives a well-defined extension of $F$
to $C$.

Now we would like to verify the bounds on $\F$ and 
the mass.
By taking a cone over $F(x) - F(y) \subset \cup U^C_l$
we verify
$\F(F(x), F(y)) \leq \delta (\M(F(x) \llcorner \cup U^C_l)+\M(F(y)\llcorner \cup U^C_l))
\leq c(p) \delta \max_{w \in C_0} \M(F(w)  \llcorner \cup U^C_l) $.
Inductively we can check that 
$$\M(F(x)) \leq \max_{w \in C_0} \M(F(w)) + c(p) \max_{w \in C_0} \M (F(w)  \llcorner \cup U^C_l)$$
for sufficiently large $c(p)$.
\end{proof}

The following proposition is the key technical result necessary for the proof of
Theorem \ref{approximation}. It asserts that every sufficiently fine (in the flat topology)
discrete family of cycles can be approximated by a $\delta$-localized
discrete family of cycles. 
%In fact, if $M$ is a PL manifold, the we may assume that the approximating cycles
%are polyhderal. Let $\mathcal{Z}_k^{PL}(M, \partial M;G) $ denote the subset of
%$\Zk$ consisting of polyhedral relative cycles.

\begin{proposition} \label{extension}
Given $\delta >0$, there exists $\eps_0(p,M,\delta)>0$
with the following property.
Let $f: X^p(q)_0 \rightarrow \Zk$
be a map 
with
$\mathcal{F}(f(x),f(y))< \eps$
for any two adjacent vertices $x$ and $y$ and $\eps \in (0, \eps_0)$.

Then there exists $\tilde{q}(p,M, \delta)$ and a map
%$F:  X(\tilde{q})_0 \rightarrow \mathcal{Z}_k^{PL}(M, \partial M;G)$,
$F:  X(q+\tilde{q})_0 \rightarrow \Zk$
that coincides with $f$ on $X(q)_0$,
such that for every cell $C$ in $X(q)$ we have
\begin{enumerate}
    \item $\mathcal{F}(f(x),F(y))<
c(M,p, \delta) \eps$ for all $x \in C \cap X(q)_0$
and $y \in  C \cap X(q+\tilde{q})_0$;
%$\mathcal{F}(F(a),F(b))< c (1 + \max_{c \in C_0} |F(c)|) \delta$
%\frac{\eps}{\delta^{4np}}$
%p \eps $ 
\item
$\M(F(y)) \leq 
%\max_{w \in C_0} |F(w)| +c(M,p) \frac{\eps}{\delta^{4np}}$
\sup_{w\in C \cap X(q)_0} \M(f(w)) + c(M,p,\delta)\eps$
for $y \in  C \cap X(q+\tilde{q})_0$;
\item $F$ is $\delta$-localized, moreover, if $E \subset C$ is a cell of $X(q+\tilde{q})$, then for the 
corresponding $\delta$-admissible collection $\{U^E_i\}_{i=1}^{n(p)}$ we have   $\M(F(y) \llcorner \bigcup U^E_i) \leq \sup_{w \in C \cap X(q)_0} \M(f(w)\llcorner \bigcup U^E_i)$.
%c \frac{\eps}{\delta^{6np}}$.
 %+ c \frac{\eps}{\delta^{n+1}}$.
\end{enumerate}
%the extension $F:  X(\tilde{q})_0 \rightarrow \Zk$ is $\delta$-localized;
%Moreover, if $M$ is a PL manifold, then we may assume that 
%$F:  X(\tilde{q})_0 \rightarrow \ZkPL$ and if, in addition, $f(x)$
%is polyhedral, then $F(x)=f(x)$ on $X(q)_0$.
\end{proposition}

\begin{proof}
%Note that we can always 
%choose a PL structure on $M$ that is $(1+\eps')$-bilipschitz approximation
%of the Riemannian metric.
First we define $F$ on $X(q)_0$ setting
$F(x) = f(x)$.

We will define a sequence of numbers $q=q_0 < ... < q_p= \tilde{q}$
and maps 
$F_j : X(q+q_j)_0 \cap X(q)_j \rightarrow \Zk$ for $j = 0, ..., p$.
$F_p: X(q+\tilde{q})_0 \rightarrow \Zk$ will be the desired localized 
extension.

%Construction of $F_1$ is fairly straightforward, but the extension to higher dimensional skeleton is technical. 
We organize our proof us follows. First, we collect all definitions
that will be needed in the proof. 
%Then we describe how $F_1$ is constructed.
%, $F_2$ and $F_3$ 
%as well as some auxiliary functions $\sigma_C$ 
%($dim(\partial C) = 1$ or $2$) 
%are constructed, and 
Then we prove a key lemma constructing a 
localized family of chains filling a given localized family of cycles.
Finally,
we give a proof of the inductive step of the construction.

\vspace{0.2in}

\textbf{Definitions and notation}

\begin{itemize}
    \item Let $r_1(M)$ be sufficiently small, so that
    every ball of radius $r_1$ that lies in $M$
is $1.01$-bilipschitz diffeomorphic to an $r_1$-ball in $\R^{n-1} \times \R_{\geq0}$,
    and for every ball $\beta(r_1)$ in $\partial M$ we have that
    $E_{\partial M}$ is a $1.01$-bilipschitz diffeomorphism on $\beta(r_1) \times [0,r]$.
\item Let $r_0(M, \delta) = \min\{\frac{1}{2}r_1,  \delta\}$ and consider
 a collection of sets
$\{B_1, ..., B_N \} $ covering $M$, such that
each $B_i$ is either a closed ball of radius $r_0$,
%contained in the interior of $M$ and $1.01$-bilipschitz diffeomorphic to $r_0$-ball in $\R^n$,
or $B_i = E_{\partial M}(\beta_i, [0,r_0])$,
where $\beta_i$ is a closed ball of radius $r_0$ in $\partial M$.
%and $E_{\partial M}$ is a  $1.01$-bilipschitz diffeomorphism on $B_i$.
We will call such sets ``generalized balls''.
Note that we can choose covering $\{B_i\}$, so that the number $N$ of generalized balls in the covering 
satisfies $N \leq c(M) \frac{1}{ \delta^n}$.
Let $t B_i$, $t>0$, denote the concentric generalized ball with the same 
center point and the radius $tr_0$.

%    \item $q' = \lfloor \log_3(N) \rfloor +1$.
    
 %   \item $q_0 = q$ and $q_i = q + q' + 2i-1$ for $i = 1, ..., p$.
    
%    \item In calculations below let $c$ or %$c(M,k,p)$ denote a constant that only depends
%on $M$, $p$ and $k$. The value of $c$ may change from line to line.
\end{itemize}    

Given a chain $\tau$ by the coarea inequality Theorem \ref{classical coarea} there exist 
generalized balls $B_i'$, $ B_i \subset B_i' \subset 2 B_i$, such that 
\begin{equation} \label{eq:coarea}
    \M(\tau \llcorner \partial B_i') \leq \frac{2}{r_0} \M(\tau \llcorner 2 B_i)
\end{equation}
It will be convenient to use $\{ B_i' \}$ to define a collection of sets with disjoint interiors as follows:
we let $U_1 = B_1'$ and let $U_i$ be the closure of $ B_i' \setminus \bigcup_{j=1}^{i-1} B_j'$ for $i>1$.
We will say that $\{U_i\}$ is a coarea covering for $\tau$.
%if $B_i \subset B_i' \subset 2B_i$ satisfies (\ref{eq:coarea}) for each $i$.
Note that $U_i \subset B_i' \subset 2 B_i$ for each $i$, and 
$$\M(\tau \llcorner \bigcup_{i=1}^N \partial U_i) \leq \M(\tau \llcorner \bigcup_{i=1}^N \partial B_i') \leq \frac{2N}{r_0} \M(\tau) $$

More generally, given a finite collection $\{\tau_j\}_{j=1} ^{L}$, applying coarea inequality to $\sum \tau_j$, there exists
a collection of generalized balls 
$B_i'$, $ B_i \subset B_i' \subset 2B_i$, such that for each $j$ we have
\begin{equation} \label{eq:coarea2}
    \M(\tau_j \llcorner \partial B_i') \leq \frac{2L}{r_0} \max_{i=1,..,L} \M(\tau_i \llcorner 2 B_i)
\end{equation}
As above we set $U_1 = B_1'$ and $U_i = B_i' \setminus \bigcup_{j=1}^{i-1} B_j'$ for $i>1$
and  call $\{ U_i\}$ a coarea covering for $\{\tau_j \}_{i=1} ^{L}$. We then have 
\begin{equation} \label{eq:coarea3}
    \M(\tau \llcorner \bigcup_{i=1}^N \partial U_i) \leq \frac{2NL}{r_0} \max_{j=1,..,L} \M(\tau_j )
\end{equation}

\begin{itemize}
    
    \item 
    It will be convenient to define a way of chopping away 
a chain $\tau_j$ by intersecting it with sets
from the coarea covering $\{U_i\}$ of $\tau_j$.
%it with some ball $B_i'$, $B_i \subset B_i' \subset 2B_i$.
%, one for chain and one for relative cycles.
Given a collection of chains $\{ \tau_j \}_{j=1}^L$ and a coarea covering $\{U_i \}_{i=1}^N$ for $\{\tau_j \}$
we let  
\begin{equation} \label{def of d_i}
  d_i(\tau_j, \{U_i \}) = \tau_j - \tau_j \llcorner \bigcup_{j=1}^{i}U_j  
\end{equation}

%for some choice of $B'_1$ satisfying inequality (\ref{eq:coarea}) for $\tau$ and $i=1$.
%For $1<i \leq N$ let $d_i(\tau, \{B_i' \}) = d_{i-1}(\tau, \{B_i' \}) -  d_{i-1}(\tau, \{B_i' \}) \llcorner B'_i$.
%for some $B'_i$, $B_i \subset B'_i \subset 2B_i$,
We then have the following mass bound for the boundary of $\partial d_i(\tau_j)$:
\begin{align} \label{mass d_i}
    \M(\partial d_i(\tau_j)) 
    % &\leq 
%|\partial \tau| + \frac{i}{r_0} \sup_j |\tau \cap 2B_j| \leq 
%|\partial \tau| + c(M,p) \frac{|\tau|}{\delta^{n+1}}$.
%\M (\partial d_{i-1}(\tau,  \{B_i' \})) - \M( \partial d_{i-1}(\tau,  \{B_i' \}) \llcorner B_i' ) +  \frac{2}{r_0} \M(d_{i-1}(\tau,  \{B_i' \}) \llcorner 2B_i ))\\
& \leq \M(\partial \tau_j) - \sum_{l=1}^i \M(\partial \tau \llcorner U_l) + \frac{2NL}{r_0} \M(\tau_j)
\end{align}
%Here we used $ \frac{N}{r_0} \leq \frac{c(M, p)}{\delta^{n+1}}$.
For $i > N$ define $d_i(\tau)$ to be the empty chain.

\item
For each cell $C \subset X(q)$
let $x(C) \in C \cap  X(q)_0$ denote the 
vertex of $C$ that is closest to $(0,..,0)$ 
(thinking of $X(q)$ as contained in the
ambient unit cube of large dimension).

    \item $dist_{\infty}(x,y) = \min_j\{|x_j - y_j|\}$

\item
For a face $E^l \subset X(q)_l$ we define
$Center(E)$ to be the collection of all points $x$ in $E$
with $dist_{\infty}(x, \partial E) > 1/3^{q+1}$.

%    Observe that if $dist_{\infty}(x,y) \leq \frac{1}{3^{q' + 1}}$,
%    then $dist_{\infty}(\Xi(x), \Xi(y)) \leq  \frac{1}{3^{q'}}$.

% \item $n_i(x) = 3^{q_i} dist_{\infty}(x, \Xi(x))$.
 %$q_i = q + \lceil \log(N) \rceil +i$.

\item It will be useful to define maps between 0-skeleta of subdivisions of cells with nice properties. These constructions are similar to \cite{MNWil}[Appendix C]. 

Given a cell $E$ of $X(q)$ and $x \in E$ we let $[x]_{q'}$
denote the closest point of $E(q')_0$ to $x$; if there is more than one vertex of $E(q')_0$ that minimizes $d_{\infty}(x, \cdot)$,
then we set $[x]_{q'}$ to be the vertex closest to $(0,..,0)$ in the ambient cube $I^m \supset X \supset E$.

%\item Let $R_{q_1,q_2}: E(q_1) \rightarrow E(q_2)$ by defined by $R_{q_1,q_2}(x) = [x]_{q_2}$.

\item We will say that a map $G:Y(q')_0 \rightarrow Y(q'')_0$, $q'\geq q''$, is an adjacency-preserving extension if 
\begin{enumerate}
\item $G(x) = x$ for all $x \in Y(q'')_0$;
    \item for any two $x,y \in Y_1(q')_0$ that lie in the same face of
$Y_1(q')$ we have that $G(x), G(y)$ are in the same face of $Y_2(q'')$.
\end{enumerate}

\item Given a cell $E$ of $Y(q)$ and $q'>q''$
define $H_{q',q''}: E(q')_0 \rightarrow E_0(q'')_0$ by
$H_{q',q''}(x) = [x]_{q''}$. We can extend this definition to a map $H_{q',q''}: X(q+q')_0 \rightarrow X(q+q'')_0$
by inductively applying $H_{q',q''}$ on each face.
We observe that $H_{q',q''}$ is an adjacency-preserving extension. It will be convenient to write $H_{q'}$
for $H_{q',0}$.

\item Let $L_t(E) = \{ x \in E:d_\infty(x, \partial E)=t\}$ and
let $\phi_t: L_t(E) \rightarrow \partial E$ be a bijective homothety from the
center point of $E$. We define maps
$$P: E(q'+1)_0 \setminus Center(E) \rightarrow \partial E(q')_0 \times I^1(q')_0$$
$$Q: \partial E(q')_0 \times I^1(q')_0 \rightarrow E(q')_0 \setminus Center(E) $$
by setting 
\begin{align}
    P_{q'}(x) &= ([L_{d_\infty(x, \partial E)}(x)]_{q'},3^{q+1}d_\infty(x, \partial E))  \label{defP} \\
    Q_{q'}(x,t) & = [L_{t/3^{q+1}}^{-1}(x)]_{q'} \label{defQ}
\end{align}
Note that the composition map $$Q \circ P(x): E(q'+1)_0 \setminus Center(E) \rightarrow E(q')_0 \setminus Center(E)$$
is an adjacency-preserving extension. Moreover, 
we have $Q \circ P(x) = H_{q'+1,q'}(x)$ for $
x \in \partial E(q')_0$.

%\item We will call a map $G: Y(q')_0 \rightarrow I_k(M;G)$ center-constant if for every face $E$ of $Y(q')$ and
%$x,y \in Center(E) \cap Y(q')_0$ we have $G(x) = G(y)$.

\end{itemize}

\begin{lemma} \label{small filling}
    There exists $\eps_0(k,j,M, \delta)>0$ with the following property. 
    Let $F:Y^j(q)_0 \rightarrow \Zk$
    be a $\delta$-localized 
%    center-constant 
    map with $
    \F(F(x),0)< \eps$ for all $x \in Y^j(q)_0$ and some $\eps< \eps_0(k,j,M, \delta)$.
    Then there exists $\overline{q}(k,j,M, \delta)$,
    $c_1(k,j,M)$, $c_2(k,j,M, \delta)$ 
   % an adjacency-preserving extension map
    %$R: Y^j(q+\overline{q})_0 \rightarrow Y^j(q)_0$ 
    and map $\tau: Y^j(q+\overline{q})_0 \rightarrow I_{k+1}(M;G)$, such that $\tau$ is $c_1\delta$-localized, $\M(\tau(x))< c_2 \varepsilon$ 
    %and an extension map $R: Y^j(q')_0 \rightarrow Y^j(q)_0$, such that 
    and $\partial \tau(x) = F \circ H_{\overline{q}}(x)$.
\end{lemma}

\begin{proof}
    The proof is by induction on $n-k$.
    If $n=k$, then taking $\tau(x)=0$ gives the desired result.

    Assume the lemma holds for families of $(k+1)$-cycles.
    To prove the result for families of $k$-cycles we now proceed
    by induction on $j$. 
%    The adjacency-preserving extension map $$R_{k,j}:Y(q+\overline{q})_0 \rightarrow Y(q)_0 $$ that we construct inductively will have the property that for each face $E$ of $Y^j(q)$ the restriction of $R_{k,j}$ to $E$ only depends of $k$ and $j$.
    
    If $j=0$ the result is immediate.
    Assume the lemma holds for $(j-1)$-dimensional families.
    Applying it to the $(j-1)$-skeleton $Y(q)_{j-1}$ of $Y(q)$ we obtain 
    $c_1(k,j-1,M)\delta$-localized family 
    $$\tau_{j-1}: Y(q)_j \cap Y(q+q_{j-1})_0 \rightarrow I_{k+1}(M;G)$$
%and adjacency-preserving extension $R_{j-1}: Y(q+q_{j-1})_0 \cap Y(q)_j \rightarrow Y(q)_0 $
with $\partial \tau (x) = F\circ H_{q_{j-1}}(x)$ and a mass bound
    $$\M(\tau_{j-1}(x)) \leq c_2(k,j-1,M, \delta) \varepsilon$$

    Let $C^j$ be a $j$-dimensional face of $Y(q)$.
    Let $$Q: \partial C^j(q_{j-1})_0 \times I^1(q_{j-1})_0 \rightarrow C(q_{j-1})_0 \setminus Center(C)$$
    be the map defined as in (\ref{defQ}).
    We will now construct a map 
$$\tau_{\partial C \times I}:\partial C^j(q_{j-1})_0 \times I^1(q_{j-1})_0 \rightarrow I_{k+1}(M;G)$$
   We set $\tau_{\partial C \times I}(x,0) =  \tau_{j-1}\circ Q(x)$.
%$\tau_{\partial C \times I}(x,1) = \tau_{j-1}(x(C))$ for $ x \in \partial C(q_{j-1})_0$.
   
    Since $F$ is $\delta$-localized
    %for each $v \in I(q_{j-1})_0$
    there exists a $\delta$-admissible collection of sets $\{U^C_i\}$
    with $  F(x)-F(x(C) )$ supported in $\bigcup U^C_i$ for all
    $x \in C_0$. 
  %  $ \partial C(q_{j-1})_0$.
   Hence, there exists a $(k+1)$-chain $fill(x)$ supported in $\bigcup U^C_i$
    with $\M (fill(x))< 2\varepsilon$ and $\partial fill(x) = F(x)-F(x(C)) $ for all $x \in C_0$ .
  %  From the definition of $F_{j-1}(x)$ we have $\partial fill(H_{q_{j-1}}(x)) =  F_{j-1}(x) - F(x(C))$ for $x \in \partial C(q_{j-1})_0$.
    We let
    $$\tau_{\partial C \times I}(x,1) = \tau_{j-1}(x(C))+fill(H_{q_{j-1}}(x))$$
    We define $$\tilde{\tau}(x) = \tau_{\partial C \times I}(x,1)-\tau_{\partial C \times I}(x,0)$$
From the definition we have that $\{ \tilde{\tau}(x) \} _{x \in \partial C(q_{j-1})_0}$ is a family of
relative cycles.
    
    Since  $\{ \tau_{\partial C \times I}(x)\}_{x \in \partial C(q_{j-1})_0}$ is $c(k,j-1,M)\delta$-localized, applying Lemma \ref{two admissible}
    we have that $\{ \tilde{\tau}(x) \}$ is
    $c'\delta$-localized for $c'< 20c_1(j-1,k,M)$.
We also have the mass bounds
$$\M(\tilde{\tau}(x))\leq  \M(\tau_{\partial C \times I}(x,0))+\M(\tau_{\partial C \times I}(x,1) ) \leq (2 c_2(j-1,k,M,\delta) +1) \varepsilon$$
Hence, we can apply the inductive assumption for families
of $(k+1)$-cycles. We obtain that 
    there exists a $c_1(k+1,j-1,M)c'\delta$-localized family $\{\sigma(x)\}_{x \in \partial C(q_{j-1}+q' )_0}$ of $(k+2)$-chains,
    such that $\partial \sigma(x) = \tilde{\tau} \circ H_{q_{j-1}+q',q_{j-1}}(x)$.
    %for some adjacency-preserving extension map 
%    $R': \partial C(q_{j-1}+q')_0 \rightarrow \partial C(q_{j-1})_0$. 
Let $\{U_i\}$ be a coarea covering for
    the family $\{ \sigma(x)\}_{x \in \partial C(q_{j-1}+q' )_0}$.
    
 Increasing $q'$ if necessary we may assume
    that $q_{j-1} + q' \geq \log_3(N)+1$.
   Let $n(t) = 3^{q_{j-1}+q'+1}dist_\infty(t, \partial C \times \{1\})$.
    For $t \in I(q_{j-1}+q')_0 $ we have
    $$\tau'(x,t) =\tau_{\partial C \times I}(x,0) + \partial d_{n(t)}(\sigma(x), \{U_i\})$$
Observe the following properties of $\tau'$:
\begin{enumerate}
    \item $\partial \tau'(x,t) = F_{j-1}\circ H_{q_{j-1} + q'}(x)$ for $t <1$;
    \item $\tau'(x,0) = \tau_{\partial C \times I}(x)$;
    \item  $\tau'(x,1) = \tau_{\partial C \times I}(H_{q_{j-1}+q',q_{j-1}}(x),1)
    = \tau_{j-1}(x(C))+fill(H_{q_{j-1}+q'}(x))$.
\end{enumerate}

By Lemma \ref{two admissible} we also have that this family is $c_1(k,j,M)\delta$-localized for $c_1(k,j,m)$
bounded in terms of $c_1(k,j-1,M)$, $c_1(k+1,j-1,M)$
and our choice of $r_0$.
From (\ref{mass d_i}) we have that 
$$\M(\partial \tau'(x,t)) \leq c_2(k,j-1,M, \delta) \varepsilon+
c_2(k+1,j-1,M,\delta) \frac{2N3^{q'}}{r_0}\varepsilon$$
Since $r_0$ was determined by $\delta$ and $M$ and our choice of $q'$ depended only on $k$, $j$ and
$M$, we can bound this expression by $c_2(k,j,M,\delta)\varepsilon$.
    
Hence, we've obtained a $c_1\delta$-localized family $$\{\tau'(x,t)\}_{{(x,t) \in \partial C(q_{j-1}+q')_0\times I(q_{j-1}+q')_0 }}$$
We let $\overline{q}=q_{j-1}+q'+1$.
    We can then define a family $\tau$ on $C(\overline{q})_0 \setminus Center(C)$ by setting $\tau(x) = \tau' \circ P$
    for the map $P:C( \overline{q})_0 \rightarrow \partial C(q_{j-1}+q')_0\times I(q_{j-1}+q')_0$.
    For $x \in Center(C) \cap C(\overline{q})_0$ 
    we set $\tau(x) = \tau_{j-1}(x(C)) + fill(H_{\overline{q}}(x))$.
    It follows from the above estimates that $\{ \tau(x) \}$ is a $c_1(k,j,M) \delta$-localized family
    with $\M(\tau(x)) \leq c_2(k,j,M,\delta) \varepsilon$.
    Defining $\tau$ in this way on each $j$-face we obtain the desired family.
\end{proof}

We can now prove Proposition \ref{extension}. In calculations below let $c_1$ 
%or $c_1(M,k,p)$ 
denote a constant that only depends
on $M$, $p$ and $k$. The value of $c_1$ may change from line to line.

We will define a sequence of  $c_1\delta$-localized  maps $F_j: X(q)_j \cap X(q_j)_0 \rightarrow \Zk$
with $q_0 = q$ and satisfying
$F_{j}(x)=F_{j-1} \circ H_{q_j, q_{j-1}}(x)$
for $x \in X(q)_j \cap X(q+q_{j-1})_0$ and such that for every cell $E^l$ of $X(q)$, $l \leq j-1$, we have
\begin{enumerate}[i]
  \item 
%$\mathcal{F}(F(a),F(b))< c (1 + \max_{c \in C_0} |F(c)|) \delta$
$\mathcal{F}(f(x),F(y))<
c(M,j-1, \delta) \eps$ for $x \in E \cap X(q)_0$ and $y \in E \cap X(q+q_{j-1})_0$; \label{flat distance bound}
\item
$\M(F(y)) \leq 
\sup_{w\in E \cap X(q)_0} \M(f(w)) + c(M,p,\delta)\eps$
for $y \in  E \cap X(q+q_{j-1})_0$; \label{global mass bound}
\item if $D \subset E$ is a cell of $X(q+q_{j-1})$ and $\{U^D_i\}$ is the corresponding $c_1\delta$-admissible collection, then $\M(F(y) \llcorner \bigcup U^D_i) \leq \sup_{w \in E \cap X(q)_0} \M(f(w)\llcorner \bigcup U^D_i)$ for $y \in  E \cap X(q+q_{j-1})_0$. \label{local mass bound}
\end{enumerate}

First we set $F_0(x) = F(x)$ for $x \in X(q)_0$.
Assume we have defined $F_{j-1}$ on $X(q)_{j-1} \cap X(q_{j-1})_0$.
For each $j$-dimensional face $C$ of $X$, by 
Lemma \ref{small filling}, there exists a $c_1\delta$-localized family $\{ \tau(x) \}_{x \in \partial C(q_{j-1}+\overline{q})_0}$,
such that $\partial \tau(x) = F(x(C))-F_{j-1}\circ H_{q_{j-1}+\overline{q}, q_{j-1}} (x)$.
We let $q_j = q_{j-1}+\overline{q}+1$ and define 
\begin{equation} \label{def F_j}
    F_j(x) = F_{j-1} \circ H_{q_j, q_{j-1}}(x) = F \circ H_{q_j}(x)
\end{equation}on 
$X(q)_{j-1} \cap X(q_{j})_0$. We now fix $j$-dimensional cell $C$ and describe the extension
of $F_j$ from $\partial C \cap X(q_j)_0$ to $C \cap X(q_j)_0$.

Let $\{U_i\}$ be a coarea covering for the family $\{ \tau(x)\}_{x \in \partial C(q_{j-1}+\overline{q})_0}$.
To obtain the desired mass bound for $F_j$ we will need the following construction. We defined $F_j$ by composing $F_{j-1}$ with an adjacency preserving extension on $\partial C$ and now we'd like to extend to the interior of $C$
by ``chopping away" $\tau(x)$ using (\ref{def of d_i}), taking the boundary
of the resulting $(k+1)$-chain and adding it to the value of $F_j$ on the boundary, until we deform the
family of $\{F_j(x)\}_{x \in \partial C(q_j)_0}$
to the constant family equal to $F(x(C))$ on $Center(C)$. The problem with this
approach is that the restriction $F_j(x)\llcorner U_i$ 
 for $x \in \partial C(q_j)_0$ can be large for some values of $i$, while $F(x(C))\llcorner U_i$ can be large for other values of $i$, so the interpolation may increase the total mass
to more than $\M(F(x(C))+\M(F(y))$ (for some $y \in C_0$), violating mass bounds (\ref{global mass bound}) and (\ref{local mass bound}) above. Instead, we will make a mass-minimizing choice in each $U_i$ that will guarantee that the mass bounds are
satisfied.

For each $i=1,...,N$ we let $m_i = \tau(x(i)) \llcorner U_i$,
where $x(i) \in \partial C(q_{j-1}+\overline{q})_0$ is such that $$\M( (\partial \tau(x(i)) \llcorner U_i -F(x(C)) \llcorner U_i) \leq \M((\partial \tau(x) \llcorner U_i -F(x(C)) \llcorner U_i)$$ for all $x \in \partial C(q_{j-1}+\overline{q})_0$.
%\item Define $z_1+\partial \sum_{i=1}^N m_1 (\{Q(z_1,z_l)\}, z_1, B_i' \setminus \bigcup_{j=1}^{i-1}B_j')$
Define
 $$b(\{ \tau(x)\}, F(x(C)),\{ U_i \}) = \sum_{i=1}^N m_i$$
 From estimate (\ref{mass d_i}) we have the following mass bounds for $b(\{ \tau(x)\}, F(x(C)),\{ U_i \})$ and its boundary:

\begin{align}
    \M(b(\{ \tau(x)\}, F(x(C)),\{ U_i \})) & \leq \sum_{x \in C(q_{j-1}+\overline{q})_0} \M(\tau(x)) \label{eq: mass of b 1} \\  
\begin{split} \label{eq: mass of b 2}
        \M(\partial b(\{ \tau_j \},F(x(C)),\{ U_i \}) - F(x(C)) & \leq \min_{x \in C(q_{j-1}+\overline{q})_0} \M(\partial \tau(x) - F(x(C))\\
    &+ \frac{2^{j}N3^{q_{j-1}+\overline{q}}}{r_0} \max_{x \in C(q_{j-1}+\overline{q})_0} \M(\tau(x)) 
\end{split}
\end{align}

%Define $$\tilde{\sigma}(C) = b(\{ \tau_C(x) \}_{x \in \partial  C(q_{l-1}+1)_0}, F(x(C)) , \{U_i\} )$$
We define a map $$\tau': \partial C( q_{j-1}+ \overline{q})_0 \times I(q_{j-1}+ \overline{q})_0 \rightarrow I_{k+1}(M;G)$$
as follows.
Let $n(t) = 3^{q_{j-1}+ \overline{q}}(1-t)$ (without any loss of generality we may assume
$N< 3^{q_{j-1}+ \overline{q}}$). We set
\begin{equation} \label{eq:sigma}
    \tau'(x,t) = \tau \circ H_{q_j, q_j-1}(x) - d_{n(t)}(\tau \circ H_{q_j, q_j-1}(x)- b(\{ \tau(x)\}, F(x(C)),\{ U_i \})) 
\end{equation}
%for $(x,t) \in $
Finally, we let 
$$F_j(x) = F(x(C))- \partial \tau' \circ P_{q_j-1}(x)$$
for $x \in C(q_j)_0 \setminus Center(C)$.
Observe that this definition coincides with (\ref{def F_j}) on $\partial C(q_j)_0$.
For $x \in C(q_j)_0 \cap Center(C)$ we define
$$F_j(x) = F(x(C))- \partial  b(\{ \tau(x)\}, F(x(C)),\{ U_i \})) $$
The flat distance bound (\ref{flat distance bound}) follows from
(\ref{eq: mass of b 1}); the mass bounds (\ref{global mass bound}) and (\ref{local mass bound}) follow from
(\ref{eq: mass of b 2}) and our construction of $b$. 
\end{proof}

\begin{remark}
Notice that if $M$ is a PL manifold,
then we can perform the above construction starting with a polyhedral approximation
$F(x)$ for each cycle $f(x)$, $x \in X(q)_0$. The construction in the proof
of Proposition \ref{extension} then gives a discrete family $F:X(q')_0 \rightarrow \ZkPL $ 
of polyhedral cycles and Proposition \ref{discrete to continuous} a continuous
family $F:X \rightarrow \ZkPL $ of polyhedral cycles. Hence, whenever it is convenient, 
we can replace a Riemannian metric with a PL approximation and a family of flat cycles with
an approximating family of polyhedral cycles. All relative $k$-dimensional cycles in our 
construction may be chosen so that $F(x) \cap \partial M = \partial F(x)$ 
and is a $(k-1)$-dimensional polyhedral cycle in $\partial M$.
\end{remark}

Combining Proposition \ref{discrete to continuous}
and \ref{extension} we obtain the following Proposition.

\begin{proposition} \label{dc2}
For all $\delta>0$ sufficiently small there exists $\eps_1>0$,
such that the following holds.
Given a map $F: X(q)_0 \rightarrow \Zk$,
which is $\eps_1$-fine, there exists $q'$
and a continuous extension $F: X(q+q') \rightarrow \Zk$, such that:

1. $F: X(q+q') \rightarrow \Zk$ is $\delta$-localized;

2. for every cell $C$ of $X(q)$, $x \in C_0$ and $y \in C \cap X(q+q')_0$, we have  $\mathcal{F}(F(x),F(y))< c(M,p)   \delta \max_{w \in X(q)_0} \M(F(w) )$;

3. $\M(F(x)) \leq c(M,p)(1+\delta)\max_{w \in C_0} \M(F(w))$.
\end{proposition}

\begin{proof}
Follows by applying Propositions \ref{extension} and \ref{discrete to continuous}.
\end{proof}

Theorem \ref{approximation} (1) directly follows from Proposition \ref{dc2}.
To prove Theorem \ref{approximation} (2) we need the following lemma.

\begin{lemma} \label{discrete homotopy}
For every sufficiently small $\delta>0$ there exists $\varepsilon_2(M,p,\delta)>0$
such that the following holds.
Let $F_0: X(q) \rightarrow \Zk$ and $F_1: X(q) \rightarrow \Zk$ be two 
$\delta$-localized maps.
Suppose $\F(F_0(x), F_1(x)) \leq \eps \leq \eps_2$.
Suppose $$m = \max \{\sup_{w \in X} \M(F_0(w)), \sup_{w \in X} \M(F_1(w)) \}< \infty$$
Then there exists a homotopy $F: X(q) \times I(1, q) \rightarrow \Zk$
between $F_0$ and $F_1$, such that

1. $\F(F(x,t),F_0(x)) < c(p,M) m\delta+ \eps_2$ for all $x \in X$;

2. $\M(F(x,t)) \leq c(p,M) m $ for all $(x,t) \in X \times I$.
\end{lemma} 

\begin{proof}
First we will define $F$
restricted to $X(\tilde{q}) \times \{0\} \cup  X(\tilde{q})_0 \times I(1,\tilde{q})_0 \cup X(\tilde{q}) \times \{1\}$
for some $\tilde{q} \geq q$ using the construction from Proposition
\ref{extension}.
Then we will apply Proposition \ref{discrete to continuous} II
to extend $F$ to a continuous map defined everywhere.

Let $F|_{X(q) \times {0}} = F_0$
and $F|_{X(q) \times {1}} = F_1$.
Applying the inductive step extension argument from the proof of Proposition 
\ref{extension} we define $c\delta$-localized map $F^1$ on
$v \times I(q_1)_0$ for each $v \in X(q)_0$,
such that $F^1(v , t)=F(v,t)=F_t(v)$ for $t =0,1$.
Note that $\F(F(v,t), F_0(v))\leq \eps_2$.
Using the same extension argument we can then extend $F^1$ to a $\delta$-localized map
on $(X(q+q_1)_0 \times I(q_1)_0) \cap (X(q)_1 \times I(q_1)_0 )\setminus X(q) \times \{0,1\} $. If we define $F^1(x) = F(x)$ on $X(q) \times \{0,1\}$,
then by Lemma \ref{two admissible} $F^1$ will still be $c\delta$-localized.
This defines $F^1$ on $(X(q)\times I(q))_1 \cap 
(X(q_1)\times I(q_1))_0$. We apply the inductive step 
from the proof of Proposition \ref{extension} to define $F^i$, $i=2,...,p$, until we obtain a $c\delta$-localized
map on $X(q_p)_0 \times I(q_p)$. 
We can then apply Proposition \ref{discrete to continuous} II
to extend $F$ to a continuous map defined everywhere.
The mass and flat norm estimates follow from the corresponding estimates in Proposition \ref{discrete to continuous} and the inductive step in the proof
of Proposition \ref{extension}. 
\end{proof}

Let $F_0$ and $F_1$ be two maps as in Theorem \ref{approximation} (2). 
Let $m = \sup_{w \in X} \M(F_0(w)) + \sup_{w \in X} \M(F_1(w)) $.

By Proposition \ref{dc2} for every sufficiently small $\delta>0$ 
we can define $\delta$-localized maps $\overline{F}_0$ and $\overline{F}_1$,
which agree with $F_0$ and $F_1$ on the $0$-skeleton
of some subdivision $X(q')_0$.
By Proposition \ref{dc2} we then have
$$\mathcal{F}(\overline{F}_0(x), \overline{F}_1(x)) \leq c(M,p)m\delta + \varepsilon$$
For $\delta>0$ and $\varepsilon>0$ sufficiently
small we have that $\overline{F}_0$ and $\overline{F}_1$ are
homotopic by Lemma \ref{discrete homotopy}.

Now we would like to construct a homotopy between $\overline{F}_i$
and $F_i$ for $i=0,1$.

Let $\delta_i  $ be a decreasing sequence of positive numbers converging to $0$ and $\eta_i= \min\{ \delta_i, \eps_2(\delta_i)\}$, where $\eps_2(\delta_i)>0$ is from Proposition \ref{dc2}.
%satisfy $\eps_i < \min\{ \frac{1}%{i},\varepsilon_0(M,p,\delta_i),\frac{1}{i c(M,p,\delta_i) %} \}$,
%where $ \varepsilon_0(M,p,\delta_i)$ and 
%$c(M,p,\delta_i) $ are the constants from the statement of 
%Proposition \ref{extension}.
Let $q_i \geq q$ be an increasing sequence of integers,
so that $F_0: X(q_i) \rightarrow \Zk$ is $\eps_i$-fine.
Applying Proposition \ref{dc2} we obtain a sequence
of $\delta_i$-localized maps $G_i: X(q_i ') \rightarrow \Zk$.
We claim that $G_i \rightarrow F_0$. 
Given $x \in X$ let $x_i$ denote the closest point of
$X(q_i)_0$ to $x$.

\begin{align} \label{cauchy}
\begin{split}
    \F(G_i(x), F_0(x)) & \leq  \F(G_i(x), G_i(x_i)) + \F(G_i(x_i), F_0(x_i)) + \F(F_0(x_i), F_0(x)) \\
    & \leq c(M,p) m \delta_i+ \eps_i
%c (\frac{1}{i})^\frac{1}{n+1} + 
\end{split}
\end{align}

The convergence follows by standard compactness results for flat cycles \cite[4.2.17]{federer}.

To finish the construction we need to define a homotopy
between $G_i$ and $G_{i+1}$.
Observe that if $G_i$ is $\delta_i$-localized with  
domain $X(q_i')$, then it is also $\delta_i$-localized
with domain $X(q_{i+1}')$ for $q_{i+1}' > q_i'$.
It follows from inequality (\ref{cauchy}) that 
$\F(G_{i+1}(x), G_i(x)) \rightarrow 0$. Assuming that $\delta_i$
is small enough we can apply 
Lemma \ref{discrete homotopy} to define homotopy $H_i(x,t)$
between maps $G_i:X(q_{i+1}') \rightarrow \Zk$ and 
$G_{i+1}: X(q_{i+1}') \rightarrow \Zk$ for all $i>0$. 
Moreover, the flat norm estimates from Proposition \ref{dc2} and Lemma \ref{discrete homotopy}
imply that for each $x$ and $t$ the sequence $H_i(x,t)$ is Cauchy in $\F$.
This implies continuity of homotopy from $\overline{F}_0$ to $F_0$.
The same construction works for the homotopy from $\overline{F}_1$ to $F_1$.

\subsection{Homotopies with better estimates for the mass.}

We need to prove an analogue of Theorem \ref{approximation}
with better estimate for the mass of cycles when the family satisfies some additional assumptions.

In case of $0$-cycles we prove approximation theorem with an optimal bound.

\begin{proposition} \label{approximation_0}
There exists $\eps_0(M,p)>0$ and $c(M,p)>0$, such that
the following holds.

Let $F: X(q)_0 \rightarrow \mathcal{Z}_0(M, \partial M; \Z_2)$ be a map
with $\mathcal{F}(F(a),F(b)) < \eps \leq \eps_0$ for every pair of adjacent vertices 
$a$ and $b$, then there exists an extension of $F$ to a continuous map 
$F: X \rightarrow \mathcal{Z}_0(M, \partial M; \Z_2)$ with the following properties.
For each cell $C$ in $X(q)$
and $x,y \in C$ we have

(a) $\mathcal{F}(F(x),F(y))< c \sup_{w \in X_0} \M(F(w)) \eps $ 

(b) $\M(F(x)) \leq \sup_{w \in X_0} \M(F(w))$.
\end{proposition}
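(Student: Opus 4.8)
The plan is to follow the same skeleton-by-skeleton interpolation strategy used in the proof of Proposition \ref{extension} and Proposition \ref{discrete to continuous}, but to exploit the special structure of $0$-cycles with $\Z_2$ coefficients to avoid any loss of mass. The key observation is that a relative $0$-cycle in $(M,\partial M;\Z_2)$ is just a finite set of points with $\Z_2$-multiplicities (equivalently, a finite subset of $M$, since $2=0$), and its mass is the number of interior points. If $z_1,z_2$ are two such $0$-cycles with $\mathcal{F}(z_1,z_2)<\eps$ small, then the optimal filling $Q(z_1,z_2)$ is a $1$-chain of mass $<\eps$, which (after a small perturbation made within an arbitrarily small neighborhood) we may take to be a disjoint union of very short arcs, each joining a point of $z_1$ to a nearby point of $z_2$ or to a nearby boundary point. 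The radial-contraction homotopy along such an arc slides one point of the $0$-cycle continuously to another, and at every intermediate time the cycle still has exactly one point in that arc, i.e.\ the mass never increases above $\max\{\M(z_1),\M(z_2)\}$. This is the mechanism that replaces the cruder cone estimate of Proposition \ref{discrete to continuous}(3).

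First I would run the construction of Proposition \ref{extension} to replace $F$ on $X(q)_0$ with a $\delta$-localized discrete family $\bar F$ on a finer subdivision $X(\tilde q)_0$, but keeping track of the fact that in the $0$-dimensional case the cutting operators $d_i$ and $b_i$ act on $1$- and $2$-chains that can be taken to be unions of arcs and that the mass bookkeeping gives $\M(\bar F(x))\le \sup_{w\in X_0}\M(F(w))$ exactly: chopping a filling arc by a small ball either leaves an endpoint in the interior or pushes it to the boundary, in both cases not increasing the count of interior points. Here the choice of the auxiliary minimizing cycle $z$ near $F(x(E))$ (as in the $\sigma_E$ construction) is what prevents the count from transiently exceeding the maximum of the two endpoint masses. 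Then I would run the interpolation of Proposition \ref{discrete to continuous}, again using that each convex set $U^C_l$ in the admissible collection can be taken small enough that $F(x)-F(y)\subset \bigcup U^C_l$ consists of at most one point of each $U^C_l$, so that radial contraction inside each $U^C_l$ moves exactly one point and the continuous extension satisfies $\M(F(x))\le \max_{w\in C_0}\M(F(w))$. The flat-distance bound (a) then comes directly from coning: $\mathcal{F}(F(x),F(y))\le \delta\,(\M(F(x)\llcorner\cup U^C_l)+\M(F(y)\llcorner\cup U^C_l))\le c\sup_{w\in X_0}\M(F(w))\,\eps$, where $\delta$ is a fixed small constant (no $\eps^{1/(n+1)}$ loss is needed because the $0$-dimensional Federer--Fleming deformation is essentially lossless).

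The main obstacle is the mass bookkeeping: one has to verify that throughout the multi-step construction (extension to higher skeleta of $X(q)$ via the maps $\Xi,\Theta$, and the nested cutting operators) the running count of interior points is never pushed above the supremum over the $0$-skeleton, rather than above that supremum plus an error term as in Proposition \ref{discrete to continuous}(3). The point is that for $0$-cycles the only way mass is created in the general construction is when a filling chain $Q$ or $\sigma_C$ has interior mass that gets ``released'' by taking a boundary; but a $1$-chain that is a union of arcs has boundary consisting of its endpoints, and by arranging (using the freedom in choosing $Q$, $z$, and the perturbed arcs) that each arc has at least one endpoint on the part of the cycle we are deforming toward, no new interior points appear. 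I expect this verification to be the bulk of the argument; once it is in place, properties (a) and (b) follow as above and the continuity of the extension is exactly as in Proposition \ref{discrete to continuous}.
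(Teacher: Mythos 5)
Your core insight is right: a relative $0$-cycle with $\Z_2$ coefficients is a finite set of interior points, and one can interpolate between two nearby such cycles by continuous motions (radial contractions) that never increase the point count. But the proposed route through Proposition \ref{extension} has a genuine gap. You claim that for $0$-cycles the cutting operators $d_i$ do not increase mass because ``chopping a filling arc by a small ball either leaves an endpoint in the interior or pushes it to the boundary.'' This is false: if an arc of the filling passes through the ball $B'_i$ with both endpoints outside it, the operation $d_i(\tau)=\tau-\tau\llcorner B'_i$ splits it into two arcs and creates two \emph{new} interior endpoints on $\partial B'_i$; the coarea estimate in that construction only bounds the new boundary mass by $\sim \M(\tau)/r_0$, i.e.\ by an $\eps/\delta$-type quantity, so the resulting family satisfies only the weaker bound $\M(F(y))\le\sup_w\M(F(w))+c\,\eps/\delta^{6np}$ and not the exact inequality (b). The statement of Proposition \ref{approximation_0} requires the bound with no error term whatsoever, so any route that passes through Proposition \ref{extension} is doomed. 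Your auxiliary claim that the admissible sets $U^C_l$ can be taken so small that $F(x)-F(y)$ meets each $U^C_l$ in at most one point is also not justified and not needed.

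The paper's proof sidesteps Proposition \ref{extension} entirely: for $0$-cycles, $\eps$-fine \emph{already implies} $c(p)\eps$-localized, because the filling $Q(F(a),F(b))$ is a $1$-chain of total length $<\eps$, each of whose components has diameter $<\eps$ and covers the two endpoints it contributes to $\mathrm{supp}(F(a)-F(b))$. One then only needs to redo the interpolation step of Proposition \ref{discrete to continuous} with a parity-aware centering: inside each admissible set $U^E_j$ one notes $\M(F(x)\llcorner U^E_j)\equiv\M(F(y)\llcorner U^E_j)\pmod 2$, defines $z_j$ to be either empty or the single center point $c_j$ according to this parity, and interpolates by radially contracting $F(x)\llcorner U^E_j$ toward $c_j$; at every intermediate time the number of interior points is no greater than that of the endpoint values, and the midpoint value $F(x)-\sum_j F(x)\llcorner U^E_j+\sum_j z_j$ has mass $\le\M(F(x))$. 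That observation---that the interpolation formula itself must be modified for $k=0$, rather than re-running the localization machinery---is the piece missing from your proposal, and it is exactly what yields the lossless mass bound.
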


\begin{proof}
We observe that every $\eps$-fine family of $0$-cycle 
$F: X(q)_0 \rightarrow \mathcal{Z}_0(M, \partial M; \Z_2) $ is
$\delta$-localized for some $\delta \leq c(p) \eps$.
Hence, there is no need for Proposition \ref{extension}.

We can prove an analogue of Proposition \ref{discrete to continuous}
with a better bound for the mass.
We do it as follows. 

First we extend to 1-skeleton as follows.
Let $E \subset X(q)_1$ and let $\{U^E_j \}$ be a
$\delta$-admissible collection of open sets
containing $F(x) - F(y)$, $\partial E = x-y$.
Therefore, we must have $\M(F(x) \llcorner U_j^E) = \M(F(y) \llcorner U_j^E) mod 2 $.
Let $z_j = 0$ if $\M(F(x) \llcorner U_j^E) = 0 mod 2 $
and  $z_j  = \{ c_l \} $  if $\M(F(x) \llcorner U_j^E) = 1 mod 2$,
where $c_j$ is a center point of $U_j^E$.

Let $e$ denote the midpoint of $E$. We define 
$$F(e) = F(x) - \sum F(x) \llcorner U_j^E + \sum z_j = F(y) - \sum F(y) \llcorner U_j^E + \sum z_j$$
We homotop $F(x)$ to the midpoint of $E$ by radially
 contracting $F(x) \llcorner U_j^E$ in each $U_j^E$. We do the same for $F(y)$.

Similarly, given a cell $C$ and a $\delta$-admissible collection
$\{U^C_j \}$, so that $F(x) - F(y) \subset \cup U^C_j$ for any $x,y \in \partial C$,
we define cycles $z_j$ and radially contract family $\{F(x) \}_{x \in \partial C}$
to $F(e) = F(x) -  \sum F(x) \llcorner U_j^C + \sum z_j$, where $e$ is the center point of $C$.
\end{proof}

For higher dimensional cycles we obtain a better bound
for families that have no concentration of mass.

Map $F: X \rightarrow \Zk$ is said to have no concentration of mass
if $$\limsup_{r \rightarrow 0} \{ \M(F(x) \llcorner B_r(p)): p \in M, x \in X \} = 0$$
for every $x \in X$ (see \cite[3.7]{MNRicci}).

%For families with no concentration of mass we have the following:

\begin{proposition} \label{approximation no concentration}
%There exists $\eps_0(M,p)>0$ and $c(M,p)>0$, such that
%the following holds. Suppose $k>0$.

Let $F: X(q) \rightarrow \Zk$ be a map with no concentration of mass and $\sup_{x \in X} \M(F(x))< \infty$.
For every $\eps>0$ and $\delta>0$ there exists a
$\delta$-localized map $F': X^p(q+\overline{q}) \rightarrow \Zk$, such that

%with $\mathcal{F}(F(a),F(b)) < \eps \leq \eps_0$ for every pair of adjacent %vertices 
%$a$ and $b$, then there exists an extension of $F$ to a continuous map 
%$F: X \rightarrow \Zk$ with the following properties.
%For each cell $C$ in $X(q)$
%and $x,y \in C$ we have

(a)% $\mathcal{F}(F(x),F'(x))< c \sup_{w \in X_0} |F(w)| \eps^{\frac{1}{5np}} $ 
$\mathcal{F}(F(x),F'(x))< \eps $;

(b)% $|F'(x)| \leq \sup_{w \in X_0} |F(x)|  + c \eps^{\frac{1}{10np}} $.
 $\M(F'(x)) \leq \sup_{w \in X_0} \M(F(x))  + \eps $.

\end{proposition}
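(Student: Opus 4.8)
The plan is to combine the discrete approximation machinery of Propositions \ref{extension} and \ref{discrete to continuous} with the extra flexibility furnished by the no-concentration hypothesis, which is precisely what kills the unwanted additive mass error $c\eps/\delta^{6np}$ appearing in Proposition \ref{extension}. The key observation is that in the construction of $F_1$ (and then of each $F_j$) in the proof of Proposition \ref{extension}, the harmful term comes from the cutting functions $d_i$, $b_i$: each time we chop a chain against a ball $B_i'$ of radius $\sim\delta$ we pay $\M(\tau\llcorner 2B_i)/r_0$ in the boundary mass, and summing over the $N\sim\delta^{-n}$ balls produces the $\delta^{-(n+1)}$-type loss. Under no concentration of mass we are free to first \emph{refine the cube complex} $X(q)$ to a much finer $X(q'')$ so that $F$ is $\eps'$-fine for an $\eps'$ as small as we like relative to $\delta$; moreover, by the definition of no concentration, after possibly shrinking the balls $B_i$ (equivalently, choosing the covering radius $r_0$ small depending on $F$ and on the desired error, not just on $\delta$), every quantity $\M(F(w)\llcorner 2B_i)$ is uniformly small, so the deformation chains $\sigma_E$, $\sigma_C$ one builds have mass controlled by that smallness rather than by $1/\delta$.

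Concretely, first I would fix $\eps,\delta>0$ and invoke the no-concentration hypothesis together with a compactness/uniformity argument on $X$ (the family is continuous and $X$ compact) to produce a single radius $\rho>0$ such that $\M(F(x)\llcorner B_\rho(p))<\eta$ for all $x\in X$, $p\in M$, where $\eta$ is a tiny auxiliary parameter to be chosen at the end. Then I would choose a subdivision $X(q)$ fine enough that $F$ is $\eps'$-fine with $\eps'\ll\eta\,\delta^{10np}$, say. Running the construction of Proposition \ref{extension} \emph{with covering balls of radius $r_0=\min\{\delta/8^p,\rho\}$}, the chains $Q(F(x),F(y))$ entering $\sigma_E$ now have mass $<\eps'$, and each application of $d_i$ or $b_i$ only chops against balls $2B_i$ of radius $\le 2\rho$, on which the relevant chains (which are the $F(w)$'s themselves, up to the tiny $Q$-corrections) have mass $<\eta$. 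Tracking the estimates exactly as in that proof, but with $\M(\tau\llcorner 2B_i)\le\eta$ in place of crude bounds, gives $\M(\sigma_C(x))\le c(p)(\eta+\eps')$ and, crucially, $\M(F_l(x))\le\sup_{w\in C\cap X(q)_0}\M(F(w))+c(p)(\eta+\eps'/\delta^{6np})$. Feeding the resulting $\delta$-localized discrete family $F'':X(q'')_0\to\Zk$ into Proposition \ref{discrete to continuous} I, whose part (3) bound is $\M(F(x))\le\max_{w\in C_0}\M(F(w))+c\max_{w\in C_0}\M(F(w)\llcorner\cup U^C_l)$, the second term is again bounded by $c(p)\eta$ since the admissible sets $U^C_l$ have total radius $<c\delta\le c\rho$. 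Choosing $\eta$ and then $\eps'$ small enough relative to the prescribed $\eps$ yields both (a) $\F(F(x),F'(x))<\eps$ (the flat distance from $F$ to $F''$ on the $0$-skeleton is $<\eps'$ by $\eps'$-fineness, and the interpolation adds $<c(p)\eta$) and (b) $\M(F'(x))\le\sup_{w}\M(F(w))+\eps$.

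The main obstacle I expect is the \emph{uniformity} in the no-concentration hypothesis: as literally stated in the excerpt, no concentration of mass is a pointwise-in-$x$ condition ($\limsup_{r\to0}\{\M(F(x)\llcorner B_r(p))\}=0$ for every $x$), so one must upgrade it to a statement uniform over $x\in X$ and $p\in M$ before a single radius $\rho$ can be chosen. This is where the argument really has content: one uses continuity of $F:X\to(\Zk,\F)$ together with lower semicontinuity of mass under flat convergence, and the compactness of $X$, to run a covering/contradiction argument — if no uniform $\rho$ worked there would be sequences $x_i\to x_\infty$, $p_i\to p_\infty$, $r_i\to 0$ with $\M(F(x_i)\llcorner B_{r_i}(p_i))\ge\eta_0$, and after extracting a flat limit one contradicts no concentration at $x_\infty$. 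The remaining steps — rerunning the estimates of Proposition \ref{extension} with $\eta$-small local masses, and checking Proposition \ref{discrete to continuous}'s bounds — are bookkeeping with the inequalities already assembled in those proofs, and introduce no new difficulty beyond choosing the chain of small parameters $\rho\gg r_0$-dependence, $\eta$, $\eps'$ in the right order.
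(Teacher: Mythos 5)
Your proposal is essentially correct and follows the same route as the paper: subdivide so that $F$ is $\eps_0$-fine with $\eps_0$ a suitable power of the desired localization scale, apply Proposition~\ref{extension} and then Proposition~\ref{discrete to continuous}, and invoke no-concentration-of-mass to control the interpolation error.

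One small misattribution worth flagging: you frame no concentration as what ``kills the unwanted additive mass error $c\eps/\delta^{6np}$'' coming from the cutting functions in Proposition~\ref{extension}. In fact that term is harmless without no concentration at all — the paper simply picks $\delta_0=\eps_0^{1/10np}$, so that $\eps_0/\delta_0^{6np}=\eps_0^{2/5}\to 0$, and then requires $\eps_0$ small enough that both $\delta_0<\delta/c(p)$ and $\eps_0^{2/5}<\eps/2c(p)$. The cutting-function chains in Proposition~\ref{extension} are the $(k+1)$-chains $Q(\cdot,\cdot)$ of flat mass $\lesssim\eps_0$, not the cycles $F(w)$ themselves, so your proposed bound $\M(\tau\llcorner 2B_i)\le\eta$ via no concentration isn't the relevant estimate there, and rerunning the construction with $r_0=\min\{\delta/8^p,\rho\}$ is unnecessary (and would slightly muddy the bookkeeping, since the resulting exponent should then be in $\rho$ rather than $\delta$). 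Where no concentration genuinely enters, and where your proposal does use it correctly, is in Proposition~\ref{discrete to continuous}~I(3): the interpolation adds $c\max_{w\in C_0}\M(F(w)\llcorner\cup U_l^C)$, which is small only because the $c(p)\delta_0$-admissible sets have small total radius and mass does not concentrate. Finally, the paper reads the definition (per~\cite[3.7]{MNRicci}) as uniform in $x$ and $p$, so the compactness/LSC upgrade you sketch — which, as stated, does not quite go through since restricted mass is only lower semicontinuous under flat convergence — is not actually needed.
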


\begin{proof}
Let $m = \sup_{x \in X} \M(F(x))$.
Let $c_0>0$ be larger than the constant $c(n,p)$ appearing
in Proposition \ref{discrete to continuous}.
We will choose $\delta'>0$ and $\eps'>0$
satisfying the following conditions.

Conditions for $\delta'$:
\begin{enumerate}
\item $\delta'<\frac{\delta}{c_0}$;
    \item If $\{ U_l \}_{l=1}^{n(p)}$ is a $ \delta'$-admissible
collection of open sets constructed in Proposition \ref{extension}, then (using no concentration of mass condition)
we have $$\sup_{x \in X} \sum_{l=1}^{n(p)}\M(F(x) \llcorner \cup U_l)< \frac{\eps}{10 c_0}$$
\item $\delta'< \frac{\eps}{c_0m}$;
\item $\delta'< \frac{\delta}{c_0}$.
\end{enumerate}

Conditions for $\eps'$:
\begin{enumerate}
% \item $\eps'< \frac{\eps}{10}$;
    \item  $\eps' < \eps_0(M,p,\delta')$,
    where $\eps_0$ is from the statement of
    Proposition \ref{extension};
    \item $\eps'< \frac{\eps}{10 c(M,p, \delta')}$,
     where $c(M,p, \delta')$ is the constant from
    Proposition \ref{extension}.
\end{enumerate}

Pick $q'>0$ large enough
so that $F:X(q+ q') \rightarrow \Zk$ 
is $\eps'$-fine. Apply Proposition \ref{extension} to $F$ restricted to 
$X(q+ q')_0$ to obtain (by our choice of $\eps'< \eps_0(M,p,\delta')$) a $\delta'$-localized
map $F': X(q+ q'+ \tilde{q})_0 \rightarrow \Zk$. 
%Observe that our choice of $\eps'$
%insures that
%$$\F(F(x),F'(x)) \leq \eps' + c(M,p,\delta') \eps'< \frac{\eps}{5}$$
%for all $x \in X(q+ q'+ \tilde{q})_0$.

Then we apply Proposition \ref{discrete to continuous}
to obtain a $c_0\delta'$-localized family.
By our choice of $\delta'$ we have that the family is
$\delta$-localized. It follows from Propositions \ref{extension} and
\ref{discrete to continuous} and our choices of $\eps'$, $\delta'$ that
for every $x \in C \subset X(q+ q'+\tilde{q})$, $w \in C \cap X(q+q')_0$,
$y \in C \cap X(q+q'+\tilde{q})_0$ we have

\begin{align*}
\mathcal{F}(F'(x),F(x))  & \leq \mathcal{F}(F'(x),F'(y)) +\F(F'(y),F'(w)) +
\mathcal{F}(F'(w),F(w)) +
\mathcal{F}(F(x),F(w))\\
& < c_0m\delta'+ c(M,p,\delta') \eps'+ \eps'< \eps \\
    \M(F'(x))&  \leq  \sup_x \M(F(x)) + c(M,p,\delta')\eps'+c_0 \max_{w \in C_0} \M(F(w) \llcorner \bigcup U_l^C)\\
& \leq \sup_x \M(F(x)) + \eps 
\end{align*}

\end{proof}

\section{Homotopy classes of the space of cycles} \label{sec:Almgren}
Here we use results from the previous section to give
a new proof of the Almgren isomorphism theorem \cite{A1}, \cite{A2}.

Let $X^p$ be a cubical subcomplex that is also 
a $p$-dimensional manifold and $G = \Z_2$ or $\Z$.
If $G= \Z$ we demand that $X$ is oriented.
Let $M$ be a Riemannian manifold and $\partial M$ be piecewise smooth
with $\theta$-corners. Fix a PL structure on $M$ that is a bilipschitz approximation of $M$.
To a $\delta$-localized map $F: X(q)_0 \rightarrow \ZkPL$
we associate 
a polyhedral $(p+k)$-cycle $A_F \in \mathcal{Z}_{p+k}^{PL}(X \times M; G)$ constructed as follows.
%We essentially repeat the construction in the proof of 
%Proposition \ref{discrete to continuous}.

For each $l$-cell
$C^l \subset X(q)_l$ we will define a polyhedral $(l+k)$-chain 
$A(C) \in I^{PL}_{l+k}(X \times M; G)$. We will then define $A_F = \sum_{C } A(C)$,
where the sum is over all $p$-dimensional cells of $X(q)$.
The definition is by induction on $l$.
For each $x \in X$ let $i_x: M \rightarrow X \times M$
be the inclusion $i_x(a) = (x,a)$.
For $l = 0$ we define $A(x)=i_x(F(x))$ for each $x \in X(q)_0$.
For every $(l-1)$-cell $C^{l-1} \subset X(q)_{l-1}$ 
assume that we have already defined
$A (C)$. Given a set $U \subset M$ and a bilipshitz diffeomorphism $f: U \rightarrow \R^n$
with $f(U)$ convex, a point $x \in U$
and a relative cycle $B \in \mathcal{Z}_i(C \times U, C \times (U \cap \partial M); G)$ let
$$cone_x(B) = g^{-1}(Cone_{g(p_C,x)}(g(B)))$$
where $p_C$ is the center point of
cell $C$, $g=(id_C,f): C \times U \rightarrow C \times \R^n $ and $Cone_{g(p_C,x)}(g(B))$ denotes the cone over $g(B)$
in $C \times f(U)$ with vertex $(p_C,f(x))$.
We will choose $f$ to be the exponential map
from point $p$ if $p$ lies in the interior of $M$, or map $E$
from Section \ref{theta-corners} if $p \in \partial M$.

%, with the following property.
%with the following properties:
%
%\begin{enumerate}
%    \item  for an open and dense subset of points 
%$x \in \partial C$ 
%$x \in C$ 
%we have that $proj_M(proj^{-1}_X(x) \cap A(C))$ is a PL $k$-cycle;

%\item there exists a sequence $q=q_0 < q_1 < q_2<...$
%and $\eps_i \rightarrow 0$, s.t. 
%discrete families of cycles $F^i: C(q_i)_0 \rightarrow \Zk$
%given be $F^i(x) = proj_M(proj^{-1}_X(x) \cap A(C))$
%are $c(l-1)\delta$-localized (with respect to a fixed  admissible collection
%$\{U_i^C \}$ that depends only on $C$) and $\eps_i$-fine.
%\end{enumerate}

%$$\F(proj_X^{-1}(x) \cap A(\partial C), proj_X^{-1}(y) \cap A(\partial C)) \leq $$
%for all $x,y \in \partial C$.
Define 
$A(\partial C) = \sum_{E \subset \partial C} (-1)^{i} A(E) $
where the sign $(-1)^i$ is defined in the standard way for the boundary operator
on a cubical complex, $\partial C =\sum_{E \subset \partial C} (-1)^{i} E$.
Let $\{U_i^C \}$ be an admissible collection of sets
corresponding to $\delta$-localized family $\{ F(x)\}_{x \in  C_0}$.
We fix a point $x_i \in U_i^C$ for each $i$; if $U_i^C$ intersects $\partial M$,
then we pick $x_i \in \partial U_i^C$.
We fix a vertex $v \in C_0$  and set 
$$A(C) = A(v) \times C + \sum_i cone_{x_i}([A(\partial C) - A(v) \times \partial C] \cap C \times U_i^C)$$
Observe that by induction and our definition of $\delta$-localized families
$A(\partial C) - A(v) \times \partial C$
is an $(l-1)k$-dimensional polyhedral relative cycle in $ \bigsqcup_i C \times U_i^C$,
so $A(C)$ is well-defined. 

It follows from the construction that $A$ has the following properties:
\begin{enumerate}
    \item  for an open and dense subset of points 
$x \in C$ and for an open and dense subset of points
$x \in \partial C$ 
we have that 
$$proj_M(proj^{-1}_X(x) \cap A(C))$$ is a polyhedral $k$-cycle;
\item there exists a sequence $q=q_0 < q_1 < q_2<...$
and $\eps_i \rightarrow 0$, s.t. 
discrete families of cycles $F^i: C(q_i)_0 \rightarrow \Zk$
given be $F^i(x) = proj_M(proj^{-1}_X(x) \cap A(C))$
are $c(l-1)\delta$-localized (with respect to a fixed  admissible collection
$\{U_i^C \}$ that depends only on $C$) and $\eps_i$-fine.
\end{enumerate}

Observe that if follows from the definition that if
$F_1$ and $F_2$ are two $\delta$-localized families
with $\mathcal{F}(F_1,F_2) < \eps$ for all $x \in X(q)_0$,
then for all sufficiently small $\eps>0$ 
the corresponding $(p+k)$-cycles $A_{F_1}$ and $A_{F_2}$
are homologous (cf. \cite[Section 1]{Gu1}). In particular, 
given a map $F: X \rightarrow \Zk$ we can define
$\alpha_{F} \in H_{p+k}(X \times M, X \times \partial M; G)$
corresponding to cycle $A_{F'}$, where $F'$ is
a $\delta$-localized approximation of $F$ obtained
by Theorem \ref{extension} and $\alpha_{F}$ is independent
of the choice of $F'$.

\begin{theorem} \label{Almgren map}
Two maps $F_0: X \rightarrow \Zk$ and
$F_1: X \rightarrow \Zk$
are homotopic if and only if 
 $\alpha_{F_0}=\alpha_{F_1} \in H_{p+k}(X \times M, X \times \partial M; G)$.
\end{theorem}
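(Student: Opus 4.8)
The plan is to prove the two implications of the equivalence separately, using the assignment $F \mapsto A_F$ and the two properties of the $A(C)$-construction recorded above. Recall that $X$ is a closed $p$-manifold (oriented when $G = \Z$), so that for any $\delta$-localized discrete family the chain $A_F$ is a genuine relative cycle in $\mathcal{Z}_{p+k}^{PL}(X\times M, X\times\partial M; G)$.

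\emph{Homotopic $\Rightarrow$ equal class.} Suppose $\Phi : X\times[0,1]\to \Zk$ is a homotopy with $\Phi_0 = F_0$, $\Phi_1 = F_1$, and view $X\times[0,1]$ as a $(p+1)$-dimensional cubical complex. First I would use Theorem \ref{approximation} together with Proposition \ref{dc2} to replace $\Phi$, without changing its restrictions to the two end-copies of $X$, by a homotopic $\delta$-localized discrete family $\bar\Phi$ on $(X\times[0,1])(q)_0$; its restrictions $\bar F_0,\bar F_1$ are $\delta$-localized approximations of $F_0,F_1$, and since $\alpha_{F_i}$ does not depend on the chosen approximation it suffices to show $[A_{\bar F_0}] = [A_{\bar F_1}]$. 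Form the cycle $A_{\bar\Phi}\in\mathcal{Z}_{p+1+k}^{PL}\big((X\times[0,1])\times M,\,(X\times[0,1])\times\partial M; G\big)$. Because $X$ is closed, $\partial(X\times[0,1]) = X\times\{0\}\sqcup X\times\{1\}$, and by construction $A_{\bar\Phi}$ restricts on the two ends to $A_{\bar F_0}$ and $A_{\bar F_1}$; hence $\partial A_{\bar\Phi} - (A_{\bar F_1} - A_{\bar F_0})$ is supported in $(X\times[0,1])\times\partial M$. I would then push $A_{\bar\Phi}$ forward under the projection $\pi\colon (X\times[0,1])\times M\to X\times M$ that forgets the $[0,1]$-coordinate. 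Since $k < n$ we have $p+1+k\le p+n = \dim(X\times M)$, so $\pi_\#A_{\bar\Phi}$ is a legitimate $(p{+}1{+}k)$-chain with $\partial(\pi_\# A_{\bar\Phi}) = \pi_\#\partial A_{\bar\Phi}$; using $\pi_\#(A_{\bar F_i}\times\{i\}) = A_{\bar F_i}$ and the fact that $\pi$ carries the $(X\times[0,1])\times\partial M$-supported error into $X\times\partial M$, we conclude that $A_{\bar F_1} - A_{\bar F_0}$ is a relative boundary, i.e. $\alpha_{F_0} = \alpha_{F_1}$.

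\emph{Equal class $\Rightarrow$ homotopic.} By Theorem \ref{approximation} I may assume $F_0, F_1$ are themselves $\delta$-localized discrete families, so that $A_{F_0}, A_{F_1}$ are polyhedral relative cycles with $[A_{F_0}] = [A_{F_1}]$ in $H_{p+k}(X\times M, X\times\partial M; G)$. Choose a polyhedral chain $W\in I_{p+1+k}^{PL}(X\times M; G)$ with $\partial W - (A_{F_1} - A_{F_0})$ supported in $X\times\partial M$. After subdividing and perturbing $W$ \emph{relative to $\partial W$} by an arbitrarily small PL move, I would arrange that $W$ is in general position with respect to the projection $q\colon X\times M\to X$, so that for every vertex $x$ of a sufficiently fine subdivision $X(q')_0$ the slice $W_x := \operatorname{proj}_M(q^{-1}(x)\cap W)$ is a polyhedral $(k{+}1)$-chain with $\partial W_x \equiv G_1(x) - G_0(x) \pmod{\partial M}$, where $G_i(x) := \operatorname{proj}_M(q^{-1}(x)\cap A_{F_i})$ are the slicings of $A_{F_0}$ and $A_{F_1}$ (well defined on a fine enough grid by property (1) of the construction of $A_F$). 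Exactly as in properties (1)--(2) of that construction, $G_0, G_1$ are $c\delta$-localized and fine, and are homotopic to $F_0, F_1$ respectively; likewise $x\mapsto W_x$ is a $c\delta$-localized, fine discrete family of $(k{+}1)$-chains, each filling $G_1(x) - G_0(x)$ rel $\partial M$.

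The remaining task --- and the heart of the theorem --- is to promote the discrete family of fillings $\{W_x\}$ to a continuous homotopy $H\colon X\times[0,1]\to\Zk$ with $H_0 = G_0$ and $H_1 = G_1$; concatenating with the homotopies $F_i\simeq G_i$ then gives $F_0\simeq F_1$. At the level of a single vertex this is easy: fixing a generic piecewise-linear sweeping function $g\colon M\to[0,1]$, the path $t\mapsto P_x(t) := G_0(x) + \partial\big(W_x\llcorner\{g\le t\}\big)$ is flat-continuous (since $\partial$ is $1$-Lipschitz in $\F$ and $t\mapsto\M(W_x\llcorner\{g\le t\})$ is absolutely continuous), lies in $\Zk$ for every $t$, and runs from $G_0(x)$ to $G_1(x)$. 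The difficulty is uniformity in the parameter: for adjacent vertices $x,y$ the fillings $W_x, W_y$ are close only in the flat metric, and cutting by a fixed level set need not keep $P_x(t)$ and $P_y(t)$ close. I expect this to be the main obstacle, and would handle it by carrying the construction one dimension up with the machinery of Section \ref{sec:approximation}: first build $H$ as a $\delta$-localized discrete family on $X(q'')_0\times I(1,q'')_0$, replacing the ``$Q$-chains'' of the proofs of Propositions \ref{extension} and \ref{discrete to continuous} by the sliced fillings $W_x$ and using the cutting functions $d_i$, $b_i$ and the radial contractions inside the admissible sets supplied by Lemma \ref{two admissible}, so that adjacent cycles differ inside a single admissible set; then interpolate to a continuous family by Proposition \ref{discrete to continuous}. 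Controlling, uniformly in $x$ and near the critical values of $g$, the behaviour of the boundaries of the sliced fillings, and checking that the resulting path family is fine enough for Proposition \ref{discrete to continuous} to apply, is where essentially all the technical content lies; by comparison the forward implication and the parametric transversality used to slice $W$ are routine.
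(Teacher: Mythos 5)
Your forward direction is essentially the paper's argument: extend the homotopy to a $\delta$-localized family on $X\times[0,1]$ via Section~\ref{sec:approximation}, form $A$ of the extended family, and read off that $A_{\bar F_1}-A_{\bar F_0}$ is a relative boundary (the paper leaves the projection $\pi$ implicit; you make it explicit, which is fine).

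The reverse direction, however, is genuinely incomplete, and you say so yourself. After slicing the filling chain $W$ over the parameter space, you propose to produce a path from $G_0(x)$ to $G_1(x)$ by sweeping $W_x$ with a level-set function $g\colon M\to[0,1]$ and taking $P_x(t)=G_0(x)+\partial(W_x\llcorner\{g\le t\})$, and you correctly identify that this is not uniformly controlled across the parameter: $W_x$ and $W_y$ for adjacent $x,y$ are only flat-close, so cutting by a fixed level of $g$ can make $P_x(t)$ and $P_y(t)$ far apart. Your proposed remedy (``carry the construction one dimension up with the machinery of Section~\ref{sec:approximation}'') is not an argument; it is a statement that the hard part remains to be done. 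In particular nothing you write controls the boundary slices $\partial W_x\llcorner\{g=t\}$ uniformly over $x$ and $t$, which is exactly where your approach breaks.

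The paper resolves this by a different and cleaner mechanism, which you did not find: it does not sweep through $M$ at all. Instead it takes a fine PL subdivision of the filling $B$ (with $\partial B=A_{F_1}-A_{F_0}$ rel $X\times\partial M$), enumerates its $(p+k+1)$-cells $C_1,\dots,C_N$, and defines the interpolating discrete families by
$$F^i(x)=\operatorname{proj}_M\bigl(\operatorname{proj}_X^{-1}(x)\cap\partial(B\setminus\textstyle\bigcup_{j\le i}C_j)\bigr).$$
Each step $i\to i+1$ changes the $(p+k)$-cycle only by $\partial C_{i+1}$, whose slice over any fiber $\operatorname{proj}_X^{-1}(x)$ has volume less than $\eps$ once the subdivision of $B$ is fine enough; choosing $q'$ large enough additionally makes each $F^i$ $\eps$-fine. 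The resulting sequence of $\eps$-fine, pairwise $\eps$-close discrete families starts at (a perturbation of) $F_1-F_0$ and ends at the zero map, and Theorem~\ref{approximation} then upgrades this to a continuous contraction. This cell-by-cell dissolution of $B$ sidesteps the uniform-in-$x$ level-set control your plan would require, and is the missing idea in your proposal.
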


\begin{proof}
One direction follows immediately from
the construction: if $F_0$ and $F_1$ are homotopic, then
$\alpha_{F_0}=\alpha_{F_1}$.
Indeed, let $F:[0,1] \times X \rightarrow \Zk $
be the homotopy. By Theorem \ref{extension} 
there exists a $\delta$-localized approximation 
$F'$ of $F$ and the restrictions $F'|_{i \times X}$
are $\delta$-localized
approximations of $F_i$ for $i =0,1$.
Applying our construction to $F'$
we obtain a $(p+1+k)$-chain $B$ with $\partial B = 
A_{F'|_{1 \times X}} - A_{F'|_{0 \times X}}$.

Now we will prove the other direction.
Let $F_0':  X(q)_0 \rightarrow \Zk$ and
$F_1': X(q)_0 \rightarrow \Zk$ be two $\eps$-fine
$\delta$-localized maps that $\eps$-approximate 
$F_0$ and $F_1$ correspondingly.

Define $A_{F_1'}$ and $A_{F_2'}$ as described above.
Assume that they represent the same homology class.
We would like to construct a homotopy from $F_1'$
to $F_2'$.
It is enough to construct a sequence of maps
$F^i: X(q')_0 \rightarrow \Zk $, $q'\geq q$, which are 

1) $\eps$-fine;

2) satisfy $\F(F^i(x), F^{i+1}(x)) \leq \eps$
for all $x \in X(q')_0$;

3) $\F(F^0(x), F_1'(y) - F_0'(y)) \leq c \eps $
for $x,y $ in the same cell of $X(q)$;

4) $F^N(x)$ is the empty cycle for all $x$.

By Theorem \ref{approximation} such a sequence of maps
would guarantee existence of a continuous homotopy contracting
$F_1-F_0$.

We will define sequence $F_i$ by defining a sequence
of $(p+k)$-cycle $A_i$ in $X \times M$ and intersecting them with fibers of $proj_X$.

We have that cycles $A_{F_1'}$ and $A_{F_2'}$  are constructed in such a way that for every $\eps>0$ picking a sufficiently fine set of
discrete points $X(q'')_0$ (i.e. $q''$ sufficiently large) we can guarantee that family $\{proj^{-1}(x) \cap A_{F_j'} \}$
is $\eps$-fine.
We observe that every $(p+k)$-cycle in general position 
will have this property. This follows from PL version of transversality,
which we explain in more detail below.

Fix a fine PL structure on of $M$ 
that is $(1+\eps)$-bilipschitz to the original metric.
Fix a cell $ E_X^p \times E^n_M $, $E_X \subset X$, $E_M \subset M$.
Let $C$ be a linear $m$-cell contained in $E_X^p \times E^n_M$, $m>0$.
Let $\theta(C)$ denote the angle that $C$ makes with fibers 
of $proj_X: E_X \times E_M \rightarrow E_X$, defined as follows.
Fix a point $a$ in the interior of  $C$.
Let $P_M$ denote the $n$-plane that
passes through $a$ and contains a fiber of $proj_X$.
Let $P_C$ denote the $m$-plane that
passes through $a$ and contains $C$.
%Let $\beta_1$ denote a basis of vectors
%spanning $P_C$ and 
%$\beta_2$ denote the basis of vectors spanning the $n$-plane that contains $P_M$.
If $m>p$ let $\beta = \{ v_1, ... , v_{m-p} \}$ be a set of linearly independent
vectors contained in $ P_C \cap  P_M$, otherwise set $\beta = \{ 0 \}$.
Define $\theta(C)$ to be the minimum over all angles between 
non-zero vectors $v_1 \in P_C$  perpendicular to $span(\beta)$
and $v_2 \in P_M$ perpendicular to $span(\beta)$.
Note that this definition is independent of the choice of $a$.
%We will say that a cell $C$ is in general position
%if $\theta(C) \neq 0$. 

%Similarly, if $C \subset E_X^p \times E_{\partial M}^{n-1}$, 
%we define $\theta_{\partial C}$ to be the angle between 
%$C$ and fibers of $proj_X: E_X \times E_{\partial M} \rightarrow E_X$.
%We will say that $C$ is in general position
%if $\theta_{\partial}(C) \neq 0$. 

We will say that a polyhedral chain (resp. relative cycle) $A^m$ 
in $X^p \times M^n$, $m \geq p$, is in general position
if the following conditions are satisfied:

\begin{enumerate}
\item $A \cap X \times \partial M$ is an $(m-1)$-dimensional 
polyhedral chain
(resp. cycle);

\item Each cell $C^i$ of $A$ is contained in the interior
of a cell $E^j$ of $X \times M$ with $j \geq i$;

\item For each cell $C^i$, $i>0$, of $A$ we have $\theta(C)>0$.
%
%\item $A \cap X \times \partial M$ is an $(m-1)$-dimensional polyhedral
%cycle and for each cell $C$ of $A$ contained in 
% $X \times \partial M $ we have $\theta(C)>0$ and $\theta_{\partial}(C) > 0$. 
\end{enumerate}

\begin{lemma} \label{generic}
If $A^m$ is a polyhedral relative cycle in general position, then
there exists an open and dense subset $G \subset X$,
such that the following holds:

(a) for every $x \in G$ we have that 
$proj_X^{-1}(x) \cap A$ is a polyhedral $(m-p)$-dimensional relative cycle
and $proj_X^{-1}(x) \cap C \cap X \times \partial M$ 
is a polyhedral $(m-p-1)$-dimensional cycle.

(b) for every $\eps>0$ there exists 
$\delta>0$, so that if $x,y \in G$ and $dist(x,y) < \delta$
then $\F (proj_M(proj_X^{-1}(x)), proj_M(proj_X^{-1}(y)) )< \eps$.
\end{lemma}

\begin{proof}
Let $(A)_0$ denote the 
$0$-skeleton of $A$ and let $G \subset X$
be the union of interiors of all $n$-dimensional
faces of $X$ minus $proj_X((A)_{0})$.
By \cite[Theorem 1.3.1]{Wi66} and our angle
condition $proj_X^{-1}(x)$ is
a $(m-p)$-dimensional  polyhedral cycle for each point $x \in G$,
and given $y_i \in G$ and a piecewise linear segment $L_i$ connecting $y_i$ to $x$,
$proj_X^{-1}(L_i)$ is a polyhedral $(m-p+1)$-chain with $\Vol(L_i) \rightarrow 0$ as $y_i \rightarrow x$. 
\end{proof}

%\begin{proof}
%For every top-dimensional cell $C^m \subset %A^m$
%let $B_C = proj_X(C) \setminus %proj_X(int(C))$
%and every cell $C^{m-1} \subset A^m \cap  X %\times \partial M$
%let $B_C = proj_X(C) \setminus proj_X(int(C))$.
%Note that $B_C$ has codimension greater or equal to 1 in $X$.
%Let $B $ be the union of all $B_C$ for $C^m \subset A^m$
%and $C^{m-1} \subset A^m \cap  X \times \partial M$,
%and $G= X \setminus B $.

%By the angle assumptions $\theta(C)>0$ and $\theta_{\partial}(C)>0$ we have 
%that 

%\end{proof}

Let $B$ denote a polyhedral $(p+k+1)$-chain with 
$\partial B = A_{F_1} - A_{F_2}$.
%Let $B_l$ denote the $l$-skeleton of $B$.
%For $B$ in general position we have 
%that for almost every $x \in X$ and every cell $C^{p+l} \subset B_l$ the fiber
%$proj_X^{-1} (x) \cap C$ is a polyhedral $(l+1)$-chain.
%and for every  $x \in X$ the fiber
%$proj_X^{-1} (x) \cap B$ is a $PL$ chain
%of dimension $\leq k+1$.
There exists a fine subdivision of 
 $B$, so that for any $(p+k)$-cell $C$ of $B$
 and almost every $x \in X$ we have 
 that $z_C(x) = proj_X^{-1} (x) \cap C$ is a polyhedral $k$-chain
 of volume less than $\eps$.
% (1) $\F(proj_M(proj_X^{-1} (x) \cap C), 0)< \eps$

Let $N$ denote the number of $(p+k+1)$-cells in $B$.
We can pick a large enough $q' \geq q$, so that
for every two adjacent vertices $x_1,x_2 \in X(q')_0$ and
every $(p+k)$-cell $C$ of $B$ we have

%(2) 
$$\F(proj_M(proj_X^{-1} (x_1) \cap C),proj_M(proj_X^{-1} (x_2) \cap C)) < \eps /N$$

%(Do we need to clarify this? The idea is that we can subdivide $B$ sufficiently finely,
%so that each cell is embedded linearly an by general position no cell of $B$ is parallel to
%the fibers $proj_X^{-1} (x) $  and by compactness there is some minimal angle they make with the
%fibers.
%This guarantees that for sufficiently large $q'$ the jump in the flat
%norm between vertices can be made arbitrarily small).

Let $\{ C_i\}_{i=1}^N$ be $(p+k+1)$-cells of $B$.
Define a sequence of maps $F_i: X(q')_0 \rightarrow \Zk$
as follows:
$$F_i(x) = proj_M (proj_X^{-1}(x) \cap \partial (B \setminus \cup_{j=1}^i C_j) )$$

By construction this defines a sequence of $\eps$-close
$\eps$-fine discrete maps and by Theorem \ref{approximation} we can construct
a homotopy from $F: X \rightarrow \Zk$ given by 
$F(x) = F_1(x) - F_0(x)$ to the $0$-map.
\end{proof}

Let $F_A: \pi_m(\mathcal{Z}_k(M^n;G))
\rightarrow H_{m+k}(M^n \times S^m;G)$
be a map defined as follows. 
Given a map $f: S^m \rightarrow \mathcal{Z}_k(M^n;G)$
representing class $[f] \in \pi_m(\mathcal{Z}_k(M^n;G)) $ we set
$F_A([f]) = \alpha_f \in H_{m+k}(M^n \times S^m;G)$.

\begin{theorem} \label{groupoid}
$F_A$ is a bijective
groupoid homomorpism.
\end{theorem}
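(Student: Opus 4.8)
The plan is to prove Theorem \ref{groupoid} by establishing two things: that $F_A$ is well-defined and a groupoid homomorphism, and that it is a bijection. The bijectivity part is exactly where Theorem \ref{Almgren map} does the heavy lifting, so the main work of assembling the proof is organizational. Let me sketch each piece.

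First I would address well-definedness and the groupoid structure. The source $\pi_m(\Zk[M^n;G])$ — strictly speaking a groupoid since no basepoint is fixed — has objects the cycles $z \in \mathcal{Z}_k(M;G)$ and morphisms the homotopy classes of maps $f : (S^m, *) \to (\mathcal{Z}_k(M;G), z)$, with composition given by the usual concatenation of spheres (pinching $S^m$ along an equator). The target $H_{m+k}(M^n \times S^m; G)$ should likewise be organized as a groupoid whose composition reflects the pinch map $S^m \to S^m \vee S^m$. That $F_A$ is well-defined on homotopy classes is the content of Theorem \ref{Almgren map}: homotopic maps $f_0 \simeq f_1$ give $\alpha_{f_0} = \alpha_{f_1}$. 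That $F_A$ respects composition follows from the explicit cycle-level description of $\alpha_f$: for the concatenated sphere one builds the discrete-to-continuous model on a cubical complex subdividing $S^m$ along the equator, and the associated $(m+k)$-cycle $A_{F}$ decomposes, up to a chain supported near the equator sphere $S^{m-1} \times M$ (which is $(m-1+k)$-dimensional, hence null in the relevant homology), as the sum of the two pieces. I would cite the construction of $A_F$ preceding Theorem \ref{Almgren map} and the remark there that $\varepsilon$-close $\delta$-localized families give homologous cycles.

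Next, bijectivity. Injectivity is immediate from the nontrivial direction of Theorem \ref{Almgren map}: if $F_A([f_0]) = F_A([f_1])$, i.e. $\alpha_{f_0} = \alpha_{f_1}$, then $f_0$ and $f_1$ are homotopic, so $[f_0] = [f_1]$. (One must check the two maps land in the same component, i.e. $f_0(*)$ and $f_1(*)$ are flat-homologous $k$-cycles — but this is forced by looking at the restriction of $\alpha$ to a fiber $\{x\} \times M$ over a generic point.) For surjectivity I would argue that every class $\beta \in H_{m+k}(M \times S^m; G)$ in the appropriate summand is realized by a polyhedral cycle $A$ in general position (using a fine PL structure on $M \times S^m$ and perturbing to achieve the angle condition of the general-position definition, exactly as in Lemma \ref{generic}); then Lemma \ref{generic} produces an open dense $G \subset S^m$ on which $x \mapsto \mathrm{proj}_M(\mathrm{proj}_X^{-1}(x) \cap A)$ is a $\delta$-localized $\varepsilon$-fine discrete family of $k$-cycles, and Theorem \ref{approximation}(1) (via Proposition \ref{dc2}) extends it to a continuous map $f : S^m \to \mathcal{Z}_k(M;G)$ with $\alpha_f = \beta$ by construction of $A_f$.

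**The main obstacle** I anticipate is not any single hard estimate — those are all delegated to the approximation machinery of Section \ref{sec:approximation} and to Theorem \ref{Almgren map} — but rather the bookkeeping around the groupoid (rather than group) formulation: one has to be careful that "$H_{m+k}(M \times S^m; G)$" is really the right target and decompose it so that the additive structure matches concatenation of spheres, handling the reduced-versus-unreduced and the $\tilde H_m(S^m) \otimes H_k(M)$-type splitting. In the oriented $G = \mathbb{Z}$ case one also needs $S^m$ (and all the subdividing cubical complexes) to be oriented, which is automatic, but the signs in $A(\partial C) = \sum_{E \subset \partial C}(-1)^i A(E)$ must be tracked through the concatenation argument. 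I expect this to be routine but delicate, so I would state the groupoid compositions explicitly at the outset and then verify compatibility cell-by-cell using the formula for $A(C)$.
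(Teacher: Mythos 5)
Your proposal is correct and follows essentially the same route as the paper: well-definedness and injectivity come directly from Theorem \ref{Almgren map}, the homomorphism property from the explicit cell-level construction of $A_F$, and surjectivity by representing a class by a polyhedral chain in general position, slicing via Lemma \ref{generic}, and completing with the approximation machinery of Section \ref{sec:approximation}. The paper's proof is terser (it merely asserts the homomorphism property ``follows from the construction''), whereas you supply the pinch-map/equator argument explicitly, but there is no substantive difference in approach.
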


\begin{proof}
By Theorem \ref{Almgren map} if $f: S^m \rightarrow \mathcal{Z}_k(M;G)$ and $g: S^m \rightarrow \mathcal{Z}_k(M;G)$
are two homotopic maps, then cycles 
$A_f$ and $A_g$ are homologous, so the map is well defined.
It also follows from the construction of $A_f$ that $F_A$ is a homomorphsim
and injectivity of $F_A$ follows by Theorem \ref{Almgren map}. 

Every homology class in $H_{m+k}(M^n \times S^m;G)$
can be represented by a polyhedral chain $A$ in general position, 
so that for a sufficiently large $q$ discrete family 
$\{proj_M(proj_X^{-1}(x) \cap A) \}_{x \in X(q)_0}$ represents a unique homotopy class in $\pi_m(\mathcal{Z}_k(M^n;G))$.
Hence, $F_A$ is also surjective.
\end{proof}

By the Kunneth formula we have 
$H_{m+k}(M^n \times S^m;G) \cong H_{m+k}(M^n;G) \oplus H_k(M^n;G) $.
If we restrict the domain of $F_A$ to
maps $f: S^m \rightarrow \mathcal{Z}_k(M^n;G)$ with a fixed a base point cycle $z \in \mathcal{Z}_k(M;G)$ 
we obtain the following isomorphism.

\begin{corollary}[Almgren isomorphism theorem] \label{AIT}
The homotopy group $\pi_m(\mathcal{Z}_k(M^n;G); \{ [z] \})$
is isomorphic to the homology group
$H_{m+k}(M^n;G)$.
\end{corollary}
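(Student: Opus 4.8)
The plan is to deduce the corollary from Theorem \ref{groupoid} by bookkeeping the effect of fixing a base point. First I would recall that Theorem \ref{groupoid} gives a bijective groupoid homomorphism $F_A:\pi_m(\mathcal Z_k(M^n;G))\to H_{m+k}(M^n\times S^m;G)$, where ``groupoid'' reflects that we are not yet fixing a base point: the source is the collection of free homotopy classes of maps $S^m\to\mathcal Z_k(M^n;G)$, with composition defined when base cycles match. Restricting to loops based at a fixed cycle $z$ turns the source into the honest homotopy group $\pi_m(\mathcal Z_k(M^n;G);\{[z]\})$, and the task is to identify the image of this restricted map inside $H_{m+k}(M^n\times S^m;G)$.

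Next I would apply the Künneth formula, as the excerpt already does: since $S^m$ has free homology, $H_{m+k}(M^n\times S^m;G)\cong H_{m+k}(M^n;G)\oplus\big(H_k(M^n;G)\otimes H_m(S^m;G)\big)\cong H_{m+k}(M^n;G)\oplus H_k(M^n;G)$. Under this splitting, the first summand $H_{m+k}(M^n;G)$ corresponds to the class of $A_f$ projected via $\mathrm{pt}\times S^m$ collapsed, i.e.\ to the part of the cycle $A_f$ that records how the family sweeps out homology of $M$ independently of the $S^m$-direction; this is exactly the ``constant'' contribution coming from the fixed base cycle $[z]$. The second summand $H_k(M^n;G)$ is $[z]\times[S^m]$ plus the genuinely varying part. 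The claim to verify is that fixing the base cycle $z$ pins down the $H_{m+k}(M^n;G)$-component of $\alpha_f$ to a value depending only on $[z]$ (indeed, inspection of the inductive construction of $A_F$ shows the $0$-cell contribution $A(x)=i_x(F(x))$ with $F(x)=z$ near the base point forces this component), so that $F_A$ restricted to base-point-preserving maps is a bijection onto the affine coset $H_{m+k}(M^n;G)\oplus\{c_{[z]}\}$ of the subgroup $H_{m+k}(M^n;G)$; translating by $c_{[z]}$ and using that $F_A$ is a homomorphism then yields the group isomorphism $\pi_m(\mathcal Z_k(M^n;G);\{[z]\})\cong H_{m+k}(M^n;G)$.

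The main obstacle I anticipate is not the Künneth computation but making precise the claim that the $H_{m+k}(M^n;G)$-summand of $\alpha_f$ is determined by $[z]$ alone, and that the restricted $F_A$ is surjective onto the corresponding coset. For surjectivity one must, given a class in $H_{m+k}(M^n;G)$, represent it by a polyhedral cycle in general position whose associated discrete family of slices $\{\mathrm{proj}_M(\mathrm{proj}_X^{-1}(x)\cap A)\}$ can be arranged to equal $z$ near a chosen base point of $S^m$; this should follow by the same general-position and transversality arguments (Lemma \ref{generic} and \cite[Theorem 1.3.1]{Wi66}) used in the proof of Theorem \ref{groupoid}, together with a homotopy supported near the base point provided by Theorem \ref{approximation}. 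Injectivity is immediate from Theorem \ref{Almgren map}, and homomorphism property from the construction of $A_f$, so once the base-point bookkeeping is settled the corollary follows.
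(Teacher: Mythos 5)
Your overall strategy coincides with the paper's (which treats this as an essentially one-line deduction from Theorem~\ref{groupoid}): restrict the bijective groupoid homomorphism $F_A$ to loops based at $z$ and apply K\"unneth. The plan is right, but you have the two K\"unneth summands interchanged, and the middle of your argument contradicts its own conclusion.

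In the decomposition
\[
H_{m+k}(M^n\times S^m;G)\;\cong\;\bigl(H_{m+k}(M^n;G)\otimes H_0(S^m;G)\bigr)\oplus\bigl(H_k(M^n;G)\otimes H_m(S^m;G)\bigr),
\]
the summand that a fixed base cycle $z$ pins down is the \emph{second} one, $H_k(M^n;G)\otimes H_m(S^m;G)\cong H_k(M^n;G)$. Indeed, a constant family $F\equiv z$ produces $A_F=z\times S^m$, whose class is $[z]\otimes[S^m]$, landing entirely in the second summand; and for an arbitrary $F$ the $H_k(M^n;G)$-component of $\alpha_F$ is computed by slicing $A_F$ over a generic $x_0\in S^m$, which gives $[F(x_0)]=[z]$ (constant in $x_0$ because $S^m$ is connected and homology class is locally constant under flat-norm perturbation). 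The first summand $H_{m+k}(M^n;G)\otimes H_0(S^m;G)\cong H_{m+k}(M^n;G)$ is what varies with the family and is what $\pi_m$ is identified with. Your text asserts the opposite --- "the first summand $H_{m+k}(M^n;G)$ corresponds to... the `constant' contribution coming from the fixed base cycle $[z]$" and "fixing the base cycle $z$ pins down the $H_{m+k}(M^n;G)$-component"; followed consistently this would yield $\pi_m(\mathcal Z_k(M^n;G);\{[z]\})\cong H_k(M^n;G)$, which is not the Almgren isomorphism. Your final sentence then quietly reverts to the correct coset $H_{m+k}(M^n;G)\times\{c_{[z]}\}$, leaving the proposal internally inconsistent. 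Swapping the roles of the two summands in the middle paragraph repairs the argument; your remarks on injectivity, the homomorphism property, and surjectivity via general position and a base-point-fixing homotopy (Lemma~\ref{generic}, Theorem~\ref{approximation}) are otherwise in line with the paper's construction.
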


For future applications we will also need the following
Proposition, which allows us to replace a contractible 
family of relative cycles with a contractible family of 
absolute cycles.

\begin{proposition} \label{relative to absolute}
Let $F: X(q) \rightarrow \Zk$ be a contractible map.
For every $\eps>0$ there exists a map $G: X \rightarrow \mathcal{Z}_k(M, G)$,
such that $$\mathcal{F}( F(x) \llcorner int(M), G(x) \llcorner int(M) ) < \eps$$
\end{proposition}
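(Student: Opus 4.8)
The plan is to produce the absolute family $G$ by gluing onto each relative cycle $F(x)$ a suitable piece of the boundary together with a filling of its restriction near $\partial M$, arranged continuously in $x$ and with negligible mass near $\partial M$. Concretely, first I would invoke the approximation machinery of Section \ref{sec:approximation}: by Theorem \ref{approximation} we may replace $F$ by a $\delta$-localized, $\eps'$-fine discrete family (homotopic to the original, hence still contractible) and, using the Remark after Proposition \ref{extension}, by a family of polyhedral relative cycles with $F(x)\cap\partial M=\partial F(x)$ a polyhedral $(k-1)$-cycle in $\partial M$. So it suffices to work with $\delta$-localized polyhedral families and to make the modification only in a collar $E_{\partial M}(\partial M\times[0,r_0))$ of the boundary, leaving $F(x)$ untouched on $int(M_{r})$ for a small $r$; the flat distance between $F(x)\llcorner int(M)$ and the new $G(x)\llcorner int(M)$ will then be bounded by the mass of the alteration inside the collar, which we control directly.

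Next I would use the collar map $E=E_{\partial M}$ from Lemma \ref{lem: boundary with corners} to push the boundary trace $\partial F(x)$ of each relative cycle inward and cap it off. Since $F$ is contractible, by Corollary \ref{AIT} (the Almgren isomorphism theorem, applied to the boundary manifold $\partial M$, or rather to its collar) the family of $(k-1)$-cycles $x\mapsto \partial F(x)\in\mathcal{Z}_{k-1}(\partial M;G)$ is itself null-homotopic: its Almgren class in $H_{p+k-1}(\partial M\times X;G)$ is the restriction of that of $F$, which vanishes. Null-homotopy of $\{\partial F(x)\}$ gives, again via the construction of Section \ref{sec:Almgren} (or directly via Theorem \ref{Almgren map} applied to a null-homotopy), a continuous family of $k$-chains $\{K(x)\}$ in $\partial M$ with $\partial K(x)=\partial F(x)$. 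One must however ensure the mass of $K(x)$ is small: this is exactly where $\delta$-localization pays off — the family $\{\partial F(x)\}$ differs only inside a $\delta$-admissible collection of sets in $\partial M$, so one fills cell-by-cell by coning inside those small sets, as in the proof of Proposition \ref{discrete to continuous}, obtaining $\M(K(x))\lesssim \delta\cdot\max_w\M(\partial F(w))$. Pushing $K(x)$ slightly into the collar via $E$ (or coning $\partial F(x)$ to a nearby inner copy) produces the cap, and $G(x):=F(x)-(\text{that cap})$, suitably localized, is then an absolute cycle: its boundary, which lived in $\partial M$, has been filled away.

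Finally I would check continuity and the flat estimate. Continuity of $G$ follows from continuity of $F$ and of the filling family $K$, both supplied by the interpolation in Proposition \ref{discrete to continuous}; the flat bound $\mathcal{F}(F(x)\llcorner int(M),G(x)\llcorner int(M))<\eps$ follows because $F$ and $G$ agree outside the collar $E_{\partial M}(\partial M\times[0,r_0))$ and the collar piece of $G$ has mass $\lesssim \delta\cdot\max_w\M(F(w))+ (\text{collar thickness})\cdot\M(\partial M)$, both of which can be made $<\eps$ by first choosing $\delta$ and $r_0$ small (shrinking $r_0$ in Lemma \ref{lem: boundary with corners}) and then the discretization fine. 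I expect the main obstacle to be the bookkeeping that makes the cap family genuinely continuous in $x$ while keeping its mass controlled — that is, verifying that the null-homotopy of the boundary traces can be realized by a $\delta$-localized family of fillings supported in the collar, rather than by some family whose chains wander over all of $\partial M$; this is precisely the point where one must re-run the localized extension argument of Proposition \ref{extension} for the boundary family $\{\partial F(x)\}$, rather than quote an off-the-shelf nullhomotopy.
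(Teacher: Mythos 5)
Your approach is in the same spirit as the paper's: both exploit the Almgren machinery and the vanishing of $\alpha_F$ to produce caps. But the paper works with the total $(p+k)$-cycle $A_F \subset X \times M$; contractibility gives a $(p+k+1)$-chain $B$ with $\partial B - A_F \subset X \times \partial M$, $\partial B$ is an absolute cycle, and one simply slices $\partial B$. You instead try to apply the machinery to the boundary trace family $x \mapsto \partial F(x) \in \mathcal{Z}_{k-1}(\partial M; G)$ and assemble $G = F - K$ by hand. This is where the trouble starts.

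First, the claimed estimate $\M(K(x)) \lesssim \delta \cdot \max_w \M(\partial F(w))$ is false. What $\delta$-localization controls is the \emph{difference} $\partial F(x) - \partial F(y)$ for $x,y$ in the same cell, hence the interpolations between nearby fillings; it gives no bound whatsoever on the mass of a filling of a single $\partial F(x_0)$ in $\partial M$, which may be comparable to $\Vol(\partial M)$. The counterexample Proposition at the end of Section \ref{sec:Almgren} is precisely a cautionary tale about conflating ``small variation'' with ``small filling.'' In fact the estimate is also unnecessary: if $K(x)$ is kept supported on $\partial M$, then $K(x)\llcorner int(M)=0$, so $G(x)\llcorner int(M)$ agrees exactly with the polyhedral approximant of $F(x)$ restricted to $int(M)$; the only contribution to $\mathcal{F}$ comes from the polyhedral/$\delta$-localized approximation step. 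Your extra move of pushing $K$ into the collar is what manufactures the spurious need for mass control — drop it and the estimate (3) becomes vacuous.

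Second, and more substantively, the step ``null-homotopy of $\{\partial F(x)\}$ gives a continuous family of fillings'' presupposes that $x \mapsto \partial F(x)$ is itself an $\eps$-fine (or continuous) family in $\mathcal{Z}_{k-1}(\partial M; G)$. This is not automatic: $\mathcal{F}(F(x), F(y))$ being small in the relative flat norm does not bound $\mathcal{F}(\partial F(x), \partial F(y))$, and $\delta$-localization only tells you where the difference $\partial F(x)-\partial F(y)$ is supported, not that it is flat-small. (Indeed, making boundary traces into a controllable family is exactly the labor performed by the parametric coarea Theorem \ref{coarea} — it does not come for free.) The paper's route via $\partial B$ sidesteps this entirely: once $\partial B$ is in general position, Lemma \ref{generic} guarantees the slices form an $\eps$-fine discrete family of absolute cycles, and Theorem \ref{approximation} finishes. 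To make your version rigorous you would have to run the general-position slicing argument on $\partial B \cap (X \times \partial M)$ anyway, at which point you have reproduced the paper's proof.
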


\begin{proof}
Let $A_{F}$ denote a $(p+k)$-dimensional relative cycle in $X \times M$
corresponding to $F$.
Since $F$ is contractible, by Theorem \ref{Almgren map}
there exists a cycle $B$ with $\partial B - A_F \subset X \times \partial M$.
For $\partial B$ in general position and $q$ sufficiently large 
we can define an $\eps/2$-fine map
$G: X(q)_0 \rightarrow \mathcal{Z}_k(M, G)$ defined by
$G(x) = proj_M(proj_X^{-1}(x)) \cap \partial B$.

By Theorem \ref{approximation} we can complete $G$ to a continuous family of
absolute cycles approximating $F$.
\end{proof}

We finish this section with a counterexample to parametric isoperimetric inequality
for contractible families of cycles with integer coefficients.

\begin{proposition}
Let $N>0$. There exists a contractible family of $0$-cycles
$F_N: S^1 \rightarrow \mathcal{Z}_0(S^1;\Z)$, such that 
\begin{itemize}
    \item $\M(F_N(x)) \leq 2$ for all $x \in S^1$;
\item any family of fillings
$H:S^1 \rightarrow I_1(S^1;\Z)$ with $\partial H(x) = F_N(x)$ satisfies
$\M(H(x_0))> N$ for some $x_0 \in S^1$.
\end{itemize}

\end{proposition}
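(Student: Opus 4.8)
The idea is to build $F_N$ so that, as $x$ traverses $S^1$ once, the two points making up each $0$-cycle $F_N(x)$ rotate around the target $S^1$ in a way that forces any continuous filling to ``wind up'' and accumulate mass. Concretely, parametrize the target circle as $\R/\Z$. I would take $F_N(x)$ to be a pair of points whose positions depend on $x$ in such a way that the pair is always at distance $\leq 1$ (so $\M(F_N(x)) \leq 2$, using that each point contributes mass $1$), but so that the ``midpoint'' of the pair, or rather the natural $1$-chain interpolating between the two points, is forced to sweep across the whole circle $N$ times as $x$ goes around $S^1$. A clean way to do this: let one point be fixed at $0 \in \R/\Z$ for all $x$ and let the second point move; but a single moving point on $S^1$ returning to its start gives a family homotopic to a constant only after accounting for the degree, so instead I would use a pair of points $\{a(x), b(x)\}$ that swap roles. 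The construction in Section \ref{sec:Almgren} referenced in the text (``for every integer $N$ there exists a contractible 1-parameter family of $0$-cycles in $\mathcal{Z}_0(S^1,\Z)$ of mass $\leq 2$'') is exactly this object, so I would make it explicit here.

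\textbf{Key steps.} First, define $F_N$ explicitly: think of $S^1 = \R/\Z$ as the parameter and the target, and set $F_N(x) = \partial\big([y_0(x), y_1(x)]\big)$ where $[y_0(x),y_1(x)]$ denotes the ``short'' oriented arc — but arranged so that $y_0, y_1$ each execute $N$ full turns while staying within distance $1/(N+1)$, say, of each other. Because each $y_i$ returns to its starting point after $x$ traverses $S^1$, and because the two points can be continuously interchanged, the resulting loop in $\mathcal{Z}_0(S^1;\Z)$ is null-homotopic; I would verify contractibility via Theorem \ref{Almgren map} / Corollary \ref{AIT}, computing that $\alpha_{F_N} = 0 \in H_1(S^1;\Z)$, or more elementarily by exhibiting a homotopy to the constant $0$-cycle that shrinks the arc. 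Second, bound the mass: $\M(F_N(x)) = 2$ identically by construction (each cycle is two distinct points with $\Z_2$... no, $\Z$ coefficient $\pm 1$). Third, and this is the heart of it, suppose $H: S^1 \to I_1(S^1;\Z)$ is any continuous family with $\partial H(x) = F_N(x)$. Since $H(x)$ is a $1$-chain on $S^1$ with prescribed boundary, we can write $H(x) = H_0(x) + k(x)[S^1]$ where $H_0(x)$ is the ``canonical short filling'' and $k(x) \in \Z$ — the subtlety is that $H_0$ is not continuous where the two boundary points cross, so continuity of $H$ forces $k(x)$ to jump, and tracking these jumps around $S^1$ shows $k$ must pass through all values in a range of size $\gtrsim N$. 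Hence $\M(H(x_0)) \geq |k(x_0)| - 1 > N$ for some $x_0$. I would phrase this last step using the coefficient/winding-number structure: on $S^1$, a $1$-chain with integer coefficients is locally constant away from its boundary, its coefficient function takes values differing by $1$ across each boundary point, and the total variation of this function as we go around, realized continuously in $x$, is the obstruction.

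\textbf{Main obstacle.} The hard part is making the continuity-forces-large-coefficient argument rigorous, because $I_1(S^1;\Z)$-valued continuity is in the flat topology, not pointwise on coefficient functions, and the ``canonical short filling'' $H_0(x)$ is genuinely discontinuous at the crossing times. The right move is: pick finitely many sample parameters $x = t_0 < t_1 < \dots < t_m = t_0$ around $S^1$, chosen fine enough (using the approximation results of Section \ref{sec:approximation}, or just uniform continuity of $H$) that $\mathcal{F}(H(t_j), H(t_{j+1})) < 1/2$; then for consecutive samples the integer coefficient of $H$ on a fixed small arc disjoint from both boundary pairs is well-defined and changes by a controlled amount, while crossing a boundary-point-passage between samples shifts it by $\pm 1$. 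Summing these local changes around the loop and comparing with the $N$-fold winding of the boundary points yields $\sum |\Delta \text{coeff}| \geq N$, hence $\max_x \M(H(x)) \geq N/2$; rescaling $N \mapsto 2N+2$ in the construction gives the stated bound $> N$. A secondary, more cosmetic obstacle is verifying contractibility of $F_N$ cleanly — I expect the explicit homotopy (slide one endpoint toward the other along the short arc, which is well-defined once the arc is short) to work, but one must check it stays continuous in the flat topology at the crossing, which it does because there the two points coincide and the cycle is $0$.
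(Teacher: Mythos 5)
Your construction does not work, and the reason is a cheap filling you overlooked. If $a(x)$ and $b(x)$ both execute $N$ full turns around $S^1$ while staying within distance $1/(N+1)$ of each other, then the family of short oriented arcs $H_0(x)$ from $a(x)$ to $b(x)$ is a \emph{continuous} family in $I_1(S^1;\Z)$: the short arc between two points depends continuously (in flat norm) on the pair as long as the pair avoids antipodal configurations, and with $|a(x)-b(x)|\le 1/(N+1)<1/2$ it always does. This $H_0$ satisfies $\partial H_0(x)=F_N(x)$ and $\M(H_0(x))\le 1/(N+1)<1$, so the proposition would be false for your $F_N$. Your claim that ``$H_0$ is not continuous where the two boundary points cross'' is the source of the error: at a crossing the short arc degenerates continuously to the zero chain, so nothing jumps; the genuine discontinuities of the short-arc filling occur at \emph{antipodal} configurations, which your construction deliberately avoids. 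There is therefore no obstruction forcing $k(x)$ to vary, and the rest of the argument has nothing to work with.

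The paper's construction builds the obstruction into the boundary family in a different way. There, $F(x)$ is empty for most $x$; one passes through $4N$ short windows in which a cancelling pair of points is created at a single location, separates, sweeps \emph{once all the way around} the target $S^1$, then merges and vanishes. Crucially, the first $2N$ windows all have one orientation and the last $2N$ the opposite orientation, realized by taking the graph $Z=\bigcup\gamma_i\cup\bigcup\beta_i$ in $S^1\times S^1$ with the $\gamma_i$ near $x\in(0,1/2)$ oriented one way and the $\beta_i$ near $x\in(1/2,1)$ the other. A continuous filling $H$ then corresponds, via the Almgren dictionary, to a $2$-chain $\tau$ with $\partial\tau=Z$, and its locally constant multiplicity function jumps by the same sign across all $\gamma_i$ and by the opposite sign across all $\beta_j$; since the same-sign jumps are grouped, the multiplicity swings by $2N$, so the slice of $\tau$ over $x=0$ or $x=1/2$ is a circle of multiplicity at least $N$. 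Your formula $H(x)=H_0(x)+k(x)[S^1]$ with $k$ forced through a range of size $\gtrsim N$ is the right intuition for that final step, and your worry about flat-topology continuity versus pointwise coefficients is real and is exactly what the translation to the single $2$-chain $\tau$ resolves; but the forcing has to come from a boundary family that revisits the zero cycle many times with a net-zero but grouped orientation pattern, not from two nearby points co-winding.
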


\begin{proof}
Consider $[0,1] \times S^1$ and let $\{ \gamma_i = \{t_i\} \times S^1\}$ be a collection
of $2N$ disjoint closed curves in $[0,1] \times S^1$ all oriented the same way 
and with $t_i \in (0,\frac{1}{2})$; let $\{ \beta_i = \{s_i\} \times S^1\}$ be a collection of $2N$ disjoint closed curves all oriented in the opposite way to 
$\gamma_i$ and with $s_i \in (\frac{1}{2},1)$. Observe that we can make 
a PL perturbation of curves $\gamma_i$ and $\beta_i$, 
so that fibers of the projection map 
$p:[0,1] \times S^1 \rightarrow [0,1]$ intersect $\bigcup \gamma_i \cup \beta_i$ in at most $2$ points (and when the intersection has exactly two points they have  opposite orientation).

Identifying the endpoints of $[0,1]$ we can think of curves $\gamma_i$ and
$\beta_i$ as lying in $S^1 \times S^1$. Let $X= S^1$, then for sufficiently
large $q$ we have that $\{ F(x) = p^{-1}(x) \cap (\bigcup \gamma_i \cup \beta_i): x \in X(q)_0\}$
is an $\eps$-fine discrete family of cycles.
Since $Z=\bigcup \gamma_i \cup \beta_i$ is null-homologous
as a 1-cycle in $S^1 \times S^1$ the family $F(x)$
is contractible. Every family of fillings $H$
of $F$ then corresponds to a filling $\tau$ of $Z$.
It is easy to see, using a winding number argument,
that either $p^{-1}(0)\cap \tau$
or $p^{-1}(\frac{1}{2})\cap \tau$ is a circle of 
multiplicity $\geq N$.
It follows that $\M(H(x_0)) \geq N$ for some $x_0 \in S^1$.
\end{proof}

\section{Parametric coarea inequality}\label{sec:coarea}
Let $\Omega\subset \R^3$ be a domain with $\partial \Om$ piecewise-smooth boundary
with $\theta$-corners.
Let $\Om_{\eps} = \Om \setminus E(\partial \Om \times [0,\eps]) $,
where $E$ is the ``exponential map'' from Section \ref{lem: boundary with corners},
and $\Si = \partial \Om_{\eps}$.
In this section we will prove a somewhat stronger version of Conjecture \ref{conj: coarea}
for $n=3$ and $k=1$.

\begin{theorem} \label{coarea}
Fix $\eta>0$.  For all $\eps \in (0, \eps_0)$, $\delta_1>0$ and $p\geq p_0(\Om, \eps)$ the following holds.
Let $F: X^p \rightarrow \Zone(\Omega, \partial \Omega; \Z_2)$ be
a continuous map with no concentration of mass.
There exists a map $F': X \rightarrow I_1(cl(\Om_\eps); \Z_2)$, such that 

\begin{enumerate}
    \item $\partial F'$ is a continuous family of $0$-cycles in $\partial \Om_{\eps}$ that is $\delta_1$-localized on $X(q')$ for sufficiently large $q'$;
    \item $\mathcal{F}(F(x)\llcorner  \Om_\eps, F'(x)\llcorner  \Om_\eps) < \eta$;
\item $\M(F'(x)) \leq \M(F(x) + c(n) \M(\partial \Om) (p^{\frac{1}{2}}+\frac{\M(F(x))}{\eps \sqrt{p}})$;
\item  $\M(\partial F'(x)) \leq c(n) \M(\partial \Omega) p$
%const ( \frac{\M(F(x))}{\eps} + p)$
\end{enumerate}

Moreover, if $F$ is a $p$-sweepout of $\Omega$, then 
$\partial F': X \rightarrow \mathcal{Z}_0(\partial \Om_{\eps}; \Z_2)$
is a $p$-sweepout of $\partial \Om_{\eps}$.
%by $0$-cycles and for sufficiently large $q$
%map $\partial F': X(q) \rightarrow \mathcal{Z}_0(\Sigma, \mathbb{Z}_2)$ is $\delta_1-$localized on $X(q)$.
\end{theorem}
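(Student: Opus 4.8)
The plan is to run the classical coarea construction --- restrict $F(x)$ to the sub-level set $\Om_{r}$ of the collar coordinate and record the boundary slice $F(x)\cap\partial\Om_{r}$ --- while making the corrections that the spiral non-example of the introduction forces on us: the cutting level $r$ must be allowed to vary with $x$, chosen by a pigeonhole over many thin slabs so that the slice has small mass, and the slice must then be pushed onto a fixed fine skeleton of $\partial\Om_{\eps}$ so that its mass is bounded even when it happens to be large, the discrepancy being absorbed into a correction chain supported near the boundary. I would begin with the reductions of Section \ref{sec:approximation}: by Proposition \ref{approximation no concentration} together with the piecewise-linear approximation described after Proposition \ref{extension}, replace $F$ by a $\delta$-localized family of polyhedral relative $1$-cycles which is flat-close to $F$, has $\M(F(x))$ increased by at most $\eta/10$, and still has no concentration of mass; by Theorem \ref{approximation} the homotopy class is unchanged, so a $p$-sweepout stays a $p$-sweepout. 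I would also fix a piecewise-linear structure on $\Om$ that is $(1+\eps)$-bilipschitz to the metric and, after an arbitrarily small perturbation, arrange that the finitely many cutting levels appearing below are transverse to every $F(x)$, so that the slices $\langle F(x),\rho,r\rangle$ are polyhedral $0$-cycles depending continuously, in the flat topology, on $x$, as in Lemma \ref{generic}.

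Fix $h\asymp p^{-1/2}$. Inside the collar $E(\partial\Om\times[\eps,2\eps])$, which lies in $\Om_{\eps}$, I would fix a cubical subdivision whose cross-sections $\partial\Om\times\{t\}$ are subdivided at scale $h$ --- giving a skeleton $\mathcal{T}$ with $\#\mathcal{T}_{0}\asymp\M(\partial\Om)\,p$ vertices and cross-sectional $1$-skeleton of total length $\asymp\M(\partial\Om)\sqrt{p}$ --- and whose radial direction is cut into $m\asymp\sqrt{p}$ equal slabs $S_{1},\dots,S_{m}$. For each $x$ one has $\sum_{j}\M(F(x)\llcorner S_{j})\le\M(F(x))$, so at least $m/2$ of the slabs are \emph{$x$-good} in the sense that $\M(F(x)\llcorner S_{j})\le 2\M(F(x))/m$, and for an $x$-good slab the classical coarea inequality (Theorem \ref{classical coarea}) applied inside $S_{j}$ produces a transverse level $r_{j}(x)\in S_{j}$ with $\M(\langle F(x),\rho,r_{j}(x)\rangle)\lesssim\M(F(x))/\eps$. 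Subdividing the parameter complex $X$ finely enough, relative to the flat modulus of continuity of $F$, I may assume each cell $C$ of $X$ carries one slab index $j(C)$ that is $x$-good for all $x\in C$, with the good index sets of adjacent cells overlapping in more than $m/3$ indices.

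Over a cell $C$, writing $r(x)=r_{j(C)}(x)$, I would set $F'|_{C}(x)=F(x)\llcorner\Om_{r(x)}+D_{C}(x)$, where $D_{C}(x)$ is a $1$-chain, supported in the radial collar between $\partial\Om_{r(x)}$ and $\partial\Om_{\eps}$ together with the $1$-skeleton of $\mathcal{T}$, with $\partial D_{C}(x)=-\langle F(x),\rho,r(x)\rangle+z_{C}(x)$; here $z_{C}(x)$ is the $0$-cycle obtained from the slice by a parametric Federer--Fleming deformation for $0$-cycles in the cross-sectional surface, pushing it onto $\mathcal{T}_{0}$ and out to $\partial\Om_{\eps}$. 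Then $\partial F'|_{C}(x)=z_{C}(x)\subset\mathcal{T}_{0}\subset\partial\Om_{\eps}$, so $\M(\partial F'(x))\le\#\mathcal{T}_{0}\asymp\M(\partial\Om)\,p$, giving $(4)$; the mass of $D_{C}(x)$ decomposes into the Federer--Fleming routing part, of size $\lesssim h\,\M(\langle F(x),\rho,r(x)\rangle)\lesssim\M(F(x))/(\eps\sqrt{p})$, and the part on the $1$-skeleton of $\mathcal{T}$, of size $\lesssim\M(\partial\Om)\sqrt{p}$, which are the two error terms of $(3)$; and the difference $F(x)\llcorner\Om_{\eps}-F'(x)\llcorner\Om_{\eps}$ is, up to a $\delta$-localized reparametrization, a Federer--Fleming push of a chain of bounded mass confined to a collar whose radial width tends to $0$ as $p\to\infty$, hence has flat size $<\eta$, giving $(2)$. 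On an overlap $C\cap C'$, the two definitions differ only by the content of $F(x)$ in the slabs strictly between $j(C)$ and $j(C')$ and by the radial sweep of $z$ across those slabs; by $x$-goodness and the control on the good index sets, this discrepancy stays within the error terms of $(2)$--$(4)$, so I would glue by coning inside $\delta$-admissible sets exactly as in Proposition \ref{discrete to continuous}, combining admissible collections via Lemma \ref{two admissible}. Feeding the resulting discrete $\delta_{1}$-localized family to Proposition \ref{dc2} yields a continuous $F'$ with $\partial F'$ a continuous $\delta_{1}$-localized family of $0$-cycles, which is $(1)$.

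For the last assertion I would pass to the Almgren cycles of Section \ref{sec:Almgren}: the construction above realizes $A_{\partial F'}$ as homologous, inside $X\times\partial\Om_{\eps}$, to the slice $\langle A_{F},\rho,r\rangle$ of $A_{F}\in\mathcal{Z}_{p+1}(X\times\Om,X\times\partial\Om;\Z_{2})$ at a generic level $r$, so on homotopy classes the assignment $F\mapsto\partial F'$ represents the ``restrict to $\Om_{r}$, then take boundary'' map $\mathcal{Z}_{1}(\Om,\partial\Om;\Z_{2})\to\mathcal{Z}_{0}(\partial\Om_{\eps};\Z_{2})$. By Theorem \ref{groupoid} this map pulls the generator $\lambda$ of $H^{1}(\mathcal{Z}_{0}(\partial\Om_{\eps};\Z_{2});\Z_{2})$ back to the generator $\bar\lambda$ of $H^{1}(\mathcal{Z}_{1}(\Om,\partial\Om;\Z_{2});\Z_{2})$, whence $(\partial F')^{*}(\lambda^{p})=F^{*}(\bar\lambda^{p})\neq 0$ whenever $F$ is a $p$-sweepout. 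I expect the main obstacle to be the reconciliation carried out in the middle two paragraphs: the spiral non-example shows that no continuous choice of cutting level can exist, so continuity of $F'$ must be purchased by the slab-to-slab interpolations and the Federer--Fleming routing of slices onto the fixed skeleton, and the delicate point is that these modifications can be arranged to cost at most the error terms of $(2)$--$(4)$ \emph{simultaneously} in $\M(F'(x))$, $\M(\partial F'(x))$, and $\F(F(x)\llcorner\Om_{\eps},F'(x)\llcorner\Om_{\eps})$.
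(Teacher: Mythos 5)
Your outline identifies the right ingredients (classical coarea over thin slabs, a fine $\asymp p^{-1/2}$ skeleton of the boundary, Federer--Fleming pushes of the slice, and the reductions of Section \ref{sec:approximation}), and the structure ``cut per slab, push the slice onto the skeleton, interpolate'' resembles what the paper actually does. But the interpolation step, which is the crux of the theorem, is not supplied: you propose to ``glue by coning inside $\delta$-admissible sets exactly as in Proposition \ref{discrete to continuous},'' and this cannot work. Proposition \ref{discrete to continuous} requires the discrepancy between adjacent values to be supported in a $\delta$-admissible collection of balls. In your construction the discrepancy at a cell interface is (up to the localized perturbation) the piece $F(x)\llcorner(\Om_{r_{j(C)}(x)}\setminus\Om_{r_{j(C')}(x)})$ together with the radial sweep of the slice between the two cutting levels. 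This chain has small mass, of order $\M(F(x))/\sqrt p$, but its support is an entire slab $\partial\Om\times[r_1,r_2]$ of the collar; it is nowhere near $\delta$-localized when $\delta$ is much smaller than the slab width $\asymp\eps/\sqrt p$. This is exactly the failure mode the spiral non-example is designed to exhibit: small mass does not give localized support, and radial contraction of the whole collar piece can produce intermediate slices of arbitrarily large boundary mass, destroying bound (4).

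The paper's proof is built precisely to circumvent this. The cutting level $s(x)$ is chosen per \emph{vertex}, not per cell, and the cycle is first collapsed into $\Sigma=\partial\Om_\eps$ via the projection $C_s$, so all the action happens on the $(n-1)$-surface. Then $\Sigma$ is triangulated into $\asymp p\M(\partial\Om)$ triangles $Q_i$, and the interpolation between the vertex values is done \emph{triangle by triangle} with the conical-collar operator $\Phi_\rho^i$ assembled into the $\Delta_i$ formula, with scaling variables $\rho_j^i(x)$. The quantitative heart of the argument is the Inductive Property (in particular its item (ii)): a monotone convex-combination bound on $\sum_i\rho\text{-weighted boundary masses}$ that is preserved by always retracting toward the vertex $y_i(D)$ minimizing the slice mass in each triangle, and the auxiliary fillings $f_i(x)\subset\partial Q_i$ that keep the per-triangle boundary contribution at $O(1)$ throughout the homotopy. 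None of this bookkeeping appears in your sketch, and without it the claim that the cell-boundary discrepancies ``stay within the error terms of (2)--(4) simultaneously'' is an assertion, not an argument. To repair the proposal you would need either to reproduce the $\Delta_i$/Inductive-Property mechanism, or to replace it with a genuinely different device that provably controls $\M(\partial F'(x))$ at every intermediate stage of the interpolation, and the latter is the actual content of the theorem.
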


\begin{proof}

By Proposition \ref{approximation no concentration}
we can replace family $F$ with a $\delta$-localized family,
which is arbitrarily close to $F$ in flat norm, while
increasing the mass by an arbitrarily small amount. 
Hence, without any loss of generality we may assume that 
$F$ is $\delta$-localized for some very small 
(compared to $\eta$ and $\eps$) $\delta>0$.

\subsection{Proof strategy} We give a brief informal overview of the proof strategy.
For each value of $x \in X(q)_0$ we can use coarea inequality to find $s(x)$,
so that the restriction $F(x) \llcorner \Om_{\eps-s(x)}$ satisfies a good bound
on boundary mass. We then project the part sticking outside
of $\Om_{\eps}$ onto $\Sigma = \partial \Om_\eps$ and denote the resulting
chain (that will be a relative cycle in $\Om_{\eps}$) by $C_{s(x)}(F(x))$ (see precise 
definitions in subsection \ref{notation}).
This gives us a way to define the family on the $0$-skeleton $X(q)_0$.
The challenge is to extend this family to the higher dimensional skeleta of $X(q)$.

For two adjacent vertices $x$, $y$ we have that $I=C_{s(x)}(F(x))- C_{s(y)}(F(y))$
is a chain supported in $\Sigma$. Recall that $F$ is $\delta$-localized and so we 
can assume that for any two adjacent vertices the values of $s(x)$ and $s(y)$ are chosen so that 
$F(x)-F(y)$ is supported  away from $\partial \Om_{\eps-s(x)}$
and $\partial \Om_{\eps-s(y)}$. It follows that we can deform $I$ into a chain
$I' = C_{s(x)}(F(x))- C_{s(y)}(F(x))$ and that we have a good bound 
on the mass of $\partial I'$. To interpolate between $C_{s(x)}(F(x))$
and $C_{s(y)}(F(y))$ we would like to contract $I$, while controlling its mass and boundary mass. 
Note that the mass of $I$ may be as large as that of $F(x)$ or $F(y)$. So to do this we subdivide
$\Sigma$ into small triangles $\{Q_i\}$ with the goal of contracting $I$ radially triangle by triangle. % to some point $q_i \in Q_i$. 
However, we have no control over how large the mass of the intersection of $I$ with $\partial Q_i$ is
(but can assume that $F$ was slightly perturbed so that the intersection has finite mass),
so contracting $(I)_i = I \llcorner Q_i$ may increase the boundary mass by an arbitrary amount.
%\big(C_{s(x)}(F(x))- C_{s(y)}(F(y))\big)$.
To solve this problem we add to each $C_{s(x)}(F(x))$ a chain $\sum_i f_i(x)$, where each
$f_i(x)$ is supported in $\partial Q_i$ and is a kind of ``filling'' of
the $0$-chain $C_{s(x)}(F(x)) \llcorner \partial Q_i$, reducing its mass
to $\leq 1$. 

\begin{figure} 
	\centering
	\includegraphics[scale=0.7]{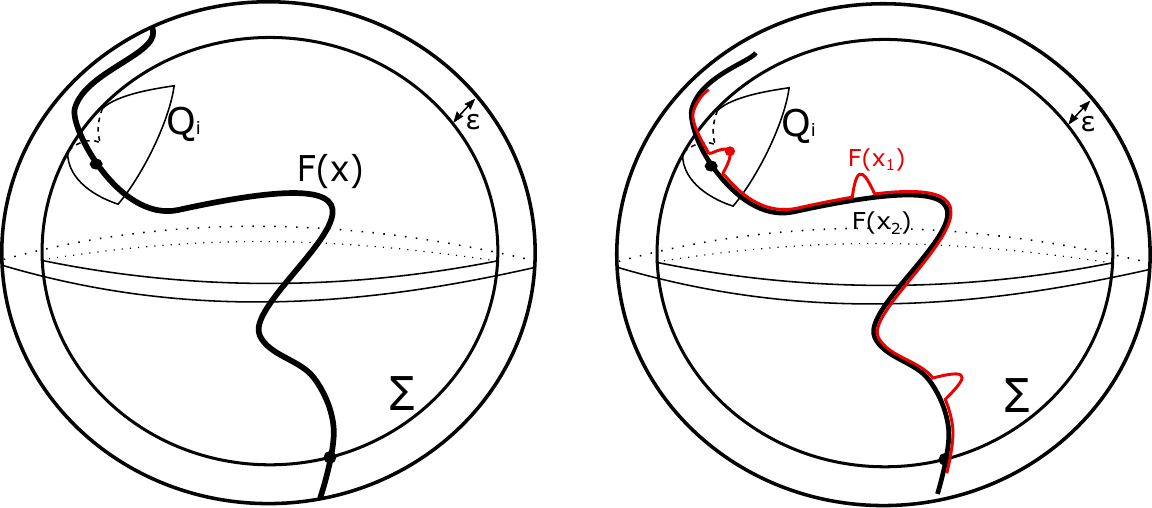}
	\caption{$F(x_1)$ and $F(x_2)$ are two cycles with $x_1, x_2 \in C \cap X(q)_0$.
	We use coarea inequality to cut them in $\Om \setminus \Om_{\eps}$.
	Then we project the part sticking out outside of $\Om_\eps$ 
	onto $\Si = \partial \Om_{\eps}$}.
	\label{fig:coarea1}
\end{figure}

With this in mind we will define $F'(x)= \sum_i C_{s(x)}(F(x))\llcorner Q_i + f_i(x) + Cone(\partial F(x) \llcorner Q_i)$ on the $0$-skeleton $X(q)_0$. The last term is the cone over the boundary of 
$C_{s(x)}(F(x))$ in $Q_i$ with vertex $q_i \in Q_i$. Adding the cone makes some of the interpolation
formulas simpler as we extend $F'$ to higher dimensional faces of $X(q)$. The interpolation process is
illustrated on Fig \ref{fig:interpolation}.

Let's ignore, for simplicity, the difference between values of $F$ on adjacent
vertices, since they are contained in a collection of tiny balls away from where we ``cut'' $F(x)$. Interpolation procedure then can be thought of as radial scaling of terms like $[C_{s(x_l)}(F(x)) - C_{s(x_j)}(F(x))] \cap Q_i +f_i(x_l)- f_i(x_j)$
for different vertices $x_l$ and $x_j$ in the 0-skeleton of a cell in $X(q)$. On Fig \ref{fig:interpolation} on the left
we see $(F'(x_4))_i = (C_{s(x_4)}(F(x)))_i+f_i(x_4) +Cone$, where $s(x_4)$ corresponds to the cut that is closest to $\partial \Om$. Then we have cuts corresponding to $s(x_1)$,
$s(x_2)$ and $s(x_3)$ indicated on the same picture. The cut corresponding
to $s(x_2)$ has mass $1$ and the other two cuts have mass $3$.
The interpolation will involve scaling the difference between these cuts as on the figure on the right and adding a portion of the cone over
$q_i$ to connect the boundaries. Inductively we will assume that
on the boundary of a cell $C$ the family of chains is of this form
and then linearly change the scaling factors to homotop the family 
in $Q_i$ to a cycle $F'(x_j)\llcorner Q_i$ 
%(plus the cone and $f_i(x_j)$)
that has the least boundary mass (in the case of the figure this
is $x_j = x_2$ or $x_j = x_4$).

\begin{figure} 
	\centering
	\includegraphics[scale=0.7]{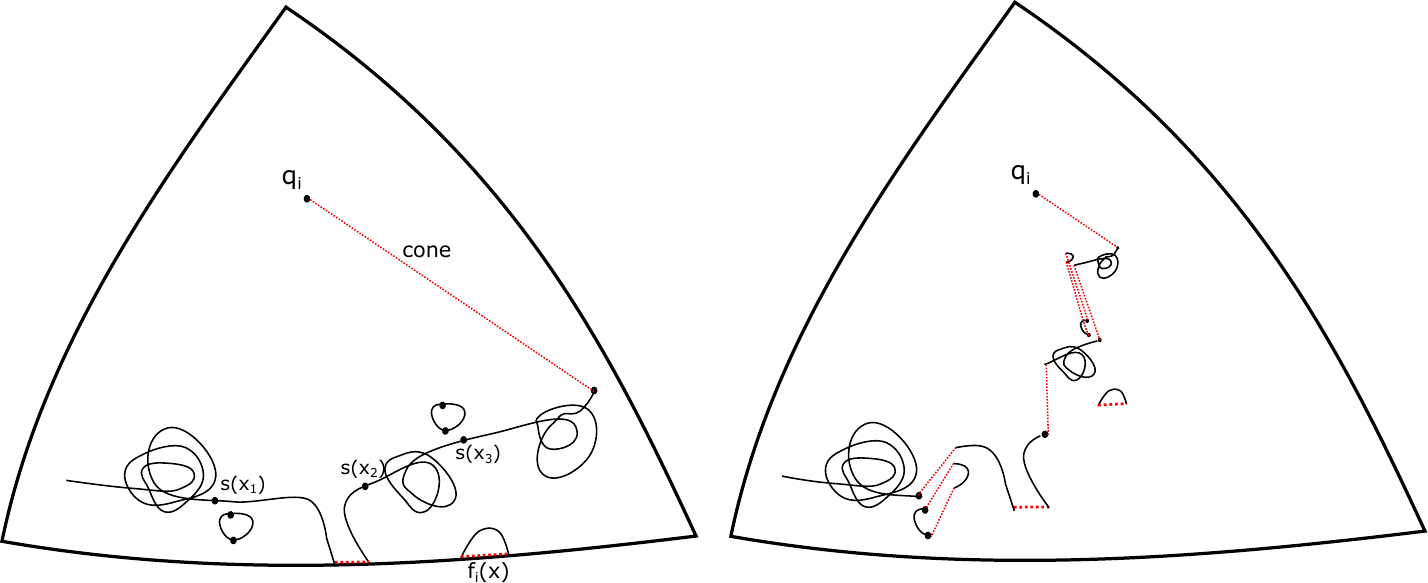}
	\caption{}%{$F(x_1)$ and $F(x_2)$ are two cycles with $x_1, x_2 \in C \cap X(q)_0$.
%	We use coarea inequality to cut them in $\Om \setminus \Om_{\eps}$.
%	Then we project the part sticking out outside of $\Om_\eps$ 
%	onto $\Si = \partial \Om_{\eps}$}.
	\label{fig:interpolation}
\end{figure}

\subsection{Notation}\label{notation}
We need to make the following definitions:

\begin{itemize}
\item    Let
$\pi: \Om \setminus \Om_{\eps} \rightarrow \Si$ be the projection map.

\item
%Given a $\delta$-localized family $F: X(q)_0 \rightarrow \Ztworel$ 
Given a $k$-chain $\tau \in I_1(\Om; \Z_2)$ and $s \in [0,\eps]$
let $L_s(\tau) = \pi(\tau \llcorner 
%cl(N_s(\Om_\eps) \setminus \Om_\eps ) ) 
%E(\partial \Om \times [\eps-s,\eps]))
(\Om_{\eps -s} \setminus \Om_\eps))
\subset \Si$
and $C_s(\tau) = (\tau \llcorner \Om_\eps)+L_s(\tau) \subset cl(\Om_\eps)$.

\item
Let $[\tau]_{[s_1,s_2]}= C_{s_2}(\tau) - C_{s_1}(\tau)$.

\item For $x \in X(q)$ let $B(x)$ denote the cell of $X(q)$ of smallest
dimension, such that $x$ lies in its interior; if $x \in X(q)_0$,
then $B(x) = x$.

%\item \label{fine triangulation}
%Fix a fine triangulation $T$ of $X$ and 
%let $\{ t_0, ... , t_M \}$ be the vertices of $(T)^0$.
%Assume that $\{ z_t \}$ satisfies $|(z_{t_i} - z_{t_j}) \cap U|< \varepsilon$
%for all open sets $U \subset Q^3$ and every pair
% $t_i,t_j$ of adjacent vertices in $(T)^0$
% (this will be satisfied for a sufficiently fine
%triangulation if the family is continuous in the mass norm).
%We will put more conditions on $\varepsilon$ later
%to ensure that it is small enough for our purposes.

\item
%Given a $\delta$-localized family $F: X(q)_0 \rightarrow \Zone(\Om, \partial \Om;\Z_2)$
For each vertex $x \in X(q)_0$ we apply applying coarea inequality Theorem \ref{classical coarea} (here we assume that $\eps\leq \eps(\Om)$ is sufficiently small) to find $s(x) \in [\frac{\eps}{2},\eps]$, such that
$$\M(\partial C_{s(x)}(F(x))) \leq \frac{1.5}{\eps-4\delta} \M(F(x) \llcorner cl(\Om \setminus \Om_\eps))\leq \frac{2}{\eps}  \M(F(x) \llcorner cl(\Om \setminus \Om_\eps))$$
and, moreover, we have that 
%$\partial \Om_\eps$
$\partial \Om_{\eps-s(x)}$
is disjoint from a $\delta$-admissible family $\{ U_i\}$.

In particular, for any $x, y \in X(q)_0$ that lie in a common
face of of $X(q)$ we have that $C_{s(x)}(F(x)-F(y))$
is an absolute cycle.

%\item
%Given $k$-cell $C \subset X(q)$ we fix a vertex $x(C)$ in the $0$-skeleton of cell $C$,
%so that $\M(\partial C_{s(x(C))}(F(x)) ) \leq \M (\partial C_{s(x)}(F(x)) )$
%for all vertices $x \in C_0$.
%$s(x(C)) \leq s(x)$
%for all $x \in C \cap X(q)_0$.
%Similarly, we fix a vertex $y(C)$
%with $s(y(C)) \geq s(x)$
%for all $x \in C \cap X(q)_0$.

\item For $p > p_0 (\Om, \eps)$ we
can triangulate $\Sigma$ by subdividing it into $N'$ subsets $Q_i$, 
$N' \leq const \ p \M(\partial \Om)$, with disjoint interiors 
and piecewise smooth boundaries,
such that there exists a $(1+\eps^2)$-bilipschitz diffeomorphism
$P_i$ from $Q_i$
to a convex simplex $Q_i' \subset \R^2$, 
$diam (Q_i) \leq p^{-\frac{1}{2}}$ and $\M(\partial Q_i) \leq p^{-\frac{1}{2}}$.

\item 
For each  triangle $Q_i$ fix a point $q_i$ in the interior of $Q_i$.
Given 
%a point $q \in Q_i$ and 
a cycle $z$ let $Cone_i(z) = P_i^{-1}(Cone_{P_i(q_i)}(P_i(z\llcorner Q_i)))$ denote the cone over $z\llcorner Q_i$ with vertex $q_i$.

\item Let $\Psi_{\rho}^i: Q_i \rightarrow Q_i$ be the radial scaling (towards $q_i$) map
$\Psi_{\rho}^i (x) = P_i^{-1}(P_i(q_i) + \rho(P_i(x)-P_i(q_i)))$, $\rho \in [0,1]$.
%here the sum is the vector sum in R^n

\item Given a $k$-chain $\tau$, $(k-1)-$cycle $x$ and $\rho \in [0,1]$ 
define the following ``conical collar'' map:
$$\Phi_{\rho}^i(\tau,x) = Cone_i(x) - Cone_i(\Psi_{\rho}^i(x))
+ \Psi_{\rho}^i( \tau)$$
We will drop superscript $i$ whenever it is clear to which region
the map is being applied.

In the special case when $x = \partial \tau$ we have that
$\Phi_{\rho}(\tau,x)$ fixes the boundary of $\tau$, and replaces the chain with a sum  of a 
shrunk copy of itself and a piece of the cone over $\partial \tau$.
More generally, we have $\partial \Phi_{\rho}(\tau,x) = x 
- \Psi_{\rho}( x) +\partial \Psi_{\rho}( \tau) $.

\item We will use the following notation: given a chain $I$
we let $(I)_i = I \llcorner Q_i$.

\item Let $k\geq 2$. Given chains
$\tau_1, \dots, \tau_k$ and
$0 \leq \rho_1 \leq \dots \leq \rho_{k-1} \leq 1$
define a chain $\Delta_i((\tau_1, \dots, \tau_k),(\rho_1, \dots, \rho_{k-1}))$
in $Q_i$:

\begin{align*}
   \Delta_i((\tau_j)_{j=1}^k,(\rho_j)_{j=1}^{k-1})=
   & (\tau_1)_i+ \sum_{j=1}^{k-1}
   \Phi_{\rho_1 \dots \rho_j}((\tau_{j+1}-\tau_j)_i,(\partial  \tau_j)_i) \\
   &+ Cone_i(\Psi_{\rho_1...\rho_{k-1}}((\partial \tau_k)_i) )
\end{align*}
For $k=1$ we can write
$$\Delta_i(\tau) = (\tau)_i+ Cone_i((\partial \tau)_i)= \Delta_i((\tau, \tau'),(0))$$
where $\tau'$ can be any chain.

\item For each $x \in X(q)_0$ we define $f_i(x)$ to
 be a $1$-chain in $I_1(\partial Q_i; \Z_2)$ with the following property.
 Let $V_i$ denote the set of vertices of $Q_i$, then
 $\partial f_i(x)- \big( C_{t(x)}(F(x))\big)_i \llcorner \partial Q_i \subset V_i$.
%\item
%Given a cycle $z \in \Zone(\Omega, \partial \Omega;\Z_2)$
%$x \in X(q)_0$ 
%$0 \leq s_1 \leq \dots\leq s_k \leq \eps$ and
%$0 \leq \rho_1 \leq \dots \leq \rho_{k-1} \leq 1$ define
%$$\Delta_i(z, (s_j)_{j=1}^k,(\rho_j)_{j=1}^{k-1})=
%\Delta_i(C_{s_j}(z),(\rho_j)) $$
%C_s(z)_{s_1} + \sum_{j=1}^{k-1} \Phi_{\rho_1 \dots \rho_j}(C_s(z)_{s_{j+1}}-C_s(z)_{s_j},(\partial  C_s(z)_{s_j})_i)
%+ Cone_i(\partial C_s(z)_{s_k})$$
%For $k=1$ case we write
%$$\Delta_i(z,s) = C_s(z)_i+ Cone_i((\partial C_s(z))_i)$$
%
%Observe that $\Delta_i$ has the following properties:
%$$ \M(\partial \Delta_i (\cdot)) \leq 1$$
%For any $0 \leq s_1 \leq s \leq s_2 \leq \eps$ we have
%$$\Delta_i(z, s) = \Delta_i(z, (s_1,s), (0)) = \Delta_i(z,(s,s_2),(1))$$
%For any $s' \in [s_j, s_{j+1}]$ we have
%$$ \Delta_i(z, (s_1, \dots, s_k),(\rho_1,\dots, \rho_{k-1}))=
%\Delta_i(z, (s_1, \dots, s_{j}, s, s_{j+1},\dots s_k),(\rho_1,\dots,\rho_{j},1, \rho_{j+1}, \dots ,\rho_{k}))$$
%For $s' \in [s_k, \eps]$ 
%$$ \Delta_i(z, (s_1, \dots, s_k),(\rho_1,\dots, \rho_{k-1}))=
%\Delta_i(z, (s_1, \dots, s_k,s'),(\rho_1,\dots,\rho_{k},0))$$
\end{itemize}

\subsection{Definition of \texorpdfstring{$F'$}{TEXT} on \texorpdfstring{$X(q)_0$}{TEXT}}
After an arbitrarily small perturbation of the family we may assume that
 $C_{s(x)}(F(x))$ intersects $ \bigcup \partial Q_i$ transversely
 and $\partial C_{s(x)}(F(x)) \llcorner \bigcup \partial Q_i$
 is empty for each $x \in X(q)_0$.

 We define 
 \begin{equation} \label{eq:base of induction}
     F'(x) = 
%     C_{s(x)}(F(x))+ 
 F(x) \llcorner \Omega_{\eps}+
     \sum_i \Delta_i(C_{s(x)}(F(x)))+f_i(x)
 \end{equation}

Observe that with this definition
if $x$ and $y$ are vertices in a  cell $D$ of $X(q)$, then there exists a cycle $e(x,y)$ of length $\leq \delta$ and contained in a $\delta$-admissible collection of open sets in $cl(\Om_\eps)$, s.t.
$$F'(x)= F(y) \llcorner \Omega_{\eps} + \sum_i \Delta_i(C_{s(x)}(F(y)))+f_i(x) +e(x,y)$$
%In particular, we have
%$$F'(x)-F'(y) = e(x) + \sum_i ([F(y)]_{s(x),s(y)})_i + Cone_i((\partial [F(y)]_{s(x),s(y)})_i)+ f_i(x)-f_i(y)$$

\subsection{Inductive property}
Assume that we defined $F'$ on the $k$-skeleton of 
$X(q)$, so that it satisfies the following property:

\vspace{0.1in}
\noindent
(Inductive property for $k$-skeleton $X(q)_k$) For every
$p'$-dimensional cell $C^{p'}$ of $X(q)$, $k \leq p' \leq p$ and point $y \in C$ the following holds.
Let $(x_j)_{j=1}^{2^{p'}}$ denote the set of points
in $C\cap X(q)_0$ and assume that they are numbered
so that $s(x_{j_1}) \leq s(x_{j_2})$ if $j_1 < j_2$.
%and $t(D) = \min_{j} s(x_j)$.
Then there exist functions 
\begin{itemize}
    \item $\rho_j^i:C \cap X(q)_k \rightarrow [0,1]$, $j\in \{1,...,2^{p'}-1\}$, $i\in\{1,...,N'\}$. Let 
%    $n_i(x) =\# \{\rho_j^i(x):0 < \rho_j^i(x) < 1,  j\in \{1,...,2^{p'}-1\} \} $,
    $$n_i(x) =\# \{\prod_{j=1}^{l} \rho_j^i(x):l\in \{1,...,2^{p'}-1\} \}-1 $$
    ($n_i(x)$ will correspond to the number of distinct collars in $Q_i$),
    then the following properties hold:
    \begin{enumerate}[i]
%     \item  $0 < \rho_j^i(x) < 1$ only if $x_j \in B(x) \cap X(q)_0$;
%        \item $0 < \rho_j^i(x) < 1$ for at most $dim(B(x))$ values of $i \in \{1,...,N'\}$.
        \item $\sum_{i=1}^{N'} n_i(x) \leq dim(B(x)) \leq k $, moreover, for
        all $x_j \notin B(x)_0$ we have $\rho_j^i(x) = 1$ if $s(x_j)< \min_{x \in B(x)_0} \{s(x) \}$
        and $\rho_j^i(x) = 0$ if $s(x_j)= \max_{x \in B(x)_0} \{s(x) \}$;
        \item 
        \begin{gather*}
        \sum_i (1-\rho_1^i(x))\M((\partial (C_{s(x_1)}(F(y)))_i))\\
        +\sum_i\sum_j (\rho_1^i(x)...\rho_{j}^i(x)-\rho_1^i(x)...\rho_{j+1}^i(x)) \M((\partial (C_{s(x_j)}(F(y)))_i))\\
+\sum_{i} \rho_1^i(x)...\rho_{2^{p'}-1}^i(x) \M((\partial (C_{s(x_{2^{p'}})}(F(y))))_i)\leq \frac{2 \M(F(x))}{\eps}
        \end{gather*}
    \end{enumerate}

    \item $e:C \cap X(q)_k \rightarrow \Zone(cl(\Om_\eps), \Sigma;\Z_2) $ of relative cycles of length $\leq c(k) \delta$ and contained in a $c(k)\delta$-admissible collection of sets, such that

%    \item $h:C \cap X(q)_k \rightarrow I_1(\bigcup_i \partial Q_i;\Z_2) $,
%    and contained in a $\delta$-admissible collection of open sets in $cl(\Om_\eps)$
 %   \item family $f:C \cap X(q)_k \rigtharrow I_1(\Sigma;\Z_2)$ with $\M(f(x)) \leq k\sup_i|\partial Q_i|$,
\end{itemize}
such that 
\begin{align} \label{eq:inductive step}
    F'(x) =
%    & C_{s(x_1)}(F(y))
    & F(y)\llcorner \Omega_{\eps} + e(x)
    +\sum_{i}^{N'} \Big( \Delta_i((C_{s(x_j)}(F(y)))_{j=1}^{2^{p'}},
(\rho_j(x))_{j=1}^{2^{p'}-1}) \\
            & + f_i(x_1)+  \sum_{j=1}^{2^{p'}-1} \Psi_{\rho_1^i(x)...\rho_{j}^i(x)}^i (f_i(x_{j+1})-f_i(x_j)) \Big) \nonumber 
\end{align}
for all $x \in C \cap X(q)_k$. 
\vspace{0.1in}

We claim that our definition
(\ref{eq:base of induction}) for $k=0$ satisfies the inductive assumption.
Indeed, let $x \in X(q)_0$ and $y$ lie in a $p'$-dimensional
cell $D$ of $X(q)$ that contains $x$. 
Setting $\rho_j^i(x) =1$ for all 
$j$, such that $s(x_j) < s(x)$, and $\rho_j^i(x) =0$
for all $j$, such that $s(x_j) \geq s(x)$, we can write (\ref{eq:base of induction}) as
\begin{align*}
    F'(x) =& F(x) \llcorner \Omega_{\eps}+
\sum_{i} \Big( \Delta_i((C_{s(x_j)}(F(x)))_{j=1}^{2^{p'}},
(\rho_j(x))_{j=1}^{2^{p'}-1}) \\
            & + f_i(x_1)+ \sum_{j=1}^{2^{p'}-1} \Psi_{\rho_1^i(x)...\rho_{j}^i(x)}^i (f_i(x_{j+1})-f_i(x_j)) \Big)
\end{align*}
Let $G(x,y)$ denote the cycle obtained by replacing
$F(x)$ in the expression above with $F(y)$:
\begin{align*}
    G(x,y)=& F(y)\llcorner \Omega_{\eps}+
\sum_{i} \Big( \Delta_i((C_{s(x_j)}(F(y)))_{j=1}^{2^p},
(\rho_j(x))_{j=1}^{2^{p'}-1}) \\
            & + f_i(x_1)+ \sum_{j=1}^{2^{p'}-1} \Psi_{\rho_1^i(x)...\rho_{j}^i(x)}^i (f_i(x_{j+1})-f_i(x_j)) \Big)
\end{align*}

Then using the fact that $F(x)$ is $\delta$-localized and
our choices of $s(x)$ (that guarantee $\partial C_{s(x)}(F(x)) = 
\partial C_{s(x)}(F(y))$ if $x$ and $y$ lie in the same cell of $X(q)$)
we obtain that $e(x) = F'(x)-G(x,y)$ is a cycle of length less
than $\delta$. Property (i) of functions $\rho_j^i$ is immediate and
property (ii) 
follows since $\M(\partial C_{s(x)}(F(x)))\leq \frac{2\M(F(x))}{\eps}$.

Next we show that the inductive property implies the desired
mass and boundary mass bounds.

\begin{lemma} \label{reduction to inductive}
Suppose the inductive property is satisfied for $k=p'=p$ 
and  $y \in C^p \subset X(q)$.
Then 
\begin{enumerate}
    \item $\mathcal{F}(F(y)\llcorner  \Om_\eps, F'(y)) < \eta$;

\item $\M(F'(y)) \leq \M(F(y)) +c(n)(\frac{\M(F(y))}{\eps}\frac{1}{\sqrt{p}} +  \M(\partial \Om) \sqrt{p})$;

\item $\M(\partial F'(y)) \leq c(n) \M (\partial \Om) p $.
\end{enumerate}
\end{lemma}

\begin{proof}
The first inequality follows immediately from (\ref{eq:inductive step}).
%the definition of $F'$ since $F$ is assumed to be  $\delta$-localized.

Now we prove the second inequality. 
%We take $y=x$ and plug it into the 
%expression (\ref{eq:inductive step}).
Observe that 
\begin{align*}
&\M\Big(\Phi_{\rho}^i\big((C_{s(x_{j+1})}(F(y))-C_{s(x_{j})}(F(y)))_i\big)\Big) \\
&\leq \rho(\M(([F(y)]_{[s(x_{j}), s(x_{j+1})]})_i) )+ (1-\rho)\M ((\partial C_{s(x_{j})}(F(y)))_i)
\end{align*}

and $$ \M (\Psi_{\rho}^i (f_i(x_{j+1})-f_i(x_j))) \leq \rho \M(\partial Q_i)$$
Unravelling the definition of $\Delta_i$ and using
property (ii) of functions $\rho_{j}^i(y)$ we obtain:
\begin{align*}
    \M(F'(y)) &\leq  \M(F(y)) + \sum_i \sum_{j=1}^{2^p-1} \Big( \M(Cone_i(\Psi^i_{\rho_1^i(y)...\rho_{j}^i(y)}((\partial C_{s(x_j)})_i)\\
    &-\M(Cone_i(\Psi^i_{\rho_1^i(y)...\rho_{j+1}^i(y)}((\partial C_{s(x_j)})_i)
               + \M(\Psi^i_{\rho_1^i(y)...\rho_{j}^i(y)} (f_i(x_{j+1})-f_i(x_j))) \Big)\\
              & \leq \M(F(y))+ \frac{2\M(F(y))}{\eps}\frac{1}{\sqrt{p}}+N'\frac{p}{\sqrt{p}} \\
              & \leq \M(F(y))+ \frac{2 \M(F(y))}{\eps}\frac{1}{\sqrt{p}} +  c(n) \M(\partial \Om) \sqrt{p}
\end{align*}
%Here we used property 2. of functions $\rho_{j}^i(x)$.

To prove inequality (3)
observe that 
\begin{gather*}
    \M(\big(\partial C_{s(x_{j})}(F(y)) +\Delta_i(C_{s(x_{j})}(F(y)),C_{s(x_{j+1})}(F(y)), \rho)+\\
    + \partial \Psi^i_\rho(f_i(x_{j+1})-f_i(x_j))\big)_i)\leq 4
\end{gather*}
where we may have one boundary point at the tip of the cone $q_i\in Q_i$ 
due to an odd number of
boundary points in $(\partial C_{s(x_{j+1})}(F(y)))_i$
and at most $3$ boundary points $[F(y)]_{[s(x_{j}), s(x_{j+1})]} \llcorner \partial Q_i +f_iF(y)$ at the vertices of triangle $Q_i$.
The total number of vertices in triangulation of
$\Sigma$ is bounded by $c'(n) p \M(\partial \Om)$.
Hence, using the first property of functions $\rho_{j}^iF(y)$, we obtain 
\begin{align*}
\M(\partial F'(y)) & \leq \sum_i^{N'} \M( (\partial F'(x))_i ) \\
    & \leq 4N' + \sum_{i,j} \M(\partial \Psi_{\rho_1^i(x)...\rho_{j}^i(x)}^i (f_i(x_{j+1})-f_i(x_j)))\\
    & \leq 4 c'(n) p \M(\partial \Om) + \sum_i n_i(y) \leq c(n) p \M(\partial \Om)
%\sum_i  \partial \Delta_i((C_{s(x_j)}(F(x)))_{j=1}^{2^p},
%(\rho_j(x))_{j=1}^{2^p-1})+
%+\sum_{i,j} \partial \Psi_{\rho_1^i(x)...\rho_{j}^i(x)}^i (f_i(x_{j+1})-f_i(x_j))\leq \\ \leq 2\sum n_i(x)+c'(n) p \M(\partial \Om) \leq c(n) p \M(\partial \Om)
\end{align*}
\end{proof}

Now we assume that $F'$ is defined and satisfies the
inductive property on the  
$k$-skeleton $X(q)_k$. We will define
$F'$ on the $(k+1)$-skeleton and prove that it satisfies the inductive property on $X(q)_{k+1}$. 
By Lemma \ref{reduction to inductive} this will finish
the proof of the theorem.

\subsection{Inductive step}
Let $D$ be a $(k+1)$-cell in $X(q)$.
We will define map $F_D: \partial D \times [0, N']$,
such that $F_D=F'$ on $\partial D \times \{0\}$ and 
$F_D(x, N')$ is a constant map for all $x \in D$.
This immediately gives the desired extension of
$F'$ to $D$ by identifying $D$ and $\partial D \times [0, N']/\partial D \times \{N'\}$. We will show that it satisfies
the (Inductive property) and, hence, the mass and boundary
mass bounds.

Let $D_0 = \{x_1,..., x_{2^{k+1}}\}$ denote the collection
of vertices in $D \cap X(q)_0$.
Assume that the vertices are numbered 
so that $s(x_{j_1}) \leq s(x_{j_2})$ if $j_1 < j_2$.
Let $y_i(D) \in D_0$ be a vertex with
$$\M((\partial C_{s(y_i(D))}(F(x_1)))_i) = 
\min\{\M((\partial C_{s(x)}(F(x_1)))_i): x \in D_0\}$$
(Recall that $\partial C_{s(y_i)}(F(x_1)) = \partial C_{s(y_i)}(F(x))$ for all $x \in D_0$.)
By the inductive property for all $x \in \partial D$
we can write

\begin{align*}
    F'(x) =& F(x_1) \llcorner \Om_{\eps}+
\sum_{i}^{N'} \Delta_i\bigg(\Big(\big(C_{s(x_j)}(F(x_1))\big)_i \Big)_{j=1}^{2^{k+1}},
\Big(\rho_j^i(x)\Big)_{j=1}^{2^{k+1}-1}\bigg) \\
            &+e(x) + \sum_{i}^{N'} \sum_{j=1}^{2^{k+1}-1} \Psi_{\rho_1^i(x)...\rho_{j}^i(x)}^i (f_i(x_{j+1})-f_i(x_j))
\end{align*}
%Recall that $x(D) \in $
Now we will define functions $\rho^i_j(x,t)$, $t \in [0,N']$
and $x \in \partial D$,
as follows:
\begin{enumerate}
    \item For $t < i-1$ we set $\rho^i_j(x,t)= \rho^i_j(x)$;
    \item For $t \in [i-1,i]$
    
    \begin{equation*}
  \rho^i_j(x,t) =
    \begin{cases}
    (t-i+1)+(1-(t-i+1))\rho^i_j(x) & \text{if }s(x_j) < s(y_i(D)) \\
    (1-(t-i+1))\rho^i_j(x) & \text{if } s(x_j) \geq s(y_i(D))
    \end{cases}       
\end{equation*}
    \item For $t> i$ we set 
        \begin{equation*}
  \rho^i_j(x,t) =
    \begin{cases}
    1 & \text{if }s(x_j) < s(y_i(D))\\
    0 & \text{if }s(x_j) \geq s(y_i(D))
    \end{cases}       
\end{equation*}
\end{enumerate}

We define 

\begin{align} \label{F_D}
    F_D(x,t) =& C_{s(x_1)}(F(x_1))+
\sum_{i}^{N'} \Delta_i((C_{s(x_j)}(F(y)))_{j=1}^{2^{k+1}},
(\rho_j^i(x,t))_{j=1}^{2^{k+1}-1}) \nonumber \\
    &+e(x)+ f_i(x_1) + \sum_{i} \sum_{j=1}^{2^{k+1}-1} \Psi^i_{\rho_1^i(x,t)...\rho_{j}^i(x,t)} (x_{j+1}-x_j)
\end{align}

From $F_D$ we obtain the definitions of $\rho_{j}^i$ and $F'$ on $D$ by identifying 
$D$ and $\partial D \times [0, N']/\partial D \times \{N'\}$.
We claim that with this definition the inductive property 
will be satisfied for $F'$ on $D$.  Fix a point $y \in X(q)_0$
in a cell $C^{p'}$ of $X(q)$ that intersects $D$, $k+1 \leq p' \leq p$. 
Let $\{z_l\}$ denote the vertices of $C$ arranged, as usual,
so that $(s(z_l))$ is an increasing sequence. 
We define $\tilde{\rho}_{l}^i(x) = \rho_j^i(x)$ if $z_l = x_j$.
For the values of $l$ that correspond to vertices $z_l$ that do not lie in $D_0$ and every $x \in D \cap C $ 
we set $\tilde{\rho}_{l}^i(x)=1$ if $s(z_l) < s(x_{2^{k+1}})$
and $\tilde{\rho}_{l}^i(x)=0$ if $s(z_l) \geq s(x_{2^{k+1}})$.
%Should we add a comment about this being
%continuous on the $(k+1)$-skeleton (using inductive statement for k-skeleton?
For $x \in D \cap C$ consider
\begin{align*}
    G(x,y) =& F(y) \llcorner \Om_\eps +
\sum_{i} \Delta_i((C_{s(z_l)}(F(y)))_{l=1}^{2^{p'}},
(\tilde{\rho}_l(x))_{l=1}^{2^{p'}-1}) \\
            & + f_i(y_1)+ \sum_{i} \sum_{l=1}^{2^{p'}-1} \Psi_{\tilde{\rho}_1^i(x)...\tilde{\rho}_{l}^i(x)}^i (f_i(z_{l+1})-f_i(z_l))
\end{align*}
It follows from the definition (\ref{F_D}) and the inductive assumption of $e(x)$ that
$e'(x) =F'(x) - G(x,y)$ is a family of cycles of length $\leq c(k+1)\delta$ contained in a $c(k+1)\delta$-admissible collection of sets.

We would like to show that properties (i)-(ii) of functions $\tilde{\rho}^i_l$
hold. The first property follows from the inductive assumption 
and our definitions of $\rho^i_j(x,t)$ and $\tilde{\rho}^i_l(x)$.
To see that (ii) holds observe that the choice of $y_i(D)$
as the vertex with minimal $\M((\partial C_{s(y_i(D))}(F(x_1)))_i)$
implies that for $x' = (x,t)$ as defined in (\ref{F_D})
functions $\tilde{\rho}_l^i(x')$ satisfy
\begin{align*}
   &(1-\tilde{\rho}_1^i(x,t))\M((\partial (C_{s(z_1)}(F(y)))_i)) \\
    &+\sum_l (\tilde{\rho}_1^i(x,t)...\tilde{\rho}_{l}^i(x,t)-\tilde{\rho}_1^i(x,t)...\tilde{\rho}_{l+1}^i(x,t)) \M((\partial (C_{s(z_l)}(F(y)))_i))\\
& + \tilde{\rho}_1^i(x,t)...\tilde{\rho}_{2^{p'}-1}^i(x,t) \M((\partial (C_{s(z_{2^{p'}})}(F(y))))_i) \\
& \leq  (1-\tilde{\rho}_1^i(x,0))\M((\partial (C_{s(z_1)}(F(y)))_i))\\
      &  +\sum_l (\tilde{\rho}_1^i(x,0)...\tilde{\rho}_{l}^i(x,0)-\tilde{\rho}_1^i(x,0)...\tilde{\rho}_{l+1}^i(x,0)) \M((\partial (C_{s(z_l)}(F(y)))_i))\\
& + \tilde{\rho}_1^i(x,0)...\tilde{\rho}_{2^{p'}-1}^i(x,0) \M((\partial (C_{s(z_{2^{p'}})}(F(y))))_i) 
\end{align*}

%By Proposition \ref{approximation no concentration}
Finally, we want to prove that for a given $\delta_1>0$ and some sufficiently large $q'$
map $\partial F': X(q') \rightarrow \mathcal{Z}_0(\Sigma, \mathbb{Z}_2)$ is $\delta_1-$localized on $X(q')$.
Observe that in the construction above, given any two $x,y$ in a cell $C$ of $X(q)$ and $Q_i \subset \Sigma$ we have that $(\partial F'(x) - \partial F'(y)) \llcorner Q_i$ is supported in at most $k = \sup_{v \in C_0} \M(\partial F'(x) \llcorner Q_i)$ balls of radius bounded by $c(p) dist_{\infty}(x,y) $. Hence, choosing $q'(p,\delta_1, \sup_{v \in X(q)_0} \M(\partial F'(x)) \geq q$ sufficiently large we have that map $F'$ is $\delta_1$-localized on $X(q')$.

This finishes the proof of Theorem \ref{coarea}.
\end{proof}

%\section{Parametric isoperimetric inequality}

\section{Parametric isoperimetric inequality}\label{sec:isoperimetric}
%In this section we prove a parametric generalization 
%of the Federer-Fleming isoperimetric inequality for %mod 2 1-cycles
%in .
In this section we prove a parametric
isoperimetric inequality Conjecture \ref{conj: isoperimetric} for 
$0$-cycles in a disc. One can think about this result
as a quantitative version of the Dold-Thom Theorem.

Let $Q$ denote a 2-dimensional disc of radius 1.

\begin{theorem} \label{isoperimetric}
There exists constant $c(n)>0$ with the following property.
Let $\eps>0$, $\delta>0$ and $\Phi: X^p \rightarrow \mathcal{Z}_0(Q, \partial Q; \Z_2)$ be a continuous 
contractible map.
%with no concentration of mass.
There exists a $\delta$-localized map $\tilde{\Phi}:X^p \rightarrow \mathcal{Z}_0(Q; \Z_2)$
and $\Psi: X \rightarrow I_{1}(Q;\Z_2)$, such that

\begin{itemize}
    \item $\mathcal{F}(\Phi(x) \llcorner int(Q), \tilde{\Phi}(x) \llcorner int(Q)) < \eps$
for all $x$;
    \item $\partial \Psi(x) = \tilde{\Phi}(x)$
    for all $x$;
    \item $\M(\Psi(x)) \leq c (\M(\Phi(x))p^{-\frac{1}{2}}+p^{\frac{1}{2}})$;
    \item $\M( \tilde{\Phi}(x) ) \leq 2\M(\Phi(x))+ cp$.
\end{itemize}
\end{theorem}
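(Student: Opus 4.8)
The plan is to reduce the parametric statement to the ``discrete, $\delta$-localized'' setting provided by Section~\ref{sec:approximation}, and then to carry out a skeleton-by-skeleton interpolation in which the fillings are built by a dyadic (Federer--Fleming-in-a-fine-grid) construction on the disc. First I would apply Proposition~\ref{relative to absolute} to replace the contractible family of \emph{relative} $0$-cycles $\Phi$ in $(Q,\partial Q)$ by a contractible family of \emph{absolute} $0$-cycles, up to flat distance $\eps/2$ in $\mathrm{int}(Q)$; this is legitimate precisely because $\Phi$ is contractible, so by the Almgren-type Theorem~\ref{Almgren map} the associated $(p+0)$-cycle $A_\Phi$ in $X\times Q$ bounds. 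Next I would invoke Proposition~\ref{approximation_0} (the optimal $0$-cycle approximation, which needs no concentration-of-mass hypothesis) together with Proposition~\ref{approximation no concentration} to replace $\Phi$ by a $\delta$-localized discrete family $f\colon X(q)_0\to\mathcal{Z}_0(Q;\Z_2)$ that is $\eps'$-fine and whose masses exceed those of $\Phi$ by an arbitrarily small amount. This is the family that will become $\tilde\Phi$ after re-interpolation; the key point is that $\delta$-localization gives, on each cell $C$, a $\delta$-admissible collection of tiny balls $\{U_l^C\}$ inside which all the differences $f(x)-f(y)$ live.

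The heart of the matter is constructing the family of $1$-chains $\Psi$ with $\partial\Psi(x)=\tilde\Phi(x)$ and the mass bound $\M(\Psi(x))\le c(\M(\Phi(x))p^{-1/2}+p^{1/2})$. For a \emph{single} $0$-cycle $z$ of mass $m$ in $Q$, one subdivides $Q$ into $\sim p$ triangles of diameter $\sim p^{-1/2}$, pushes $z$ by a Federer--Fleming deformation onto the $1$-skeleton of this grid and then fills along the edges: the deformation chain has mass $\lesssim m\cdot p^{-1/2}$ (each point travels distance $\lesssim p^{-1/2}$, with multiplicity controlled since we are mod~$2$), and then filling the resulting $0$-cycle on the $1$-skeleton costs at most the total length of the $1$-skeleton, which is $\sim p\cdot p^{-1/2}=p^{1/2}$; this is exactly inequality~(\ref{FFinsmall}) with $n=2,k=0$. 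To make this work \emph{in families} I would fix one grid and, on the $0$-skeleton $X(q)_0$, choose for each vertex $x$ a generic sub-position of the grid so that $f(x)$ misses all vertices and meets all edges transversally (a small perturbation, as in the $\delta_1$-localization step of Theorem~\ref{coarea}). Then I would build $\Psi$ by induction on skeleta of $X(q)$: on an $(m+1)$-cell $D$, having $\Psi$ on $\partial D$, I use that $\tilde\Phi(x)-\tilde\Phi(y)$ for $x,y\in D$ is supported in $\bigcup U_l^D$, radially contract those tiny differences inside the balls $U_l^D$ (contributing only $O(\delta)\cdot(\text{local mass})$), and interpolate the ``grid fillings'' by the same triangle-by-triangle radial scaling device ($\Psi_\rho^i$, $\mathrm{Cone}_i$, $\Phi_\rho^i$) used in Section~\ref{sec:coarea}; the $\Delta_i$-type formulas keep $\partial\Psi(x)$ equal to $\tilde\Phi(x)$ throughout and add at most $O(1)$ extra mass per triangle per homotopy, hence $O(p)$ total, which is consistent with the claimed bound after absorbing constants. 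The mass bound $\M(\tilde\Phi(x))\le 2\M(\Phi(x))+cp$ comes out of the same bookkeeping: pushing onto the $1$-skeleton can at worst double a mod-$2$ $0$-cycle's mass near each edge and the ``rounding to vertices'' contributes at most one point per triangle, i.e.\ $O(p)$.

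Finally I would check the two remaining bullets: $\mathcal{F}(\Phi(x)\llcorner\mathrm{int}(Q),\tilde\Phi(x)\llcorner\mathrm{int}(Q))<\eps$ follows by combining the $\eps/2$ from Proposition~\ref{relative to absolute} with the arbitrarily small flat error from the approximation propositions and from the radial contractions inside the $U_l^C$ (all controllable by shrinking $\delta$ and $\eps'$); and $\delta$-localization of $\tilde\Phi$ is built in by construction. \textbf{The main obstacle} I anticipate is the same one that drives Section~\ref{sec:coarea}: keeping the interpolated fillings' masses under control across a cell when the intersection of the chains with the triangle boundaries $\partial Q_i$ can be large and varies with $x$. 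The fix is to first perturb so these intersections are finite, then add the small ``$f_i(x)$''-type correction chains supported on $\partial Q_i$ that reduce $\tilde\Phi(x)\llcorner\partial Q_i$ to mass $\le 1$ before contracting, so that each triangle-by-triangle homotopy changes the total mass by only $O(1)$; propagating this estimate through the inductive formula (\ref{eq:inductive step}) with the counting $\sum_i n_i(x)\le\dim B(x)\le p$ is what yields the final $O(p)$ error, and verifying that this counting survives the $0$-cycle (rather than $1$-cycle) bookkeeping is the step I would expect to require the most care.
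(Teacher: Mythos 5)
Your proposal starts the same way the paper does (Proposition~\ref{relative to absolute} to pass to absolute cycles, the approximation results of Section~\ref{sec:approximation} to get a $\delta$-localized $\tilde\Phi$), but then diverges at the crucial filling step, and I think the divergence points to a genuine gap. The paper's construction is in two moves. First, Lemma~\ref{interval} arranges that $\Phi'(x)\llcorner I\le 2p$ on a short boundary arc $I$, and then Lemma~\ref{linear} produces a \emph{continuous} family of fillings $\tau(x)$ for free, by coning from a single external point $a\notin Q$ chosen so that the two tangent lines from $a$ to $\partial Q$ meet $\partial Q$ at the endpoints of $I$: the filling is just the cone over $\Phi'(x)$ relative to the radial projection to $\partial Q\setminus I$, so $\tau(x)$ consists of line segments lying on rays through $a$ and depends continuously on $x$ with no interpolation whatsoever. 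Second, one compresses $\tau(x)$ by applying a single, $x$-independent piecewise-linear ``bend-and-cancel'' map $P$ to the grid of $\sim p$ triangles; the balls $B_{2\eta}(p_i)$ are chosen so that each ray from $a$ meets at most one of them, which is exactly what lets the compression produce $\M(P(\tau(x)))\lesssim \sqrt p + \M(\Phi'(x))/\sqrt p$. Because $P$ is fixed and $\tau(x)$ is already continuous, $\Psi(x)=P(\tau(x))+l(\Phi'(x))$ is automatically a continuous family and no skeleton-by-skeleton interpolation is needed for $\Psi$.

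Your plan instead tries to build $\Psi$ directly by a Federer--Fleming push of $\tilde\Phi(x)$ into the $1$-skeleton of the grid and then to restore continuity by recycling the inductive machinery of Section~\ref{sec:coarea} ($\Psi_\rho^i$, $\mathrm{Cone}_i$, $\Delta_i$, the $\rho_j^i$-functions, the $f_i$-corrections). That is where the gap is. The FF push of $\tilde\Phi(x)$ is not a fixed map applied to an already-continuous family: it depends discontinuously on the position of the cycle relative to the grid vertices, and it destroys the $\delta$-localization structure (two cycles differing inside a tiny ball $U_l^C$ that straddles a grid edge can be pushed to very different places in the $1$-skeleton). The $\Delta_i/\rho_j^i$ bookkeeping from Section~\ref{sec:coarea} that you invoke to fix this is tailored to a different invariant --- the cutting heights $s(x)$ and the explicit inductive property~(\ref{eq:inductive step}) --- and the counting $\sum_i n_i(x)\le\dim B(x)$ that bounds the extra mass there does not come for free in your setting; you acknowledge this as ``the step requiring the most care'' but give no argument that it goes through, and I do not see one. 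You also miss the contractibility input at the right place: in the paper it is used once, through Proposition~\ref{relative to absolute}, and after that the filling is produced unconditionally by Lemma~\ref{linear}; in your plan you still need a continuous family of fillings of $\tilde\Phi(x)$ with linear-in-$m$ mass before you can compress, and the FF construction alone does not provide one. In short, you are substituting heavy interpolation machinery for the paper's simple linearize-then-compress scheme, and the interpolation step, which carries all the difficulty, is not established.
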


We will need the following lemma:

\begin{lemma} \label{interval}
Let $F: X^p(q)_0 \rightarrow  \mathcal{Z}_0(Q; \Z_2)$
be 
%a $\delta$-localized 
an $\eps$-fine family and
let $I \subset \partial Q$ be a small interval of 
length $L$.
There exists a continuous family $F':X(q) \rightarrow  \mathcal{Z}_0(Q; \Z_2)$
with the following properties:

\begin{enumerate}
    \item $F'$ is $\eps'$-fine for $\eps' = c(p)(\eps + \sup_{x \in X(q)_0}\M(F(x))L)$;
    \item $F'(x) \llcorner int(Q) = F(x) \llcorner int(D)$ for all $x \in X(q)_0$;
    %(3) $\M(F'(x) \llcorner int(Q))
%\leq \M(F(x) \llcorner int(Q))$;
\item $\M(F'(x) \llcorner I) \leq 2 p$.
\end{enumerate}

\end{lemma}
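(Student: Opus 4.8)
The plan is to alter $F$ only inside a neighbourhood of $I$ of size $\sim L$ and to construct the continuous extension by hand — rather than through Theorem~\ref{approximation}, which would only give fineness $\eps^{1/(n+1)}$ — so that at every parameter at most $\sim p$ points of the family sit on $I$.

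First I would record, exactly as in the proof of Proposition~\ref{approximation_0}, that for $0$-cycles an $\eps$-fine discrete family is automatically $\delta$-localized with $\delta\le c(p)\eps$: a mass-$<\eps$ filling of $F(a)-F(b)$ for adjacent vertices $a,b$ has support of diameter $\lesssim\eps$ after a negligible perturbation, and these supports are absorbed into a $c(p)\eps$-admissible collection by Lemma~\ref{two admissible}; so I may assume $F$ is $\delta$-localized with $\delta\le c(p)\eps$. Using that $Q$ is a smooth disc I would fix a bilipschitz half-disc neighbourhood $W$ of $I$ in $Q$ with $diam(W)\lesssim L$, together with a projection $\pi\colon W\to I$, subdivide $I$ into $m=\lceil p\rceil$ consecutive subintervals $I_1,\dots,I_m$ of length $L/m$ with midpoints $c_1,\dots,c_m$, and set $W_j=\pi^{-1}(I_j)$. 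On the $0$-skeleton I would then put $F'(x)$ equal to $F(x)$ on $Q\setminus W$, and on each $W_j$ replace $F(x)\llcorner W_j$ by the single point $\{c_j\}$ when $\M(F(x)\llcorner W_j)$ is odd and by the empty chain when it is even; this gives $\M(F'(x)\llcorner I)\le m\le p$ on $X(q)_0$, and $F'(x)$ agrees with $F(x)$ off $W$, which is what (2) requires once the modification is kept inside $W$.

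The substance is the extension to $X(q)$, and here I would run the inductive construction of the proof of Theorem~\ref{coarea} in the same spirit: refine to $X(q')$ and interpolate cell by cell over a parameter $t\in[0,m]$ whose $j$-th unit subinterval performs, inside $W_j$, the radial contraction towards $c_j$ of the difference between consecutive vertex values, while over the rest of $Q$ one interpolates by radial contraction inside the tiny balls of the $\delta$-admissible collection, as in Proposition~\ref{discrete to continuous}. The contractions away from $W$ cost at most $c(p)\,\delta\,\M(F)\le c(p)\,\eps\,\M(F)$ in flat norm, and the contractions inside $W$ cost at most $c(p)\,\M(F)\,L$, which together give the fineness $\eps'=c(p)(\eps+\sup_{w\in X(q)_0}\M(F(w))\,L)$ of (1). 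As in the inductive step of Theorem~\ref{coarea} one maintains the invariant that, for $x$ in a subcell $B(x)$ of the current cell, only $\dim B(x)$ of the regions $W_j$ are ``mid-contraction'' (after a preliminary $\delta$-localization inside each $W_j$, such a region then carries at most one point of $F'(x)$), while every other $W_j$ holds either $\{c_j\}$ or nothing; hence $\M(F'(x)\llcorner I)\le m+\dim B(x)\le 2p$, which is (3).

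The main obstacle is the bookkeeping that already makes Section~\ref{sec:coarea} delicate, now with the added constraint that the point count on $I$ stay $\le 2p$ through every stage of every cell. Concretely one must (i) run the interpolation so that the fineness stays \emph{linear} in $\eps$, hence control the contractions directly rather than invoking Theorem~\ref{approximation}; (ii) before contracting $W_j$, make the family $\delta$-localized \emph{inside} $W_j$ so the mid-contraction states contribute at most one point each; and (iii) fix the parities $\M(F(x)\llcorner W_j) \bmod 2$ coherently as the $\delta$-small differences $F(x)-F(y)$ cross $\partial W_j$, which is why $W$ and the subdivision of $I$ must be chosen after $\delta$ and placed in general position with respect to the perturbed family. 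None of this is conceptually new beyond Section~\ref{sec:coarea}, but assembling it with the stated constants is where the work lies.
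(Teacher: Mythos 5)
Your plan is in the same spirit as the paper's proof (replace $F$ near $I$ by a few canonical points and extend cell by cell), but you introduce a $\sim p$-fold subdivision of $I$ that the paper shows is unnecessary and that actually worsens the bookkeeping. The paper works with a \emph{single} ball $B=B_L(e)$ around the midpoint $e$ of $I$, sets $F'(x)=F(x)-F(x)\llcorner B$ or $F(x)-F(x)\llcorner B+\{e\}$ according to the parity of $\M(F(x)\llcorner I)$, so that $\M(F'(x)\llcorner I)\le 1$ at every vertex; on edges it takes a mass-$\le\eps$ filling $T$ of $F(x)-F(y)$, deletes the arcs with both endpoints in $I$ and reroutes the others so that they end at $e$ (costing $\le\max\M(F)\cdot L$), and contracts the resulting $T'$ arc by arc; on higher cells it uses $c(k)\eps$-localization to find $r\in(L,L+c(k)\eps)$ with $F'(x)\llcorner\partial B_r(e)=\emptyset$ for all $x\in\partial C$, shrinks the annulus $A(L,r,e)$ into $\partial B_L(e)$ (adding $\le 2$ boundary points to $\partial Q$), and finally cones towards $e$; the usual radial contraction of Proposition~\ref{approximation_0} handles the part of $F'$ outside $B_r(e)$. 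This gains at most $2$ per skeleton dimension, giving $\le 2p$ without ever subdividing $I$. In contrast, your construction already spends $m\sim p$ points at the $0$-skeleton and then leans on the full Theorem~\ref{coarea} machinery to control the remaining budget. Two specific issues with your accounting: first, a ``mid-contraction'' region $W_j$ should be expected to carry up to two or three points on $I$, not one --- the difference of two $\le 1$-point states passes through $\le 2$-point states under radial contraction, and adding this to the ambient $\le 1$-point state gives $\le 3$, so the invariant is closer to $m+3\dim B(x)$ than $m+\dim B(x)$; second, the flat cost of the contractions away from $W$ is $c(p)\,\delta\,\M(F)\sim c(p)\,\eps\,\M(F)$, which is not $\le c(p)\eps$ unless $\M(F)\le 1$, so the fineness you compute is really $c(p)\M(F)(\eps+L)$ rather than the stated $c(p)(\eps+\M(F)L)$ (the paper avoids this on edges by contracting the arcs of $T'$ directly, which costs only $\M(T')$). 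Neither defect is fatal for the application --- any $O(p)$ bound on $\M(F'\llcorner I)$ and any fineness tending to $0$ with $\eps,L$ would serve Theorem~\ref{isoperimetric} --- but they show that the subdivision buys nothing while multiplying the bookkeeping you correctly flag as the hard part.
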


\begin{proof}
Let $z$ denote the cycle of mass 1
supported on the midpoint $e$ of $I$ and
let $B=B_L(e)$.

For every $x \in X(q)_0$ define $F'(x) = F(x) - F(x) \llcorner B + z$
if $\M(F(x) \llcorner I)$ is odd and
$F'(x) = F(x) - F(x) \llcorner B $ if $\M(F(x) \llcorner I)$ is even.

Inductively we extend $F'$ to the $k$-skeleton of 
$X(q)$.

Fix edge $E \subset X(q)_1$ with $\partial E = x-y$
and let $T$ be a $1$-chain, $\M(T) \leq \eps$, with
$\partial T = F(x) - F(y)$.
We may assume that $T$ is a finite collection of disjoint
linear arcs. We remove all arcs
of $T$ whose endpoints are contained in $I$ and replace each arc of $T$ connecting a point $a \in Q$
to $b \in I$ with an arc connecting $a$ to the midpoint $e$ of $I$. 
We call the resulting 1-chain $T'$. 
Note that by triangle inequality $T'$ satisfies

$$\M(T') \leq \M(T) + \max \{ \M(F(x)), \M(F(y)) \} L $$

We have that $\partial T' = F'(x) - F'(y)$.
We contract each arc of $T'$ one by one to obtain a family 
of 1-chains $T_t'$ and define $F'(t) = F'(x) + \partial T_t'$
for $t \in E$.
Observe that $\M(F'(t)) \leq \max \{ \M(F'(x)), \M(F'(y) \} +2$.

Now we assume that we have extended to the $k$-skeleton
with $\M(F'(x) \llcorner I) \leq 2k$
and so that the family 
is $c(k) \eps$-localized.
Let $C$ be a $(k+1)$-face.    
By the assumption that the family is localized there exists a ball $B_r(e)$, 
$r \in (L, L+ c(k) \eps)$, such that
$F'(x) \llcorner \partial B_r(e) = \emptyset$
for all $x \in \partial C$.
Let $R_t: B_r(e) \rightarrow B_{r-t}(e)$ denote a 1-Lipschitz map
defined as (in polar coordinates)
$R_t(\rho, \theta) = (\frac{(r-t-L) \rho + tL}{r-L}, \theta)$
for $\rho>L$ and $R_t(\rho, \theta)=(\rho, \theta)$. Note that $R_t$
 shrinks annulus $A(L,r,e)$ onto $A(L, r-t, e)$.
For $t \in [0, r-L]$ define
$F'(x, t) = F'(x) - F'(x) \llcorner B_r(e) + R_t(F'(x) \llcorner B_r(e)) $.
For $t = r-L$ we are guaranteed that the family $F'(x, t) \llcorner B_r(e)$
satisfies $\M(F'(x, t) \llcorner B_r(e) \llcorner \partial Q^2) \leq 2k +2$.
Then we can apply radial contraction to the centerpoint $e$.

This contracts the family $F'(x) \llcorner B_r(e)$, $x \in \partial C$,
to a single cycle. We contract $F'(x) $ outside of  $B_r(e)$ in the usual way
(see Proposition \ref{approximation_0}).
\end{proof}

\begin{lemma} \label{linear}
Suppose $F: X^p \rightarrow \mathcal{Z}_0(Q^2; \Z_2) $, 
and there exists $I \subset \partial Q^2$, such that $\M(F(t) \llcorner I) \leq K$.
There exists a continuous family of 1-chains
$\{\tau_t \}$, such that 

1. $\partial \tau_t  - F(t) \subset \partial Q$;

2. $\M(\tau_t) \leq 2( \M(F(t) \llcorner int(Q)) +K)$

3. $\M(\partial \tau_t) \leq 2 \M(F(t) \llcorner int(Q) ) + 2K$
\end{lemma}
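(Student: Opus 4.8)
Lemma \ref{linear} — proof plan.

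The plan is to produce the family $\{\tau_t\}$ explicitly as a "comb" over the interval $I$: fix a point $e$ in the interior of $I$ (say its midpoint), and for each $t$ connect every point of $\mathrm{supp}(F(t))$ to $e$ by a geodesic-type arc inside $Q$. More precisely, first I would pass to a $\delta$-localized, $\eps$-fine discrete model of $F$ on some $X(q)_0$ using Proposition \ref{approximation_0} (this is harmless since conclusions 1–3 are closed conditions under flat limits and the statement already allows the $\pm \subset \partial Q$ slack). On the $0$-skeleton, write $F(t)\llcorner int(Q) = \sum_j \{a_j(t)\}$ as a $\Z_2$-sum of at most $\M(F(t)\llcorner int(Q))$ points, and set $\tau_t = \sum_j [a_j(t), e]$, where $[a,b]$ denotes the arc in $Q$ from $a$ to $b$; also throw in, for the $\le K$ points of $F(t)$ lying on $I$, the sub-arcs of $I$ joining them to $e$. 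Then $\partial \tau_t = F(t)\llcorner int(Q) + (\text{points on }I) + (\text{copies of }e\text{ mod }2)$, so $\partial\tau_t - F(t)$ is supported on $I\subset\partial Q$, giving (1); since each arc has length $\le \mathrm{diam}(Q)\le 2$ and there are at most $\M(F(t)\llcorner int(Q))+K$ of them, we get $\M(\tau_t)\le 2(\M(F(t)\llcorner int(Q))+K)$, which is (2); and $\partial\tau_t$ consists of the original $\M(F(t)\llcorner int(Q))$ interior points plus at most $K$ points on $I$ plus possibly one copy of $e$, so $\M(\partial\tau_t)\le 2\M(F(t)\llcorner int(Q))+2K$ after absorbing the single point of $e$ into the $2K$ term (taking constants with room to spare), which is (3).

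The one genuine issue is continuity in $t$: the combinatorial description "connect each point to $e$" is not a priori continuous, because the number of points of $F(t)$ jumps, and points can collide or escape to $\partial Q$. This is exactly the situation the $\delta$-localization machinery of Section \ref{sec:approximation} is built for. The approach is to define $\tau_t$ first on $X(q)_0$ as above, then extend cell by cell: on a cell $C$ with $\delta$-admissible collection $\{U_\ell^C\}$ such that $F(x)-F(y)\subset \bigcup_\ell U_\ell^C$ for $x,y\in C_0$, interpolate by radially contracting, inside each $U_\ell^C$, the difference of the corresponding arc-chains — i.e. apply the same $\sigma(x,t) = F(x) + \sum_\ell R_t^\ell\big((F(v(C))-F(x))\llcorner U_\ell^C\big)$ construction from Proposition \ref{discrete to continuous}, but lifted to the level of the combs $\tau$. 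Because each $U_\ell^C$ is a small ball (or boundary collar), the arcs from points in $U_\ell^C$ to $e$ all run nearly parallel, so this radial homotopy changes $\M(\tau_t)$ and $\M(\partial\tau_t)$ by at most $O(\mathrm{diam}(U_\ell^C)\cdot(\#\text{points}))$, which is absorbed into the slack in the constants once $\delta$ is taken small enough; then Theorem \ref{approximation} interpolates to a continuous family on all of $X$. The arcs meeting $I$ are handled in parallel using that the hypothesis $\M(F(t)\llcorner I)\le K$ is assumed to hold for all $t$ (not just on the $0$-skeleton), so the "part on $I$" of $\tau_t$ stays under control throughout.

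The main obstacle, then, is not the isoperimetric estimate itself — coning to a single point in a disc is trivial — but verifying that the cone-to-$e$ construction can be carried out as a genuinely continuous family with the stated mass bounds; essentially all the work is an application of Proposition \ref{discrete to continuous} and Theorem \ref{approximation} to the auxiliary family $t\mapsto\tau_t$ of $1$-chains rather than to $F$ itself. Once $\delta$-localization is invoked the three numerical bounds follow immediately from the triangle inequality and $\mathrm{diam}(Q)\le 2$, with constant $2$ comfortably absorbing the localization errors.
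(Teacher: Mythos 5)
Your plan takes a genuinely different route from the paper's and has a gap at exactly the continuity step you flag. The paper's proof is a one-formula construction: choose an external point $a$ whose two tangent lines to $\partial Q$ touch $\partial Q$ at the endpoints of $I$; for $q \in Q$ let $H(q)$ be the second intersection of the line through $a$ and $q$ with $\partial Q$, so that $H(q) \in \partial Q \setminus I$ for all $q$ and $H(q) = q$ whenever $q \in \partial Q \setminus I$; then set $\tau_t = Cone_a(H(F(t))) - Cone_a(F(t))$, i.e.\ the sum of the chords $[q, H(q)]$ over the points $q$ of $F(t)$. Because the chord $[q, H(q)]$ shrinks to a point as $q \to \partial Q \setminus I$, and because $H$ and $Cone_a$ are continuous operations on flat chains, $t \mapsto \tau_t$ is automatically continuous; bounds (1)--(3) are immediate, and no discretization or localization machinery is needed.

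Your construction --- coning $F(t) \llcorner (int(Q) \cup I)$ to the midpoint $e$ of $I$ --- does not have the shrinking property. When a point $q$ of $F(t)$ exits through $\partial Q \setminus I$, the arc $[q,e]$ that your $\tau_t$ carries has length bounded below by $dist(\partial Q \setminus I, e) > 0$ right up to the exit and then must vanish; that is a genuine flat-topology jump. (By contrast the issues you name --- collisions, jumps in the number of points --- are not discontinuities at all: mod-$2$ cancellation and flat convergence handle them for free, and indeed $Cone_e$ applied to all of $F(t)$ is continuous on $\mathcal{Z}_0(Q;\Z_2)$; the obstruction is precisely the \emph{selective} coning that omits $\partial Q \setminus I$.) Your proposed repair via $\delta$-localization does not directly apply either: Proposition \ref{discrete to continuous} interpolates by radially contracting $F(x)-F(y)$ \emph{inside} the small admissible sets $U_\ell^C$, whereas your comb differences $\tau_x - \tau_y$ are carried on long arcs running from $U_\ell^C$ all the way to $e$ and are not contained in $\bigcup_\ell U_\ell^C$. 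Near $\partial Q \setminus I$, ``lifting to the level of the combs'' requires continuously creating or destroying a full-length arc, and you have not shown that this can be done within the stated mass and boundary-mass budget. The paper's choice of cone vertex (an external tangent point) and target set ($\partial Q \setminus I$) is engineered precisely so that this problem never arises.
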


\begin{proof}
%Since square is bilipschitz to a a disc, without any loss of generality
%we may replace $Q$ with a disc $D$.
Pick a point $a \in \R^2$ outside of disc $Q$, such that two 
tangent lines from $a$ to $\partial Q$ touch $\partial Q$
at the endpoints of interval $I$.

Given a point $q \in Q$ there is a unique line
$L_q$ passing through $a$ and $q$.
Let $H(q) = L_q \cap (\partial Q \setminus I)$.
We define
$\tau_t = Cone_a(H(F(t))) - Cone_a(F(t))$.
\end{proof}

Now we can prove Theorem \ref{isoperimetric}.
By Proposition \ref{relative to absolute} we can replace $\Phi$
with an $\eps$-close family of absolute cycles $\Phi': X \rightarrow \mathcal{Z}_0(Q;\Z_2)$
and by Lemmas \ref{linear} and \ref{interval} we may assume that there
exists a family of chains $\tau: X \rightarrow I_1(Q;\Z_2)$, such that
$\partial \tau(x) = \Phi'(x)$. Note that the family will have the desired
bound for the 
mass in the boundary.

Observe that for each $x$ the chain $\tau$
consists of finitely many interval segments each
lying on a ray from a fixed point $a$ (point $a$ lies outside of $Q$).
 We subdivide 
$Q$ into $p$ regions $\{Q_i \}$ of
boundary length and diameter $\sim \frac{1}{\sqrt{p}}$.

We perform a ``bend-and-cancel'' construction (\cite{Gu1}, \cite{Gu2}) to 
reduce the length of chains $\tau_t$
by pushing them into the 1-skeleton of the subdivision
$\cup \partial Q_i$.

For each $Q_i \subset Q$ let
$B_{2\eta}(p_i) \subset Q_i$ be a ball 
chosen so that every line passing 
through the point $a$
(from the proof of Lemma \ref{linear})
intersects at most
one ball $B_{2\eta}(p_i)$.
Let $proj_{p_i}:  Q_i \setminus p_i \rightarrow \partial Q_i$ denote the projection map from 
the point $p_i$ and define a piecewise linear map
$P_i: Q_i \rightarrow Q_i$, 
such that $P_i = proj_{p_i}$ on $Q_i \setminus B_{2\eta}(p_i)$
(in particular, $P_i(Q_i \setminus B_{2\eta}(p_i)) = \partial Q_i$)
and $P_i(y) = y$ for all $y \in B_\eta(p_i)$. Note that $P_i$ can be chosen
so that for any line $l$ passing through $Q_i$ we have
$\M(P_i(l \cap Q_i)) \leq 2 diam (Q_i) \leq
\frac{2}{\sqrt{p}}$.

Since $P_i$ is the identity map on $\partial Q_i$
we can define a map $P: Q \rightarrow Q$ by
$P(x) = P_i(x)$ for $x \in Q_i$.

Given a $0$-cycle $z \subset Q$ define a continuous map 
$l_i: \mathcal{Z}_0(Q; \Z_2) \rightarrow I_1(Q_i; \Z_2) $ by
setting
$l_i(z)$ to be the union of linear arcs
connecting each point of $z \llcorner Q_i$ to the corresponding
point of $P_i(z \llcorner Q_i)$. Let $l(z)= \sum_i l_i(z) $.

%Let $c_t$ be the family of $1$-chains from
%Lemma \ref{linear} and define
%$c'_t = P(c_t) + l(z_t')$.
Define $\Psi(x) = P(\tau(x)) + l(\Phi'(x))$

We observe that $\partial \Psi(x) \llcorner int(Q) = \Phi'(x)$
 and 
 \begin{gather*}
     \M(\Psi(x)) \leq \M(P(\tau(x)) + \M(l(\Phi'(x))) \\
     \leq \M(\cup \partial Q_i) +\frac{2 \M(\Phi'(x))}{\sqrt{p}} + \frac{\M(\Phi'(x))}{\sqrt{p}}\\
     \leq c(\sqrt{p}+ \frac{\M(\Phi'(x))}{\sqrt{p}})
 \end{gather*}
 This finishes the proof of Theorem \ref{isoperimetric}.
 
 We now prove Conjecture \ref{conj: isoperimetric} for $k=0$ and $n=2$.
 
 \begin{theorem} \label{thm: isoperimetric0}
 Let $\Omega$ be a  2-dimensional connected simply connected manifold, $\partial \Om$
 piecewise smooth boundary with $\theta$-corners, $\theta \in (0, \pi)$, $L>0$, $p \in \mathbb{N}$.
There exist constants $c(\Omega)>0$ and $\delta(L,p,\Omega)>0$ with the following property.
Let $F: X^p \rightarrow \mathcal{Z}_{0}(\Om, \partial \Om; \Z_2)$ be a continuous contractible $\delta$-localized
%$\delta$-localized 
$p$-dimensional family with $\sup_{x \in X} \M(F(x)) \leq L$.
%with no concentration of mass, 
%where $\delta>0$ is chosen to be sufficiently small.
Then there exists map
 $H: X \rightarrow I_{1}(\Om;\Z_2)$, such that

\begin{itemize}
    \item $\partial H(x) - F(x) \subset \partial \Om$
    for all $x$;
    \item $\M(H(x)) \leq c(\Om) \big(p^{\frac{1}{2}}+
   \M(F(x))p^{-\frac{1}{2}} \big)$.
   %+ c_1(\Om,p) \delta \sup_{x \in X} \M(F(x))$
 %  \item $\M (\partial H(x) -F(x)) \leq c(\Om)(p^{\frac{n-k-1}{n-1}}+ \M(F(x)))$.
\end{itemize}
\end{theorem}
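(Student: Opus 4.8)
The plan is to transport the problem to the round disc $Q$, where it is settled by Theorem~\ref{isoperimetric}, by a bilipschitz change of coordinates. The first step is to fix a bilipschitz homeomorphism of pairs $\varphi\colon(\Omega,\partial\Omega)\to(Q,\partial Q)$ with constant $K=K(\Omega)$. This is where the hypotheses on $\Omega$ (bounded, simply connected, piecewise smooth boundary with $\theta$-corners and hence no cusps) enter: one may first straighten each corner by a local bilipschitz map — near a corner of interior angle $\beta\ge\theta$ the map that is the identity in the radial variable and multiplies the angular variable by $\pi/\beta$ is $(\pi/\theta)$-bilipschitz and makes $\partial\Omega$ smooth there — and then postcompose with the Riemann map of the resulting $C^{1}$-domain, which extends to a bilipschitz homeomorphism of the closure. (Alternatively one may invoke the classical fact that a bounded simply connected planar Lipschitz domain is bilipschitz to $Q$.)

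Next I would push the family across. A bilipschitz map acts on flat chains and is continuous in flat distance, so $\Phi:=\varphi_\#F\colon X^p\to\mathcal{Z}_0(Q,\partial Q;\Z_2)$ is again continuous, and it is contractible because $\varphi$ carries a null-homotopy of $F$ to one of $\Phi$. For $0$-cycles with $\Z_2$ coefficients the mass is the number of points of the support lying in the interior, which is a topological invariant, so $\M(\Phi(x))=\M(F(x))\le L$ for all $x$. Now apply Theorem~\ref{isoperimetric} to $\Phi$ (with a small $\eps>0$ to be fixed below, and any $\delta$): it produces $\tilde\Phi\colon X^p\to\mathcal{Z}_0(Q;\Z_2)$ and $\Psi\colon X\to I_1(Q;\Z_2)$ with $\partial\Psi(x)=\tilde\Phi(x)$, with $\mathcal{F}\big(\Phi(x)\llcorner int(Q),\tilde\Phi(x)\llcorner int(Q)\big)<\eps$, and with $\M(\Psi(x))\le c\,\big(\M(\Phi(x))p^{-1/2}+p^{1/2}\big)$ for a universal constant $c$. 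Set $H_0:=\varphi^{-1}_\#\Psi\colon X\to I_1(\Omega;\Z_2)$. Since $\varphi^{-1}$ is $K$-bilipschitz, $\M(H_0(x))\le K\M(\Psi(x))\le Kc\,\big(\M(F(x))p^{-1/2}+p^{1/2}\big)$, which is the asserted bound with $c(\Omega)=Kc$; moreover $\partial H_0(x)=\varphi^{-1}_\#\tilde\Phi(x)$ lies at flat distance at most $K\eps$ from $F(x)$ in $int(\Omega)$.

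It remains to upgrade ``$\partial H_0(x)$ flat-close to $F(x)$'' to ``$\partial H(x)-F(x)\subset\partial\Omega$''. Since the $\Z_2$-mass of a $0$-chain is integer valued, a flat distance $<K\eps<1$ between two relative $0$-cycles forces them to differ, in $int(\Omega)$, by the boundary of a $1$-chain of mass $<K\eps$; thus $F(x)$ and $\partial H_0(x)$ differ mod $\partial\Omega$ by $\partial B(x)$ for a tiny $1$-chain $B(x)$, supported, because $F$ is $\delta$-localized with $\delta=\delta(L,p,\Omega)$ small, in a controlled admissible collection of sets. Using the interpolation and approximation results of Section~\ref{sec:approximation} one promotes $x\mapsto B(x)$ to a continuous family of $1$-chains of total mass $O(L\eps)$ and sets $H:=H_0+B$; then $\partial H(x)\equiv F(x)\pmod{\partial\Omega}$, and choosing $\eps$ small enough the extra mass $O(L\eps)$ is absorbed into the constant $c(\Omega)$.

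The essentially new content is the first step — producing the bilipschitz model with a constant depending only on $\Omega$, in particular the treatment of the $\theta$-corners; the remainder is transport of structure together with the bookkeeping of the last paragraph, which is routine given Section~\ref{sec:approximation} but must be carried out with some care, since Theorem~\ref{isoperimetric} only furnishes $\partial\Psi$ flat-close to, rather than equal to, the transported family.
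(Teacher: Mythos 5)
Your proposal follows essentially the same route as the paper's proof: transport the family to the round disc by a bilipschitz homeomorphism, invoke Theorem~\ref{isoperimetric}, pull back, and then repair the small discrepancy between $\partial H_0$ and $F$. Two remarks on emphasis, though. First, the paper simply asserts the existence of the bilipschitz chart $\Omega\to Q$, whereas you supply a construction; that detail is welcome but not where the paper invests effort. Second, and more substantively, the step you dismiss as routine bookkeeping --- promoting the pointwise small fillings $x\mapsto B(x)$ to a \emph{continuous} family --- is where the paper's proof does almost all of its work: it builds $\tilde H$ inductively over the skeleton of $X$, at each cell $D$ using Lemma~\ref{two admissible} to combine the admissible collections coming from $\partial D$, from the $\delta$-localized structure of $F$, and from the $\delta$-localized output of Theorem~\ref{isoperimetric}, and then interpolating by conical fillings and radial contractions inside those sets, carrying a mass estimate $\M(\tilde H(x))\leq c_0(p)\,L\,\delta$ through the induction. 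Your appeal to ``the interpolation and approximation results of Section~\ref{sec:approximation}'' does not quite work as stated, since those results concern families of cycles rather than families of fillings; also, the localization of $B(x)$ in a common admissible collection across a cell is not a consequence of $F$ alone being $\delta$-localized, as you suggest, but requires the $\delta$-localization of $\tilde\Phi$ as well and Lemma~\ref{two admissible} to merge the relevant admissible collections. So the outline is right, but the paragraph you call routine is the genuine content of the argument and needs to be spelled out along the lines of the paper's inductive construction.
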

 \begin{proof}
  Let $F: X \rightarrow \mathcal{Z}_0(\Om; \Z_2)$. Composing with a bilipschitz homeomorphsim
 $\Pi$ between $\Om$ and disc $Q$ we obtain a family $F'$ of $0$-cycles in $Q$.
 We apply Theorem \ref{isoperimetric} with $\eps_i\rightarrow 0$
 to obtain a sequence of  $\delta$-localized maps $\tilde{F}_i$
 converging uniformly to $F'$ and a corresponding family of fillings
  $\Psi_i$ of $\tilde{F}_i$.
  
  Pick $i$ large enough so that $\mathcal{F}(\tilde{F}_i(x),F'(x)) < \frac{\delta}{2} $.
  We will define a family $\tilde{H}(x)$ with $\partial \tilde{H}(x) = F'(x) - \tilde{F}_i(x)$
  and $\M (\tilde{H}(x)) \leq c_0(p) L \delta $.
  We define $\tilde{H}(x)$ inductively over the skeleton of $X$. On each vertex $x \in X_0$ 
  define $\tilde{H}(x)$ to be the area minimizing filling of $F'(x) - \tilde{F}_i(x)$. 
  By the flat distance bound there exists a $\delta$-admissible collection of open sets $\{ U_i^x\}$,
  such that $\tilde{H}(x)$ is supported in $\cup U_i$.
  
  Now we prove the inductive step. Suppose we defined $\tilde{H}$ on the $k$-skeleton of $X$ and
  for each $k$-cell $C$ the family of chains $\{\tilde{H}(x)\}_{x \in C}$ is supported in a
  $c'(k)\delta$-admissible collection of sets $\{U^C_i\}$ and $\M(H(x)) \leq c_0(k) \delta \sup_{y \in X} \M(\tilde{H}(y))$ for some constants $c_0$ and $c'$ that depend only on $k$. Let $D$ be a $(k+1)$-cell in $X$. By Lemma \ref{two admissible} there exists a constant
  $c'(k+1)>0$ and a $c'(k+1)\delta$-admissible collection of sets $\{U^D_i\}$,
  such that family of cycles $\{ F'(x) - \tilde{F}_i(x)\}_{x \in D}$ and 
  family of chains $\{\tilde{H}(x)\}_{x \in \partial D}$ are supported in $\cup U^D_i$.
  Observe that for each $i$ we can define a continuous family of conical fillings
  $\tau_i(x)$, $x \in D$, of $(F'(x) - \tilde{F}_i(x)) \llcorner U_i^D$ in $U_i^D$. (``Conical fillings'' means a collection of linear segments from the support of $(F'(x) - \tilde{F}_i(x)) \llcorner U_i^D$ to the center point of $U_i^D$.)
  It will be convenient to fix polar coordinates $(y,t)$, $y \in \partial D$, $t \in [0,1]$, on $D$, with $(y,1) = y$ and $(y,0)$ the center point of cell $D$.
  Let $h_i(y,t)$ denote the radial contraction towards the center point of $U_i^D$
  of relative cycle $\tau_i(y) - \tilde{H}(y)\llcorner U^D_i$, so that
  $h_i(y,1) = \tau_i(y) - \tilde{H}(y)\llcorner U^D_i$ and $h_i(y,0) = 0$. 
  Define $\tilde{H}(y,t) = \sum_i \tau_i(y) - h_i(y,t)$. It is straightforward to check, using the inductive assumption, that it follows from this definition that $\tilde{H}(x)$ is continuous on $D$, $\partial \tilde{H}(x) =  F'(x) - \tilde{F}_i(x)$ and
  $$\M(\tilde{H}(y,t)) \leq 2\M(\tilde{H}(y))+ c'(k+1) \M(F'(x)) \delta \leq c_0(k+1)L \delta $$
  for sufficiently large $c_0(k+1)$. This finishes the construction of $\tilde{H}$.

 Composing $H' = \Psi_i + \tilde{H}$ with $\Pi^{-1}$ we obtain the desired family $H$ of
 fillings in $\Om$. The upper bound for $\M(H(x))$ follows 
 from the upper bound for $\M(Psi_i(x)$ and choosing $\delta = \delta(p,L, \Om)$ sufficiently small.
 \end{proof}

Observe, that from Theorems \ref{thm: isoperimetric0} and \ref{coarea}
one can deduce the proof of Conjecture \ref{conj: isoperimetric} for $k=1$ and $n=3$.
We omit the details as this result is not used in the proof of the Weyl law.

\section{Proof of the Weyl law} \label{sec:weyl}
\begin{theorem} \label{weyl general}
Assume that Conjecture \ref{conj: coarea} holds for all families of $k$-cycles in domains in 
$\R^n$ and Conjecture \ref{conj: isoperimetric} holds for all families of $(k-1)$-cycles
in domains in $\R^{n-1}$. Then the Weyl law for $k$-cycles in $n$-manfolds holds:

For every compact Riemannian manifold $M$ (possibly with boundary)
$$\lim_{p \rightarrow \infty} \frac{\omega_p^k(M)}{p^{\frac{n-k}{n}}}= a(n,k) \Vol(M)^{\frac{k}{n}}$$
\end{theorem}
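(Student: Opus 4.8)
The plan is to establish the two matching asymptotic bounds separately, to extract the constant $a(n,k)$ from the model case of the unit cube by a Fekete-type argument, and then to match the general constant by a volume-balanced decomposition. Throughout we use three reductions. First, scale invariance: under $g\mapsto\lambda^2 g$ one has $\omega_p^k\mapsto\lambda^k\omega_p^k$ and $\Vol\mapsto\lambda^n\Vol$, so $\omega_p^k(M)\,\Vol(M)^{-k/n}$ is unchanged; in particular $\omega_p^k$ of a Euclidean cube of side $L$ equals $L^k\,\omega_p^k([0,1]^n)$. Second, by the approximation results of Section \ref{sec:approximation} (and the Remark on PL manifolds) we may replace the metric by a fine bilipschitz PL approximation and families of flat relative cycles by polyhedral ones, and by Proposition \ref{approximation no concentration} we may assume that all families are continuous, $\delta$-localized and have no concentration of mass. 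Third, the elementary Lusternik--Schnirelmann fact: if $F:X\to\mathcal Z_k(M;\Z_2)$ is a $p$-sweepout and $U\subset M$ is a subdomain with piecewise smooth boundary, then $x\mapsto F(x)\llcorner U$ is a $p$-sweepout of $(U,\partial U)$, since the generator of $H^1$ restricts to the generator. Crude two-sided bounds $c_1p^{(n-k)/n}\le\omega_p^k(M)\le c_2p^{(n-k)/n}$ (the upper one from a family version of the Federer--Fleming deformation (\ref{FFinsmall}), the lower one from a dimension count) guarantee that all $\limsup$'s and $\liminf$'s below are finite and positive.

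The core is a subadditivity estimate obtained by gluing. Let $M$ be decomposed into $N$ domains $U_1,\dots,U_N$ with disjoint interiors and $\theta$-corners, each $(1+\eps)$-bilipschitz to a Euclidean cube, with $\sum_i\Vol(U_i)=\Vol(M)$. Given $p$ and integers $p_1,\dots,p_N$ with $\sum_i p_i\ge p$, choose on each $U_i$ a $p_i$-sweepout by relative cycles of maximal mass $\le\omega_{p_i}^k(U_i)+1$, normalized as above. Apply Conjecture \ref{conj: coarea} on $U_i$ with a small fixed $r=r(\eps)$: it produces a continuous family $F_i':X\to I_k(cl((U_i)_r);\Z_2)$ whose mass exceeds that of the $p_i$-sweepout by at most $\tfrac1r\,\M(F(x))\,p_i^{-1/(n-1)}+C(U_i)\,\M(\partial U_i)^{k/(n-1)}p_i^{(n-k-1)/(n-1)}$, and whose boundary $\partial F_i'$ is a continuous family in the closed $(n-1)$-manifold $\partial(U_i)_r$ with $\M(\partial F_i'(x))\le c(\tfrac1r\M(F(x))+p_i^{(n-k)/(n-1)})$. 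Over $\Z_2$, set $\Sigma(x)=\sum_i F_i'(x)$; this is a continuous family of $k$-chains in $M$ whose boundary lives in the thin wall region $W=\bigcup_i\partial(U_i)_r$, which up to a deformation retraction and a triangulation is a finite union of $(n-1)$-dimensional domains in $\R^{n-1}$ with $\theta$-corners. The family $\partial\Sigma$ is null-homotopic on $W$ (for cube pieces the relevant homology of the walls vanishes in the range $1\le k<n-1$, and in any case it is assembled cell by cell from the coarea filling data), so Conjecture \ref{conj: isoperimetric} on $W$ --- in dimension $n-1$, applied to $(k-1)$-cycles --- yields a continuous $H:X\to I_k(W;\Z_2)$ with $\partial H(x)=\partial\Sigma(x)$ and $\M(H(x))\le c(p^{(n-k-1)/(n-1)}+\M(\partial\Sigma(x))p^{-1/(n-1)})$. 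Then $\widehat\Sigma(x)=\Sigma(x)-H(x)$ is a continuous family of $k$-cycles in $M$; a Künneth/cup-product count shows it is a $p$-sweepout, and its maximal mass is at most $\sum_i\omega_{p_i}^k(U_i)+\mathrm{Err}(p,N)$, where every term of $\mathrm{Err}$ is $O_N(p^{\beta})$ with $\beta\in\{\,\tfrac{n-k-1}{n-1},\ \tfrac{n-k}{n}-\tfrac1{n-1}\,\}$, and both exponents are strictly less than $\tfrac{n-k}{n}$ --- the inequality $\tfrac{n-k-1}{n-1}<\tfrac{n-k}{n}$ being exactly $0<k$. Hence for fixed $N$ we get $\mathrm{Err}(p,N)=o(p^{(n-k)/n})$.

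With this in hand the Weyl law follows by optimization and Fekete. Taking $M=[0,1]^n$ and $N=m^n$ congruent sub-cubes with all $p_i=\lceil p/m^n\rceil$ gives $\omega_p^k(I^n)\le m^{n-k}\omega_{\lceil p/m^n\rceil}^k(I^n)+o_p(p^{(n-k)/n})$; dividing by $p^{(n-k)/n}$ and using $m^{n-k}=m^{n\cdot(n-k)/n}$ shows the ratio is asymptotically non-increasing along geometric progressions of $p$, so (with the crude bounds) a Fekete-type argument gives that $a(n,k):=\lim_{p\to\infty}\omega_p^k(I^n)/p^{(n-k)/n}$ exists and $\omega_p^k(I^n)=a(n,k)p^{(n-k)/n}(1+o(1))$. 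For a general $M$, apply the subadditivity estimate with $p_i=\lceil p\,\Vol(U_i)/\Vol(M)\rceil$: then, by the cube asymptotics and scaling, $\sum_i\omega_{p_i}^k(U_i)=a(n,k)p^{(n-k)/n}\Vol(M)^{-(n-k)/n}\sum_i\Vol(U_i)^{k/n+(n-k)/n}(1+o(1))=a(n,k)\Vol(M)^{k/n}p^{(n-k)/n}(1+o(1))$, using $\tfrac kn+\tfrac{n-k}n=1$ (this proportion is also the Lagrange minimizer of the right-hand side under $\sum_i p_i=p$). Letting $p\to\infty$ and then $\eps\to0$, $N\to\infty$ gives $\limsup_p\omega_p^k(M)/p^{(n-k)/n}\le a(n,k)\Vol(M)^{k/n}$. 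For the reverse inequality --- which does not use the two conjectures --- one runs the Lusternik--Schnirelmann ``superadditivity'' argument of \cite{LMN} together with the approximation results of Section \ref{sec:approximation}: a $p$-sweepout of $M$ restricts to the $U_i$ so that $\sup_x\Vol_k(F(x))\ge\sum_i\omega_{p_i}^k(U_i)-o(p^{(n-k)/n})$ for suitable $p_i$ with $\sum_i p_i\ge p-c(n)N$, and the same Lagrange computation yields $\liminf_p\omega_p^k(M)/p^{(n-k)/n}\ge a(n,k)\Vol(M)^{k/n}$. Combining the two bounds gives the Weyl law; the case $\partial M\neq\emptyset$ is identical, the boundary pieces being domains with $\theta$-corners.

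The main obstacle is the gluing step: assembling the $N$ independently chosen relative sweepouts of the $U_i$ into a single family of honest cycles in $M$ without spoiling the mass asymptotics. This is precisely the role of the two parametric inequalities --- Conjecture \ref{conj: coarea} detaches each family from $\partial U_i$ into the interior $(U_i)_r$ while leaving a continuous boundary family of much smaller mass, and Conjecture \ref{conj: isoperimetric}, applied one dimension down inside the wall complex $W$, fills those boundary families by chains of negligible mass. The delicate points are: verifying the hypotheses under which the conjectures apply at each stage ($\delta$-localization and no concentration of mass, contractibility of the boundary family on $W$, the $\theta$-corner condition for the $U_i$ and for the faces of $W$); keeping the book-keeping of error terms uniform over the cells of the parameter complex; and checking that every exponent produced is strictly below $(n-k)/n$, which is exactly where the standing hypothesis $k\ge1$ enters.
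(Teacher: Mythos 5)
Your plan diverges from the paper's in a way that introduces a genuine gap. The paper does \emph{not} try to derive the constant $a(n,k)$ by a subadditivity-plus-Fekete argument starting from independent sweepouts of the pieces. Instead it imports the Euclidean Weyl law (and the value $a(n,k)$, together with the matching lower bound $a_1 \ge a(n,k)$) wholesale from \cite{LMN}, builds a single Euclidean domain $V$ by joining the $(1+\eps)$-bilipschitz Euclidean images $\tilde U_i$ with thin tubes, takes a \emph{single} $p$-sweepout $\Phi:X\to\mathcal Z_k(V;\Z_2)$, defines $\Phi_i$ as the restriction of that one sweepout to $U_i$, and then inductively glues the restricted families one piece at a time along the interface $S=\partial V_{m-1}\cap\partial U_m$. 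The only thing that needs to be proved is the upper bound $a_2\le a(n,k)$.

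The reason the paper insists on restricting a single sweepout is exactly where your construction breaks. When you choose independent $p_i$-sweepouts $F_i: X_i\to\mathcal Z_k(U_i,\partial U_i;\Z_2)$ and form $\Sigma(x_1,\dots,x_N)=\sum_i F_i'(x_i)$ on the product $X_1\times\dots\times X_N$, the boundary family $\partial\Sigma$ restricted to a common wall $S$ is, up to the coarea perturbation, $\partial F_i'(x_i)\llcorner S + \partial F_j'(x_j)\llcorner S$ for the two pieces $U_i,U_j$ adjoining $S$. By the "moreover" clause of Conjecture \ref{conj: coarea} each summand is individually a sweepout of $S$, and they live in \emph{different} K\"unneth factors; so the pullback of the generator of $H^{n-k}(\mathcal Z_{k-1}(S,\partial S;\Z_2);\Z_2)$ under the sum is $\pi_i^*(\cdot)+\pi_j^*(\cdot)\ne 0$. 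In other words, the boundary family on the wall is \emph{not} contractible, so the hypotheses of Conjecture \ref{conj: isoperimetric} are violated and there is no family $H$ to subtract. Your remark that "the relevant homology of the walls vanishes in the range $1\le k<n-1$" is true for the absolute cycle space $\mathcal Z_{k-1}$ of a contractible wall, but the isoperimetric conjecture must be applied to the \emph{relative} cycle space $\mathcal Z_{k-1}(S,\partial S;\Z_2)\simeq K(\Z_2,n-k)$, for which contractibility is a nontrivial condition. The same non-contractibility is what makes it impossible to verify that $\widehat\Sigma$ is a $p$-sweepout by a "K\"unneth/cup-product count": if $\widehat\Sigma$ existed as you describe, then restricting to a small ball $B_i\subset \mathrm{int}\,U_i$ would force $\widehat\Sigma^*(\alpha)=\pi_i^*(F_i^*(\alpha_{U_i}))$ simultaneously for every $i$, which is impossible in $H^{n-k}$ of a product when $N\ge 2$. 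The paper escapes both problems at once because $\Phi_m$ and $F_{m-1}$ are restrictions of the \emph{same} family over the \emph{same} $X$, so the two boundary traces on $S$ are homotopic sweepouts of $S$ from the same parameter space and their mod-$2$ sum is contractible; the resulting $F_m$ is a $p$-sweepout simply because its restriction to a small ball in $U_m$ coincides with the restriction of $\Phi$.

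Two smaller points. The generator $\alpha$ lives in $H^{n-k}(\mathcal Z_k;\Z_2)$, not in $H^1$, so the "elementary LS fact" needs to be stated for $H^{n-k}$; this does not affect the logic, only the bookkeeping of parameter-space dimension. And the lower bound $\liminf_p\omega_p^k/p^{(n-k)/n}\ge a(n,k)\Vol(M)^{k/n}$ does not need to be re-proved via your superadditivity sketch: the paper quotes it directly from \cite[Theorem 4.1]{LMN}. The Fekete step, which you introduce to reconstruct $a(n,k)$, is therefore unnecessary for the theorem, and in any case it rests on the flawed subadditivity and cannot be run as written. To repair your argument you would have to abandon independent sweepouts and product parameter spaces and instead build the family from restrictions of a single sweepout of a connected Euclidean model and glue one piece at a time, which is precisely what the paper does.
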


Combined with Theorems \ref{coarea} and \ref{isoperimetric} we obtain

\begin{corollary} \label{weyl}
Weyl law holds for 1-cycles in 3-manifolds:

For every compact Riemannian 3-manifold $M$ (possibly with boundary)
$$\lim_{p \rightarrow \infty} \frac{\omega_p^1(M)}{p^{\frac{2}{3}}}= a(3,1) \Vol(M)^{\frac{1}{n}}$$
\end{corollary}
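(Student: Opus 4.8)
The plan is to adapt the cut-and-paste argument of \cite{LMN} for $(n-1)$-cycles, inserting the two parametric conjectures at exactly the place where \cite{LMN} used that in codimension one the interior faces of a fine subdivision have negligible volume. First I would record the scaling identity $\omega_p^k(\lambda\Om)=\lambda^k\omega_p^k(\Om)$ and the fact that a $(1+\eps)$-bilipschitz change of domain or of the ambient metric alters every $\omega_p^k$ by a factor tending to $1$ as $\eps\to0$; these reduce the problem to Euclidean cubes (and, along $\partial M$, to cubes with $\theta$-corners, which are admitted by the machinery of Section \ref{sec:approximation} onward). One sets $a(n,k):=\lim_{p\to\infty}\omega_p^k([0,1]^n,\partial[0,1]^n;\Z_2)\,p^{-(n-k)/n}$, and the proof splits into (i) existence of this limit and (ii) $\omega_p^k(M)p^{-(n-k)/n}\to a(n,k)\Vol(M)^{k/n}$ for general compact $M$. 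The exponent identity $\tfrac kn+\tfrac{n-k}{n}=1$ is what makes the contributions of the pieces of a subdivision reassemble to $\Vol(M)^{k/n}p^{(n-k)/n}$, so that the number of pieces cancels out.

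\emph{Upper bound (subadditivity and gluing).} Given $\eps>0$, subdivide $M$ into finitely many domains $U_1,\dots,U_N$, each $(1+\eps)$-bilipschitz to a cube (a cube with $\theta$-corners near $\partial M$), with $\sum_i\Vol(U_i)\le(1+\eps)\Vol(M)$, and choose $p_i\approx p\Vol(U_i)/\Vol(M)$ with $\sum_i p_i$ close to $p$. On each $U_i$ pick a $p_i$-sweepout by relative cycles of maximal mass close to $\omega_{p_i}^k(U_i)$; the formal union is a continuous family $\{\Sigma_x\}$ of $k$-chains in $M$ with $\partial\Sigma_x\subset\bigcup_i\partial U_i$, not a family of cycles. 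To repair this I would, near every interior face, apply the parametric coarea inequality (Conjecture \ref{conj: coarea} for $k$-cycles in $n$-domains, with collar width $r=r(p)\to0$) to replace the family by a nearby one whose boundary on $\bigcup_i\partial U_{i,r}$ is a \emph{continuous} $\delta$-localized family of $(k-1)$-cycles of mass $\lesssim\M(\Sigma_x)/r+p^{(n-k)/(n-1)}$, its own mass changing only by $o(p^{(n-k)/n})$ once $r$ is chosen appropriately (using $\tfrac{n-k-1}{n-1}<\tfrac{n-k}{n}$), and then apply the parametric isoperimetric inequality (Conjecture \ref{conj: isoperimetric} for $(k-1)$-cycles in the $(n-1)$-dimensional faces, subdivided into simply connected cells) to cap off that boundary family by a continuous family of $k$-chains inside $\bigcup_i\partial U_{i,r}$ of mass $\lesssim p^{(n-k-1)/(n-1)}+(\M(\Sigma_x)/r)p^{-1/(n-1)}=o(p^{(n-k)/n})$. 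Adding the caps produces a genuine $p$-sweepout of $M$, assembled as in \cite{LMN} so that its maximal mass is $\le\sum_i\omega_{p_i}^k(U_i)+o(p^{(n-k)/n})$; since $\omega_{p_i}^k(U_i)=(1+O(\eps))a(n,k)\Vol(U_i)^{k/n}p_i^{(n-k)/n}(1+o(1))$, the exponent identity gives $\sum_i\omega_{p_i}^k(U_i)=(1+O(\eps))a(n,k)\Vol(M)^{k/n}p^{(n-k)/n}(1+o(1))$, whence $\limsup_p\omega_p^k(M)p^{-(n-k)/n}\le a(n,k)\Vol(M)^{k/n}$ after $\eps\to0$. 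Taking $M$ a large cube, the same construction yields $\omega_p^k([0,m]^n)\le m^{n-k}\omega_{p/m^n}^k([0,1]^n)+o(p^{(n-k)/n})$ (up to rounding).

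\emph{Lower bound (superadditivity).} Pick disjoint $U_1,\dots,U_N\subset M$, each $(1+\eps)$-bilipschitz to a cube, with $\sum_i\Vol(U_i)\ge(1-\eps)\Vol(M)$ and $p_i\approx p\Vol(U_i)/\Vol(M)$, $\sum_i p_i\le p$. For any $p$-sweepout $F\colon X\to\mathcal{Z}_k(M;\Z_2)$ the maps $x\mapsto F(x)\llcorner U_i$ are continuous families of relative cycles, and the restriction homomorphisms $\mathcal{Z}_k(M;\Z_2)\to\mathcal{Z}_k(U_i,\partial U_i;\Z_2)$ pull the Almgren classes back compatibly with cup products; the Lusternik--Schnirelmann inequality of \cite{LMN} (cf. \cite{Gu1}) then gives $\sup_x\M(F(x))\ge\sum_i\omega_{p_i}^k(U_i,\partial U_i;\Z_2)$. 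With $\omega_{p_i}^k(U_i)\ge(1-O(\eps))a(n,k)\Vol(U_i)^{k/n}p_i^{(n-k)/n}(1+o(1))$ and the exponent identity, this yields $\liminf_p\omega_p^k(M)p^{-(n-k)/n}\ge a(n,k)\Vol(M)^{k/n}$ after $\eps\to0$; with $M=[0,m]^n$ and the $U_i$ the unit subcubes one gets $\omega_p^k([0,m]^n)\ge m^{n-k}\omega_{p/m^n}^k([0,1]^n)$. Combining the two cube inequalities, a standard Fekete-type subadditivity argument shows $\lim_p\omega_p^k([0,1]^n)p^{-(n-k)/n}$ exists, so $a(n,k)$ is well defined and Theorem \ref{weyl general} follows. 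Corollary \ref{weyl} is then immediate, since Theorems \ref{coarea} and \ref{isoperimetric} establish Conjectures \ref{conj: coarea} and \ref{conj: isoperimetric} for $(n,k)=(3,1)$ and $(n,k)=(2,0)$, which are exactly what the theorem needs.

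\emph{Main obstacle.} I expect the crux to be the gluing in the upper bound. Beyond running the parametric-coarea surgery consistently near every interior face (a face is shared by two pieces, whose collar modifications must agree), keeping the family $\delta$-localized so that Theorem \ref{approximation} and Proposition \ref{discrete to continuous} can interpolate discrete families to continuous ones, and absorbing the $p$-dependence of the interpolation constants into the discretization scale, the delicate point is to arrange that the family of $(k-1)$-cycles one must cap off on each face is contractible in the appropriate space of relative cycles of that face, so that the contractibility hypothesis of Conjecture \ref{conj: isoperimetric} is actually available; this forces a careful matching of the boundary families produced by the two adjacent pieces and is where the real work lies. The accompanying mass bookkeeping --- showing that the accumulated coarea and isoperimetric errors over the (fixed) number of faces are $o(p^{(n-k)/n})$ while $p_i\approx p/N\to\infty$ --- then dictates the joint choice of the collar width $r=r(p)$, the number of pieces $N=N(\eps)$, and the subdivision scale of the faces.
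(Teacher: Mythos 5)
Your overall strategy — triangulate, use the Euclidean Weyl law on each piece, glue with the parametric coarea and isoperimetric inequalities — is the same as the paper's, and your lower bound and exponent bookkeeping match the cited result of \cite{LMN}. But there are two substantive differences, one of which is a genuine gap.

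First, the assembly scheme differs. You build $p_i$-sweepouts of each $U_i$ separately (with $p_i\approx p\Vol(U_i)/\Vol(M)$) on different parameter spaces and take their "formal union," which forces you to re-justify that the assembled family is a $p$-sweepout of $M$ and to choose the $p_i$ by hand. The paper instead maps the pieces $U_i$ to far-apart copies $\tilde U_i\subset\R^n$, joins them by thin tubes into a single connected domain $V$, takes one $p$-sweepout $\Phi\colon X^p\to\mathcal Z_k(V;\Z_2)$ realizing the Euclidean Weyl law for $V$, and then restricts $\Phi$ to each $\tilde U_i$. This uses a single parameter space throughout, automatically distributes mass optimally among the pieces (no choice of $p_i$ needed), and the sweepout property of the glued family is immediate by restricting to a small ball in one piece. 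Your route can be made to work, but it is more awkward and you do not carry out the argument that the product family is a $p$-sweepout.

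Second — and this is the real gap — you explicitly flag that to apply Conjecture \ref{conj: isoperimetric} one must know that the family of $(k-1)$-cycles on each shared face $S$ is \emph{contractible}, and you call this "where the real work lies," but you never say how to obtain it. This is precisely the missing key idea. The paper's resolution: apply Conjecture \ref{conj: coarea} to \emph{both} adjacent pieces $F_{m-1}$ and $\Phi_m$ at a common collar width, obtaining on $S$ two continuous families of $(k-1)$-cycles, $\{\partial\overline F_{m-1}(x)\llcorner S\}$ and $\{\partial\overline\Phi_m(x)\llcorner S\}$. Conjecture \ref{conj: coarea} is stated so that each of these is itself a $p$-sweepout of $S$; since any two $p$-sweepouts of $S$ pull back the same nontrivial class, their sum $f_S=\big(\partial\overline\Phi_m+\partial\overline F_{m-1}\big)\llcorner S$ is a contractible family of relative $(k-1)$-cycles on $S$, which is exactly the hypothesis of Conjecture \ref{conj: isoperimetric}. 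Without noticing that the coarea conjecture produces boundary families that are $p$-sweepouts of the face — and that this is what forces the sum to be contractible — the isoperimetric step cannot be invoked and the gluing does not close. A lesser point: you propose to re-derive the Weyl law for cubes via a Fekete-type argument, whereas the paper simply cites the Euclidean case of \cite{LMN}; your sketch of that re-derivation is not enough to stand on its own.
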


\subsection{Proof of Theorem \ref{weyl general}}
In \cite{LMN} it was proved that for any compact contractible domain
$U \subset \R^n$
we have
$$\lim_{p \rightarrow \infty} \frac{\omega_p^k(U)}{p^{\frac{n-k}{n}}}= a(n,k)Vol(U)^\frac{k}{n}$$
where $a(n,k)$ is a constant that depends only on $n$ and $k$.

Let $M$ be a compact $n$-manifold and consider sequence
$\{\omega_p^k\}$ of $k$-dimensional $p$-widths of $M$.
It is known that for some constants $a_1(n)\leq a_2(n)$ we have
$$0<a_1 \Vol(M)^{\frac{k}{n}} =\liminf_{p \rightarrow \infty} \frac{\omega_p^k}{p^{\frac{n-k}{n}}}
\leq \limsup_{p \rightarrow \infty} \frac{\omega_p^k}{p^{\frac{n-k}{n}}}=
a_2 \Vol(M)^{\frac{k}{n}} < \infty$$
We will show that $a_2 \leq a(n,k) \leq a_1$ which implies Theorem \ref{weyl general}.

The inequality $a_1 \geq a(n,k)$ was proved in \cite[Theorem 4.1]{LMN}. 
It remains to prove $a_2 \leq a(n,k)$.

Fix $\eps>0$. 
Fix a triangulation $T$ of $M$, so that each $n$-dimensional simplex $U_i$,
$i=1,...,N$,
in the triangulation is 
%As in the proof of Theorem 4.2 in \cite{LMN} we cover $M$ by $N$ contractible open sets $\{ U_i \}$ with piecewise smooth boundaries, 
%such that each $U_i$ is 
$(1+\eps)$-bilipschitz homeomorphic
to a domain with $\theta$-corners in $\R^n$, for some fixed $ \theta \in (0, \frac{\pi}{2})$. 

Let $\tilde{U_i} \subset \R^n$ denote the images of $U_i$
under $(1+\eps)$-bilipschitz homeomorphism and assume that they lie at a distance
greater than $1$ from each other. 
%Choose $\eps'>0$, so that 
%$\sum Vol(N_{\eps'}(\tilde{U_i}))< (1+2\eps)^{n} Vol(M)$.
%We connect sets $N_{\eps'}(\tilde{U_i})$ by tubes of very small volume to obtain
%a connected set $V \subset \R^n$.
We connect sets $\tilde{U_i}$ by tubes of very small volume to obtain
a connected set $V \subset \R^n$.
By Weyl law for domains in $\R^n$ for all sufficiently large $p$
there exists a $p$-sweepout $\Phi:X^p \rightarrow \mathcal{Z}_k(V;\Z_2)$ of $V$ by $k$-cycles of mass
bounded by $a(n,k) ((1+2 \eps)Vol(M))^\frac{k}{n}p^{\frac{n-k}{n}}$.
Let $\tilde{\Phi}_i: X \rightarrow \mathcal{Z}_k(\tilde{U}_i;\Z_2)$ denote the restriction of $\Phi$ to $\tilde{U}_i$ and let 
$\Phi_i: X \rightarrow \mathcal{Z}_k(U_i;\Z_2)$ denote the map obtained by 
composing with bilipschitz 
homeomorphism from $\tilde{U_i}$ to $U_i$.

Given $U \subset M$ with $\theta$-corners and a sufficiently small $\eta> 0$,
we can define a $(1+ c(U)\eta )$-bilipschitz 
homeomorhpism $\Delta^U_{\eta}: U_{\eta} \rightarrow U$.
(Recall that $U_\eta$ is the region obtained by removing a small neighbourhood
of $\partial U$ from $U$, $U_\eta= U\setminus E([0,\eta] \times \partial U)$,
where $E$ is as in Lemma \ref{lem: boundary with corners}).
Choose a sequence of small positive numbers $\eta_1 >...>\eta_{N-1} >0$, so that 
the maps $\Delta^{V_m}_{\eta_{m},0}: (V_m)_{\eta_i} \rightarrow V_m $
are well-defined, where $V_m = \bigcup_{j=1}^m U_j$.
Since $p$ can be chosen arbitrarily large,
we assume that $p^{-\frac{1}{n-1}}<\eta_{N-1}$.

We will define a sequence of maps
$F_m:X \rightarrow \mathcal{Z}_k(V_m;\Z_2)$, satisfying
\begin{align} \label{eq: weyl inductive step}
        \M(F_{m}(x)) & \leq  (1+c_m\eta_1)\sum_{i=1}^{m} \M(\Phi_i(x))  \\
                    & + c_{m} p^{\frac{n-k-1}{n-1}} + 
                    \frac{c_m}{\eta_{m-1}} \sum_{i=1}^{m} \M(\Phi_i(x)) p^{-\frac{1}{n-1}} \nonumber
\end{align}
(set $\eta_0 =1$).

For $m=1$ we set $F_1=\Phi_1$. Assume by induction that we defined 
$F_{m-1}$ satisfying the mass bound (\ref{eq: weyl inductive step}).
If $V_{m-1}$ and $U_m$ have disjoint boundaries,
then we set $F_m = F_{m-1} + \Phi_m$.

Otherwise, let $S = \partial V_{m-1} \cap \partial U_m$.

We apply the coarea
inequality Conjecture \ref{conj: coarea} to maps $F_{m-1}$ and $\Phi_m$ to obtain
$\delta$-localized maps $\Phi_m': X \rightarrow \mathcal{Z}_k((U_m)_{\eta_{m-1}};\Z_2)$
and $F_{m-1}': X \rightarrow \mathcal{Z}_k((V_{m-1})_{\eta_{m-1}};\Z_2)$,
such that, assuming that $p$ is sufficiently large,
\begin{align*}
%\mathcal{F}(\Phi_m'(x) \llcorner int((U_m)_{\eta_m}),\Phi_m(x) \llcorner int((U_m)_{\eta_m})) < \delta \\
%\mathcal{F}(\Phi_m'(x) \llcorner int((U_m)_{\eta_m}),\Phi_m(x) \llcorner int((U_m)_{\eta_m})) < \delta \\
\M(\Phi_m'(x)) & \leq \M(\Phi_m(x))+ c(U_m) p^{\frac{n-k-1}{n-1}}
    +c(U_m) \frac{\M(\Phi_m(x))}{\eta_{m-1}}p^{-\frac{1}{n-1}}\\
 \M(F_{m-1}'(x)) & \leq \M(F_{m-1}(x) )+ c(V_{m-1}) p^{\frac{n-k-1}{n-1}}
    +c(V_{m-1}) \frac{\M(F_{m-1}(x))}{\eta_{m-1}}p^{-\frac{1}{n-1}}\\
    & \leq \sum_{i=1}^{m-1} \M(\Phi_i(x))
         + c_{m-1}' p^{\frac{n-k-1}{n-1}} + c_{m-1}' 
        \sum_{i=1}^{m-1} \frac{\M( \Phi_i(x))}{\eta_{m-1}}p^{-\frac{1}{n-1}} \\
\M(\partial \Phi_m'(x)) & \leq c(U_m) \big(\frac{\M(\Phi_m(x))}{\eta_{m-1}}+ p^{\frac{n-k}{n-1}}\big)\\
\M(\partial F_{m-1}'(x)) & \leq c_{m-1}' \big(\sum_{i=1}^{m-1} \frac{\M(\Phi_i(x))}{\eta_{m-1}} +
p^{\frac{n-k}{n-1}}\big)
%    \\
%    \M(\partial \tilde{\Phi}_i(x)) \leq c(U_i)
%    \Big(\frac{\M(\Phi(x) \llcorner U_i)}{\eps'}+ p^{\frac{n-k}{n-1}}\Big)
%p \times 
\end{align*}
Here $c_{m-1}'> c_{m-1}$ is a suitably chosen constant. Note that we used 
$p^{-\frac{1}{n-1}}<\eta_{m-1}$ to simplify the bounds for
mass and boundary mass of $F_{m-1}'(x)$.

Moreover, by Conjecture \ref{conj: coarea} we have that families $\{\partial F_{m-1}'(x) \}$ and $\{\Phi_m'(x) \}$ are $p$-sweepouts of boundaries of the corresponding
regions. Define $\overline{F}_{m-1} = \Delta^{V_{m-1}}_{\eta_{m-1}}(F_{m-1}'(x))$
and $\overline{\Phi}_m(x) = \Delta^{U_{m}}_{\eta_{m-1}}(\Phi_{m}'(x))$.

Observe that since both $\{\overline{F}_{m-1}(x) \llcorner S\}$ and
$\{ \overline{\Phi}_m(x) \llcorner S \}$ are $p$-sweepouts of  $S$
by relative $(k-1)$-cycles,
 the family  $$\{ f_{S}(x) = \Big(\partial \overline{\Phi}_m(x)+ \partial \overline{F}_{m-1}(x) \Big)\llcorner S  \}$$
is a contractible family of relative $(k-1)$-cycles in $S$.

By the isoperimetric inequality Conjecture \ref{conj: isoperimetric}
applied to $f_S$
there exists a family $\Psi_S: X \rightarrow I_k(S;\Z_2)$ with 
$\partial \Psi_S(x) - f_S(x)$ supported in $\partial S$
and satisfying
\begin{gather*}
    \M(\Psi_S(x))\leq c(S)p^{\frac{n-k-1}{n-1}}
    +\big(\M( \partial \overline{F}_{m-1}(x))+ \M(\partial \overline{\Phi}_m(x))\big)p^{-\frac{1}{n-1}}\\
    \leq c_m' \big( p^{\frac{n-k-1}{n-1}}+
    \sum_{i=1}^m \frac{\M(\partial \Phi_i(x))}{\eta_{m-1}} p^{-\frac{1}{n-1}}\big) 
%    \M(\partial \Psi_S(x) - f_S(x))\leq c_{m}'\big(\sum_{i=1}^m \M(\partial \tilde{\Phi}_i(x))
%    +p^{\frac{n-k-1}{n-2}} + p^{\frac{n-k-1}{n-2}}\big)
   % c(S) 
\end{gather*}
Here $c_m'$ is defined in terms of
$c_{m-1}'$ and $c(S)$.

We define 
$$F_{m}(x) = \overline{F}_{m-1}(x) +\overline{\Phi}_m(x)+\Psi_S(x)  $$
Observe that $F_m$ is a continuous family of relative cycles
and a $p$-sweepout of $V_m$. (That fact that this is
a $p$-sweepout follows by restricting family to a small ball
in $U_m$ and observing that in that small ball it is homotopic
to the restriction of $\tilde{\Phi}_m$).
It is straightforward to check that the inductive assumption for the mass bounds
will be satisfied for some sufficiently large
$c_m \geq c_m'$.

For $m=N$ we obtain a $p$-sweepout $F_N$ of $M$ with
\begin{gather*}
    \frac{\M(F_N(x))}{p^{\frac{n-k}{n}}} 
%    \frac{\sum_{i=1}^N\M(\tilde{\Phi}_i(x))+
%    c_N(p^{\frac{n^2-(3+k)n+3}{(n-1)(n-2)}}+p^\frac{n-k-1}{n-k}+\M(\partial \tilde{\Phi}_i(x))p^{-\frac{1}{n-1}} )}{p^{\frac{n-k}{n}}}\\
    \leq (1 +c(M) \eta_1^k) \big(\frac{\M(\Phi(x))}{p^{\frac{n-k}{n}}}
    +c(M) \frac{ \M(\Phi(x)) }{ \eta_{N-1} p^{\frac{n-k}{n}+\frac{1}{n-1}} }\big) \\
%    +c(M) \frac{ p^{\frac{n^2-(3+k)n+3}{(n-1)(n-2)}}+p^\frac{n-k-1}{n-k}} 
%    { p^\frac{n-k}{n} }\\
    \leq a(n,k) (1 +c(M) \eps^{k})(1+ c(M) \eta_{N-1}^k) Vol(M)^\frac{k}{n} +
    c(\eps,\eta_{N-1},M)\big(p^{-\frac{1}{n-1}} \big)
%    p^{-\frac{1}{n-1}+{\frac{n^2-(3+k)n+3}{(n-1)(n-2)}+\frac{n-k-1}{n-k}}-\frac{n-k}{n}}
\end{gather*}
Observe that as $p \rightarrow \infty$ the second term goes to $0$. 
Since $\eta_1$ and $\eps$ can be chosen to be arbitrarily small we conclude that $a_2 \leq a(n,k)$. This finishes the proof
of Theorem \ref{weyl general}.

%\section{Equidistribution of geodesic nets}
%
%\begin{theorem} \label{equidistribution}
%For a generic set of metrics on a closed Riemannian 3-manifold $M$
%there exists a set of stationary geodesic 
%nets that is equidistributed in $M$. Specifically, for every $g$ in the generic set, 
%there exists
%a sequence $\{ \gamma_i: \Gamma_i \rightarrow M \}$ of stationary geodesic nets in $(M,g)$,
%such that for every $C^{\infty}$ function $f: M \rightarrow \R$ we have
%$$\lim_{k \rightarrow \infty} \frac{\sum_{i=1}^k \int_{\Gamma_i} f \circ \gamma_i ds_g}{\sum_{i=1}^k %L(\gamma_i)}= \frac{\int_M f dV_g}{\Vol(M,g)}$$
%\end{theorem}
%
%This follows from Theorem \ref{weyl} and
%\cite{St} by a similar argument to \cite{MNS}.

\bibliography{bib}

\begin{thebibliography}{10}

\bibitem{A2}
F.~Almgren.
\newblock {\em The Theory of Varifolds}.
\newblock Mimeographed notes. Princeton, 1965.

\bibitem{A1}
F.~J. Almgren.
\newblock The homotopy groups of the integral cycle groups.
\newblock {\em Topology}, 1(4):257--299, 1962.

\bibitem{ChMa}
O.~Chodosh and C.~Mantoulidis.
\newblock Minimal surfaces and the allen--cahn equation on 3-manifolds: index,
  multiplicity, and curvature estimates.
\newblock {\em Annals of Mathematics}, 191(1):213--328, 2020.

\bibitem{ChMa2}
O.~Chodosh and C.~Mantoulidis.
\newblock The p-widths of a surface.
\newblock {\em Publications math{\'e}matiques de l'IH{\'E}S}, 137(1):245--342,
  2023.

\bibitem{dey}
A.~Dey.
\newblock A comparison of the almgren--pitts and the allen--cahn min--max
  theory.
\newblock {\em Geometric and Functional Analysis}, 32(5):980--1040, 2022.

\bibitem{federer}
H.~Federer.
\newblock {\em Geometric measure theory}.
\newblock Die Grundlehren der mathematischen Wissenschaften, Band 153.
  Springer-Verlag New York Inc., New York, 1969.

\bibitem{Flem}
W.~H. Fleming.
\newblock Flat chains over a finite coefficient group.
\newblock {\em Transactions of the American mathematical society},
  121(1):160--186, 1966.

\bibitem{GaGu}
P.~Gaspar and M.~A. Guaraco.
\newblock The weyl law for the phase transition spectrum and density of limit
  interfaces.
\newblock {\em Geometric and Functional Analysis}, 29(2):382--410, 2019.

\bibitem{GALio}
P.~Glynn-Adey and Y.~Liokumovich.
\newblock Width, ricci curvature, and minimal hypersurfaces.
\newblock {\em Journal of Differential Geometry}, 105(1):33--54, 2017.

\bibitem{gromov1983filling}
M.~Gromov.
\newblock Filling riemannian manifolds.
\newblock {\em Journal of Differential Geometry}, 18(1):1--147, 1983.

\bibitem{Gr88}
M.~Gromov.
\newblock Dimension, nonlinear spectra and width.
\newblock In {\em Geometric aspects of functional analysis (1986/87)}, volume
  1317 of {\em Lecture Notes in Math.}, pages 132--184. Springer, Berlin, 1988.

\bibitem{Gr03}
M.~Gromov.
\newblock Isoperimetry of waists and concentration of maps.
\newblock {\em Geom. Funct. Anal.}, 13(1):178--215, 2003.

\bibitem{Gr09}
M.~Gromov.
\newblock Singularities, expanders and topology of maps. {I}. {H}omology versus
  volume in the spaces of cycles.
\newblock {\em Geom. Funct. Anal.}, 19(3):743--841, 2009.

\bibitem{Gu1}
L.~Guth.
\newblock The width-volume inequality.
\newblock {\em Geom. Funct. Anal.}, 17(4):1139--1179, 2007.

\bibitem{Gu2}
L.~Guth.
\newblock Minimax problems related to cup powers and {S}teenrod squares.
\newblock {\em Geom. Funct. Anal.}, 18(6):1917--1987, 2009.

\bibitem{IMN}
K.~Irie, F.~C. Marques, and A.~Neves.
\newblock Density of minimal hypersurfaces for generic metrics.
\newblock {\em Ann. of Math. (2)}, 187(3):963--972, 2018.

\bibitem{LiSt}
X.~Li and B.~Staffa.
\newblock On the equidistribution of closed geodesics and geodesic nets.
\newblock {\em Trans. Amer. Math. Soc.}, 376(12):8825--8855, 2023.

\bibitem{li2019}
Y.~Li.
\newblock Existence of infinitely many minimal hypersurfaces in
  higher-dimensional closed manifolds with generic metrics.
\newblock {\em J. Differential Geom}, 124(2):381--395, 2023.

\bibitem{lio2016}
Y.~Liokumovich.
\newblock Families of short cycles on riemannian surfaces.
\newblock {\em Duke Mathematical Journal}, 165(7):1363--1379, 2016.

\bibitem{LLNR}
Y.~Liokumovich, B.~Lishak, A.~Nabutovsky, and R.~Rotman.
\newblock Filling metric spaces.
\newblock {\em Duke Mathematical Journal}, 1(1):1--38, 2022.

\bibitem{LMN}
Y.~Liokumovich, F.~C. Marques, and A.~Neves.
\newblock Weyl law for the volume spectrum.
\newblock {\em Annals of Mathematics}, 187(3):933--961, 2018.

\bibitem{LiokSt}
Y.~Liokumovich and B.~Staffa.
\newblock Generic density of geodesic nets.
\newblock {\em Selecta Mathematica}, 30(1):14, 2024.

\bibitem{MNWil}
F.~C. Marques and A.~Neves.
\newblock Min-max theory and the {W}illmore conjecture.
\newblock {\em Ann. of Math. (2)}, 179(2):683--782, 2014.

\bibitem{MNRicci}
F.~C. Marques and A.~Neves.
\newblock Existence of infinitely many minimal hypersurfaces in positive
  {R}icci curvature.
\newblock {\em Invent. Math.}, 209(2):577--616, 2017.

\bibitem{MN19}
F.~C. Marques and A.~Neves.
\newblock Morse index of multiplicity one min-max minimal hypersurfaces.
\newblock {\em Advances in Mathematics}, 378:107527, 2021.

\bibitem{MNS}
F.~C. Marques, A.~Neves, and A.~Song.
\newblock Equidistribution of minimal hypersurfaces for generic metrics.
\newblock {\em Invent. Math.}, 216(2):421--443, 2019.

\bibitem{mazurowski}
L.~Mazurowski.
\newblock A {W}eyl law for the {$p$}-{L}aplacian.
\newblock {\em J. Funct. Anal.}, 288(3):Paper No. 110734, 28, 2025.

\bibitem{Pitts}
J.~T. Pitts.
\newblock {\em Existence and regularity of minimal surfaces on {R}iemannian
  manifolds}, volume~27 of {\em Mathematical Notes}.
\newblock Princeton University Press, Princeton, N.J.; University of Tokyo
  Press, Tokyo, 1981.

\bibitem{So19}
A.~Song.
\newblock A dichotomy for minimal hypersurfaces in manifolds thick at infinity.
\newblock {\em Ann. Sci. \'Ec. Norm. Sup\'er. (4)}, 56(4):1085--1134, 2023.

\bibitem{So}
A.~Song.
\newblock Existence of infinitely many minimal hypersurfaces in closed
  manifolds.
\newblock {\em Ann. of Math. (2)}, 197(3):859--895, 2023.

\bibitem{SoZh}
A.~Song and X.~Zhou.
\newblock Generic scarring for minimal hypersurfaces along stable
  hypersurfaces.
\newblock {\em Geometric and Functional Analysis}, 31(4):948--980, 2021.

\bibitem{St}
B.~Staffa.
\newblock Bumpy metrics theorem for geodesic nets.
\newblock {\em Journal of Topology and Analysis}, pages 1--32, 2023.

\bibitem{StaCoarea}
B.~Staffa.
\newblock Parametric coarea inequality for 1-cycles.
\newblock {\em preprint}, 2024.
\newblock \url{https://arxiv.org/abs/2410.23195}.

\bibitem{StaWeyl}
B.~Staffa.
\newblock {Weyl} law for 1-cycles.
\newblock {\em preprint}, 2024.
\newblock \url{https://arxiv.org/abs/2410.23192}.

\bibitem{wenger}
S.~Wenger.
\newblock A short proof of gromov's filling inequality.
\newblock {\em Proceedings of the American Mathematical Society},
  136(8):2937--2941, 2008.

\bibitem{Wi66}
R.~E. Williamson, Jr.
\newblock Cobordism of combinatorial manifolds.
\newblock {\em Ann. of Math. (2)}, 83:1--33, 1966.

\bibitem{zh19}
X.~Zhou.
\newblock On the multiplicity one conjecture in min-max theory.
\newblock {\em Ann. of Math. (2)}, 192(3):767--820, 2020.

\end{thebibliography}
\bibliographystyle{abbrv}

\end{document}